\theoremstyle{definition}\newtheorem{teo}{Theorem}[section]
\theoremstyle{definition}\newtheorem{coro}[teo]{Corollary}
\theoremstyle{definition}\newtheorem{defi}[teo]{Definition}
\theoremstyle{definition}\newtheorem{lemma}[teo]{Lemma}
\theoremstyle{definition}\newtheorem{prop}[teo]{Proposition}
\theoremstyle{remark}\newtheorem{rmk}[teo]{Remark} 
\theoremstyle{remark}\newtheorem{strc}[teo]{}
\theoremstyle{remark}\newtheorem{exmp}[teo]{Example}
\theoremstyle{remark}
\newcommand{\pch}{pch(\mathbb{X})}
\newcommand{\chn}{ch(\mathbb{X})}
\newcommand{\C}{\mathcal{C}}
\newcommand{\X}{\mathbb{X}}
\newcommand{\A}{\mathbb{A}}
\newcommand{\Z}{\mathbb{Z}}
\newcommand{\T}{\mathcal{T}}
\newcommand{\F}{\mathcal{F}}
\newcommand{\M}{\mathcal{M}}
\title{Torsion aspects of varieties of simplicial groups}
\author{Guillermo L\'opez Cafaggi}
\date{}
\begin{document}
\maketitle
\thispagestyle{firststyle}

\begin{abstract}
There is a lattice of torsion theories in simplicial groups such that the torsion/torsion-free categories are given by simplicial groups with truncated Moore complex below/above a certain degree. We study the restriction of these torsion theories to certain subcategories of simplicial groups. In particular, we prove that the categories of D.Conduch\'{e}'s 2-crossed modules and Ashley's crossed complexes in groups are semi-abelian and we give some descriptions of their torsion theories. These examples of torsion theories also give rise to new examples of pretorsion theories in the sense of A. Facchini and C. Finocchiaro, as well as examples of torsion torsion-free theories (TTF theories). 
\end{abstract}

\section{Introduction}

Semi-abelian categories \cite{JMT02} extend the validity of classical homological algebras to non-abelian  categories such as groups, Lie algebras, etc. Torsion theories, originally introduced for abelian categories, have been studied in semi-abelian categories (and other more general non-abelian settings) in \cite{BouGrn06} and \cite{CDT06}. However, we can notice that the modern applications of torsion theories in semi-abelian categories are very different from the classical results in categories of modules over rings. These new results are focused mainly on Categorical Galois Theory, factorization systems and homology (see \cite{GrnJan09}, \cite{EveGrn13}, \cite{EveGrn15}, among others). 

On the other side, one classical aspect of torsion theories in the abelian sense that has been overlooked in the semi-abelian case is that the torsion theories of a fixed category constitute a lattice. One reason that this lattice aspect has not been studied in-depth is that examples of torsion theories in semi-abelian categories are hard to find, and even harder to find several torsion theories for the same category. Perhaps, it is useful to recall a basic example. For a category $\X$ and torsion theories $(\mathcal{S}, \mathcal{G})$ and $(\T, \F)$ we say that   $ (\mathcal{S}, \mathcal{G}) \leq (\T, \F)$ if $ \mathcal{S} \leq \T$. So, for example in the category $Ab$ of abelian groups there are  torsion theories $(\mathbb{T}, \mathbb{F})$ and $(\mathbb{T}_p, \mathbb{F}_p)$ where $\mathbb{T}$ is the category of torsion abelian groups and $\mathbb{T}_p$ is the category of abelian groups whose elements have order a power of $p$ a prime. Thus, $(\mathbb{T}_p, \mathbb{F}_p) \leq (\mathbb{T}, \mathbb{F})$ for each prime $p$. 

One of the main interests of semi-abelian categories is the development of non-abelian homological algebra, the generalization of classical results to semi-abelian categories (for example in \cite{EveGrn10}, \cite{EveGrn15}, \cite{EveGrnLin}, \cite{EveLin}, among many others). Interesting results have been found in the category of internal groupoids (in a semi-abelian category), Whitehead's crossed modules, $n$-fold internal groupoids, etc. One first objective of this work, is to prove that some  categories that are mainly used as algebraic models of homotopy types are themselves semi-abelian, and thus expanding the previous list of semi-abelian examples. Namely, we will focus on the categories of D. Conduch\'{e}'s 2-crossed modules and N. Ashley's crossed complexes in groups, in fact we prove that these are Birkhoff subcategories/varieties of simplcial groups (epireflective subcategories closed under quotients). Our second objective is to study the torsion aspects of these new semi-abelian examples. In \cite{Lop22b}, a large family of torsion theories is introduced for simplicial groups. Since 2-crossed modules and crossed complexes are epireflective subcategories of simplicial groups, we can study the torsion theories therein. These restricted torsion theories present new properties not found in the previous examples of simplicial groups.

\subsection{Outline of the text}

Section 2 recalls the basic properties of torsion theories in semi-abelian categories as well as recalling the lattice $\mu(Grp)$ of torsion theories in simplicial groups which are defined in \cite{Lop22b} by truncation on the underlying Moore complexes.

Section 3 studies the lattice of torsion theories in the categories $\mathcal{M}_{n \geq}$ of simplicial groups with truncated Moore complex above degree $n$. It is well-known that $\mathcal{M}_{1 \geq}$ is equivalent to the category $Grpd(Grp)$ of internal groupoids in groups (see for example \cite{Lod82}). The category $Grpd(\X)$ of internal groupoids in $\X$, which is semi-abelian when $\X$ is so, exhibits two examples of torsion theories given by the pairs:
\[ (Ab(\X) , Eq(\X))\quad  \mbox{and} \quad (ConnGrpd(\X), Dis(\X)) \, .\] Where $Ab(\X)$, $Eq(\X)$, $ConnGrpd(\X)$ and $Dis(\X)$ are, respectively, the subcategories of  $Grpd(\X)$ of internal abelian groups, internal equivalence relations, connected groupoids and discrete internal groupoids (these examples were already studied in \cite{BouGrn06}, \cite{EveGrn10}). Here, it is proved that these known examples in internal groupoids are induced by the lattice $\mu(Grp)$ of simplicial groups restricted to the subcategory $\mathcal{M}_{1 \geq}$. 

Also it is a classical result (again from \cite{Lod82}) that the category of internal groupoids in groups is equivalent to the category $\X Mod$ of Whitehead's crossed modules, thus the torsion theories in $Grpd(Grp)$ correspond to the torsion theories in $\X Mod$: 
\[\mbox{(abelian groups, inclusion of normal monomorphisms)}\] and \[ \mbox{(central extensions in groups, groups as discrete crossed modules)}.\]
 Similarly, in \cite{Con84}, D. Conduch\'{e} introduces the category ${}_2\X Mod$ of 2-crossed modules and proves that is equivalent to $\mathcal{M}_{2 \geq}$. Since the categories $\mathcal{M}_{n \geq}$ are semi-abelian, the category ${}_2\X Mod$ is semi-abelian. Torsion theories in $\mathcal{M}_{2 \geq}$ and $\mathcal{M}_{n \geq}$ are studied in Theorems \ref{teom2} and \ref{teomn}.

In section 4, we study pretorsion theories associated to the torsion theories in $\mu(Grp)$. A pretorsion theory $(\T, \F)$, introduced in \cite{FacFin}, is a generalization of a torsion theory. Specially used for non-pointed categories, it relies roughly in replacing the zero object in the definition of a torsion theory for a category $\mathcal{Z}$ of \textit{trivial} objects. Here, we proved that given torsion theories $(\mathcal{S}, \mathcal{G}) \leq (\T, \F)$ we have a pretorsion theory $(\T, \mathcal{G})$ and $\mathcal{Z}= \T \cap \mathcal{G}$ (this was first noticed in \cite{Man15}). Applied to simplicial groups we can find examples of pretorsion theories where the categories of trivial objects consists of either Eilenberg-Mac Lane simplicial groups or simplicial groups with trivial homotopy groups.

In section 5 and 6, the category $Crs(Grp)$ of Ashley's reduced crossed complexes (or crossed complexes in groups) is proved to be a semi-abelian category. An important feature of torsion theories  in reduced crossed modules is that they behave like a weak TTF theory. Introduced in \cite{jans65} for abelian categories, a torsion torsion-free theory (or TTF theory for short) in a abelian category $\X$ is a triplet of full subcategories $(\C, \T, \F)$ of $\X$ such that $(\C, \T)$ and $(\T, \F)$ are torsion theories in $\X$, so $\T$ is called a torsion torsion-free category. The subcategory $Crs(Grp)_{n \geq}=Crs(Grp) \cap \mathcal{M}_{n \geq}$ of $Crs(Grp)$ is not a torsion torsion-free subcategory but is a torsion-free coreflective subcategory, so we introduce the notion of a CTF theory. For abelian categories, we have that TTF theories and CTF theories are the same. On the other hand, $Crs(Grp)$ present examples of pairs of subcategories that satisfy the axioms of a torsion theory only relative to a particular class of objects $\mathcal{E}$ which we will call $\mathcal{E}$-torsion theories, see Theorem \ref{teocrs}.

\section{Torsion theories in simplicial groups with truncated Moore complex}

A category $\X$ is called semi-abelian \cite{JMT02} if it is pointed with binary coproducts, Barr exact and Bourn protomodular. A finitely complete category $\X$ is called regular if it has all coequalizers of kernel pairs and these are pullback stable, furthermore, a regular category is called (Barr)-exact if every internal equivalence relation is the kernel pair of some arrow in $\X$. On the other hand, a pointed category $\X$ is Bourn protomodular (\cite{Bou91}) if and only if the short split five lemma holds in $\X$:

Given a commutative diagram in $\X$
\[\begin{tikzcd}  0\ar[r]& K\ar[r, "ker(f)"]\ar[d, "\alpha"]& X\ar[r, "f"]\ar[d, "\beta"] &Y\ar[r] \ar[d, "\gamma"]  & 0 \\0\ar[r] & K'\ar[r, "ker(f')"'] & X'\ar[r,"f'"'] &Y'\ar[r] &0 \end{tikzcd} \]
with $f$ and $f'$ split epimorphisms, then if $\alpha$ and $\gamma$ are isomorphism then so is 
$\beta$.

\begin{defi} A torsion theory in a semi-abelian category $\X$ is a pair $(\T, \F)$ of full and replete subcategories of $\X$ such that:
\begin{enumerate}
\item[TT1] A morphism $f:T \to F$ with $T$ in $\T$ and $F$ in $\F$ is a zero morphism.
\item[TT2] For any object $X$ in $\X$ there is a short exact sequence:
\begin{equation}\label{sestt}
\begin{tikzcd} 0\ar[r] &T_X\ar[r, "\epsilon_X"] & X \ar[r, "\eta_X"] & F_X \ar[r]& 0\end{tikzcd} 
\end{equation} 
with $T_X$ in $\T$ and $F_X$ in $\F$. 
\end{enumerate}
\end{defi}

In a torsion theory $(\T, \F)$, $\T$ is called the torsion category and its objects are called torsion objects. In a similar way, $\F$ is the torsion-free category. Any subcategory is called torsion, resp. torsion-free, if it is the torsion category, resp. torsion-free, of a torsion theory. The torsion category $\T$ is a normal mono-coreflective subcategory of $\X$, i.e., the inclusion $J:\T \to \X$ has a right adjoint $T: \X \to \T$ and each component of the counit $\epsilon_X: JT(X) \to X$ is a normal monomorphism (a kernel of some arrow in $\X$). Similarly, $\F$ is a normal epi-reflective subcategory of $\X$, i.e., the inclusion $I$ has a left adjoint $F$ and each component $\eta_X:X \to IF(X)$ is a normal epimorphism: 	

\begin{equation}\label{ttadj}
\begin{tikzcd}[column sep=large] \T \ar[r, bend left, "J"]\ar[r, phantom, "\perp"] & \X \ar[r, bend left, "F"]\ar[l, bend left,"T"]\ar[r, phantom, "\perp"] & \F \, .\ar[l, bend left, "I"] \end{tikzcd}
\end{equation}

Then, it is easy to observe that $\T$ is closed under colimts in $\X$ (those that exist in $\X$) and $\F$ is closed under limits in $\X$. Both $\T$ and $\F$ are closed under extensions in $\X$, this means that given a short exact sequence in $\X$:
\[\begin{tikzcd} 0\ar[r] & A\ar[r] & X\ar[r] & B\ar[r]& 0 \end{tikzcd}\]
with $A$ and $B$ in $\T$ (resp. in $\F$) then $X$ is in  $\T$ (resp. in $\F$). A torsion theory $(\T, \F)$ is called \textit{hereditary} if $\T$ is closed under subobjects in $\X$, i.e., given a monomorphism $m:M \to T$ with $T$ in $\T$ then $M$ is also torsion. Conversely, a torsion theory is called \textit{cohereditary} if $\F$ is closed under quotients in $\X$, i.e., given a normal epimorphism $q: F \to Q$ with $F$ in $\F$ then $Q$ is also in $\F$. It is useful to recall the following result. 

\begin{prop}\label{semiab} \cite{Lop22b} Let $(\T, \F)$ be a torsion theory of a semi-abelian category $\X$, if $(\T,\F)$ is hereditary then $\T$ is a semi-abelian category. Accordingly, if $(\T, \F)$ is cohereditary then $\F$ is a semi-abelian category.
\end{prop}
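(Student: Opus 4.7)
The plan is to verify the four defining axioms of a semi-abelian category (pointedness, binary coproducts, Barr-exactness, Bourn protomodularity) for the torsion category $\T$ in the hereditary case; the cohereditary case for $\F$ will follow by dualizing the argument. Throughout, the strategy is to compare constructions in $\T$ with those in $\X$, using both that $\T$ is coreflective (so it inherits colimits from $\X$) and that $\T$ is closed under subobjects (hereditariness).

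The crucial preliminary step is to establish that finite limits in $\T$ agree with those in $\X$. Since the inclusion $J : \T \to \X$ is a left adjoint, it preserves colimits, so $0 \in \T$ and binary coproducts in $\T$ are computed as in $\X$. Hereditariness gives that kernels and equalizers, being monomorphisms into $\T$-objects, remain in $\T$. For binary products, the key fact is that in a semi-abelian category the canonical morphism $T_1 \sqcup T_2 \to T_1 \times T_2$ is a regular (in fact normal) epimorphism. Each composite $T_i \to T_1 \times T_2 \to F(T_1 \times T_2)$ is a morphism from $\T$ to $\F$, hence vanishes by TT1, so the composite $T_1 \sqcup T_2 \to F(T_1 \times T_2)$ vanishes as well; since regular epis are epis, this forces $\eta_{T_1 \times T_2}$ to be zero, so $F(T_1 \times T_2) = 0$ and $T_1 \times T_2 \in \T$. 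With products and kernels agreeing, short exact sequences in $\T$ coincide with short exact sequences in $\X$ whose three terms lie in $\T$.

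With these agreements in place, the remaining axioms transfer from $\X$. For Barr-exactness: an equivalence relation $R \rightrightarrows T$ in $\T$ is also an equivalence relation in $\X$, is effective in $\X$ by assumption, and the coequalizer computed in $\X$ lies in $\T$ by closure under colimits, so $R$ is effective in $\T$. For Bourn protomodularity: any short split five lemma diagram in $\T$ is such a diagram in $\X$, so the conclusion that $\beta$ is an isomorphism holds in $\X$ and hence in $\T$. The main obstacle is precisely the closure under finite products, since without it, products in $\T$ would be given by the coreflection $T(T_1 \times T_2)$, equivalence relations in $\T$ would not translate to equivalence relations in $\X$, and the descent of Barr-exactness would be blocked. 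The cohereditary case for $\F$ is dual and in fact simpler: reflectivity of $\F$ directly gives closure under all $\X$-limits, while cohereditariness supplies closure under quotients; coproducts exist via the left adjoint $F$, and Barr-exactness and protomodularity transfer by the analogous argument.
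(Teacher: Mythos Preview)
The paper does not actually prove this proposition: it is stated with a citation to \cite{Lop22b} and no proof environment follows. So there is no argument in the present paper to compare your proof against; your proposal stands on its own and should be judged for correctness.

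Your argument is essentially sound. The key step---showing that $T_1 \times T_2 \in \T$ via the regular epimorphism $T_1 \sqcup T_2 \to T_1 \times T_2$---is correct: in a pointed regular protomodular category the pair of coprojections $(\langle 1,0\rangle, \langle 0,1\rangle)$ is jointly strongly epimorphic (this is unitality), hence the induced map from the coproduct is a regular epi, and your vanishing argument then forces $F(T_1 \times T_2)=0$. With products, equalizers and pullbacks all computed in $\X$, the transfer of Barr-exactness and the split short five lemma goes through as you describe.

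One small caution: calling the cohereditary case ``dual'' is misleading, since semi-abelianness is not a self-dual notion and the opposite of a semi-abelian category is essentially never semi-abelian. What you actually do in that paragraph is not dualize but rerun the same style of argument with the roles of limits and colimits swapped, which works for a different reason: a torsion-free subcategory is \emph{always} closed under subobjects (if $M \hookrightarrow F$ with $F \in \F$, then $T(M) \hookrightarrow F$ forces $T(M)=0$), so cohereditariness makes $\F$ a Birkhoff subcategory of $\X$, and Birkhoff subcategories of semi-abelian categories are semi-abelian. It would be cleaner to say this explicitly rather than invoke duality.
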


Given two torsion theories $(\T, \F)$ and $(\mathcal{S}, \mathcal{G})$ in $\X$, we have that $\T \subseteq \mathcal{S}$ if and only if $\mathcal{G} \subseteq \F$. This allows one to introduce a partial order in the class of torsion theories in a category $\X$ by $(\T, \F) \leq (\mathcal{S}, \mathcal{G})$ if $\T \subseteq \mathcal{S}$. There is a bottom element and a top element given by the trivial torsion theories $0:= (0, \X)$ and $\X:=(\X,0)$.

We recall the basic definitions of simplicial groups and the Moore normalization, we refer to \cite{May67} for the details.

 The simplicial category $\Delta$ has, as objects, finite ordinals $[n]=\{0,1,\dots n\}$ and as morphisms monotone functions. A simplicial group is a functor $X: \Delta^{op} \to Grp$, we will denote $Simp(Grp)$ for the category of simplicial groups.  A simplicial group $X$ can be equivalently defined by the following data:
 a family of objects $\{X_n\}_{n\in \mathbb{N}}$ in $\X$, the \textit{face morphisms} $d_i: X_n\to X_{n-1}$ and the \textit{degeneracies morphisms} $s_i: X_n \to X_{n+1}$:
\[X= \begin{tikzcd}
\dots X_n  \ar[r, "\vdots", phantom, shift left]
       \ar[r, "d_0"', shift right=8]
        \ar[r, "d_n", shift left=8]  
 & \dots \ar[r, "\vdots", phantom, shift left]
       \ar[r, "d_0"', shift right=8]
        \ar[r, "d_4", shift left=8]
         \ar[l, "s_{n-1}"', shift right=4, shorten=1.5mm ]
       \ar[l,"s_0", shift left=4, shorten=1.5mm ]
 & X_3  \ar[r,"d_0"' ,shift right=12] 
       \ar[r, , "d_1"', shift right=4] 
       \ar[r, "d_2"' ,shift left=4] 
       \ar[r, "d_3",shift left=12] 
       \ar[l, "s_3"', shift right=4, shorten=1.5mm ]
       \ar[l,"s_0", shift left=4, shorten=1.5mm ]
 & X_2 \ar[r, "d_1"']
        \ar[r, "d_2", shift left=8] 
        \ar[r, "d_0"', shift right=8]
        \ar[l, "s_1", shorten=1.5mm]
        \ar[l, "s_2"', shorten=1.5mm,shift right=8] 
        \ar[l, "s_0", shorten=1.5mm, shift left=8] 
 & X_1 \ar[r, "d_1", shift left=4] 
       \ar[r, "d_0"', shift right=4] 
       \ar[l,"s_1"', shift right=4, shorten=1.5mm ] 
       \ar[l,"s_0", shorten=1.5mm  ,shift left=4 ]
 & X_0 \ar[l, "s_0", shorten=1.5mm]
\end{tikzcd}\] 
 satisfying the \textit{simplicial identities}:
\begin{align*}
 d_i d_j &= d_{j-1} d_i   \quad   \mbox{if} \quad i<j  \\ 
 s_i s_j &= s_{j+1} s_i   \quad  \mbox{if} \quad i\leq j \\
 d_i s_j &= 
 \left\{\begin{array}{cc}
  s_{j-1} d_i & \mbox{if} \quad i<j  \\
 1  & \mbox{if} \quad i=j\, \mbox{or}\,\ i=j+1 \\
  s_j d_{i-1} & \mbox{if} \quad i>j+1\, .
 \end{array}\right. 
\end{align*} 
Since $Simp(Grp)$ is a functor category with codomain a semi-abelian category, it is semi-abelian.

\begin{strc} An $n$-truncated simplicial group $X$ is a simplicial group where the objects $X_i$, the face morphisms $d_i$ and degeneracies morphisms $s_i$ are defined up to level $n$ and they satisfy the simplicial identities (those that make sense). Let $Simp_n(Grp)$ be the category of $n$-truncated simplicial groups then there is a truncation functor:
\[tr_n: \begin{tikzcd}  Simp(Grp)\ar[r] &  Simp_n(Grp) \end{tikzcd}\]
which simply forgets everything above degree $n$. For all $n$, the functor $tr_n$ has a left adjoint $sk_n$ called the $n$-skeleton, and a right adjoint $cosk_n$ named the $n$-coskeleton: $sk_n \dashv tr_n \dashv cosk_n$. We will write $Sk_n=sk_ntr_n$ and $Cosk_n=cosk_n tr_n$.

In addition, for $n=0$, the functor $sk_0$ has a left adjoint $\pi_0$ where $\pi_0(X)=coeq(d_0,d_1)$ is the coequalizer of the face morphisms $d_0,d_1:X_1\to X_0$. Moreover, the adjunctions $\pi_0 \dashv sk_0 \dashv tr_0 \dashv cosk_n$ correspond to the functors $\pi_0 \dashv Dis \dashv ()_0 \dashv Ind$. For a group $G$, $Dis(G)$ is the discrete simplicial group of $G$ 
\[\begin{tikzcd} 
Dis(G)=  & \dots\ar[r, shift left=3, "1"]\ar[r, shift right=3, "1"']\ar[r, phantom, "\vdots", shift left] 
& G\ar[r, shift left=3, "1"]\ar[r, shift right=3, "1"']\ar[r, phantom, "\vdots", shift left]  
& G\ar[r, shift left=3, "1"]\ar[r, shift right=3, "1"'] \ar[r, phantom, "\vdots", shift left] 
& G\ar[r, shift left=3, "1"]\ar[r, shift right=3, "1"']
& G\ar[l, "1"] \end{tikzcd}\]
and $Ind(G)$ is the indiscrete simplicial group of $G$
\[\begin{tikzcd} 
Ind(G)=  & \dots\ar[r, shift left=3, "p_0"]\ar[r, shift right=3, "p_4"']\ar[r, phantom, "\vdots", shift left] 
& G^4\ar[r, shift left=3, "p_0"]\ar[r, shift right=3, "p_3"']\ar[r, phantom, "\vdots", shift left]  
& G^3\ar[r, shift left=3, "p_0"]\ar[r, shift right=3, "p_2"'] \ar[r, phantom, "\vdots", shift left] 
& G^2\ar[r, shift left=3, "p_0"]\ar[r, shift right=3, "p_1"']
& G\ar[l, "s_0"] \end{tikzcd}\]
where $G^n$ is the $n$-fold product, $(X)_0=X_0$ and the morphisms $p_i$ are induced by product projections.
\end{strc}

A chain complex $M$ is a family of morphisms $\{\delta_n: M_n \to M_{n-1}\}_{n \in \mathbb{N}}$ such that $\delta_n \delta_{n+1}=0$ for all $n$. A chain complex $M$ is \textit{proper} if for each differential $\delta_n$ the monomorphism $m_n$  of the normal epi/mono factorization $(e_n, m_n)$ of $\delta_n$ is a normal monomorphism:
\[\begin{tikzcd} \dots\ar[r] &  M_{n+1}\ar[r, "\delta_{n+1}"]& M_n\ar[rr, "\delta_n"]\ar[rd, "e_n"', two heads] & &  M_{n-1}\ar[r, "\delta_{n-1}"]& \dots\, . \\ &&& \delta_n(M_n)\ar[ru, "m_n"', hook]&& \end{tikzcd}\]
The category of chain complexes and the subcategory of proper chain complexes in groups will be denoted as $chn(Grp)$ and $pch(Grp)$.  

\begin{defi} (see \cite{May67}) The Moore normalization functor $N: Simp(Grp) \to chn(Grp)$ is defined as follows. Let $X$ be a simplicial group then $N(X)$ is the group chain complex
\[\begin{tikzcd}  \dots\ar[r] & N(X)_n\ar[r, "\delta_n"] & N(X)_{n-1}\ar[r]& \dots\ar[r] & N(X)_1 \ar[r, "\delta_1"] & N(X)_0  \end{tikzcd} \]
 such that $N(X)_0=X_0$ and 
\[N(X)_n = \bigcap_{i=0}^{n-1} ker(d_i: X_n \to X_{x-1})\]
and differentials $\delta_n= d_n \circ \cap_i ker(d_i): N(X)_n \to N(X)_{n-1}$ for $n \geq 1$.  

The Moore chain complex $N(X)$ of a simplicial group is proper. It is known that the functor $N$ preserves finite limits and normal epimorphisms, and also it is also conservative (see, for example, \cite{EveLin}).

We will write $\M_{n\geq}$ for the subcategory of simplicial groups with trivial Moore complex above degree $n$. Similarly, $\M_{\geq n}$ is the subcategory of simplicial groups with trivial Moore complex below degree $n$. 
\end{defi}

In \cite{Lop22b} it is proved that $\M_{n \geq}$ is a torsion-free subcategory of $Simp(Grp)$, and respectively, $\M_{\geq n}$ is a torsion subcategory for all $n$. Indeed, the corresponding torsion theories form a linearly order lattice $\mu(Grp)$ in $Simp(Grp)$. We recall how these are defined and useful properties.

\begin{teo}\label{teolop} \cite{Lop22b} There is a linear order lattice  $\mu(Grp)$ of torsion theories in $Simp(Grp)$:
\begin{multline*}
0  \leq \dots \leq \; \mu_{ n+1\geq}  \;  \leq \;   \mu_{\geq n+1} \;  \leq \; \mu_{n\geq} \;   \leq \;  \mu_{\geq n} \;  \leq \dots \\  \dots \leq \; \mu_{\geq 2} \; \leq  \; \mu_{1\geq} \; \leq \;  \mu_{\geq 1} \;  \leq  \; \mu_{0\geq} \;   \leq Simp(Grp)\, . 
\end{multline*}
where
\begin{enumerate}
\item The torsion theory $\mu_{n \geq}$ is given by the pair $(Ker(Cot_n), \M_{n \geq})$, where $Cot_n:Simp(Grp) \to \M_{n \geq}$ (introduced in \cite{Por93}) is the left adjoint of the inclusion $i:\M_{n \geq} \to Simp(Grp)$  and $Ker(Cot_n)$ is the full subcategory of simplicial groups $X$ such that $Cot_n(X)=0$.
\item The torsion theory $\mu_{\geq n}$ is given by the pair $(\M_{\geq n}, \F_{tr_{n-1}})$, where $\F_{tr_{n-1}}$ is the subcategory of simplicial groups  $X$ with $\eta_X$ monic where $\eta$ is the unit of the adjunction $tr_{n-1} \dashv cosk_{n-1}$. 
\item Let $X$ be a simplicial group and $M$ its Moore complex. For $X$ the associated short exact sequence of $\mu_{n \geq}$  under normalization is the short exact sequence  in chain complexes (written vertically):
\begin{equation}\label{sescok}
\begin{tikzcd}
\hdots \ar[r]
 & M_{n+2} \ar[r, "\delta_{n+2}"] \ar[d, "id"]
 & M_{n+1} \ar[r, "e_{n+1}", two heads] \ar[d, "id"]
  & \delta_{n+1}(X_{n+1}) \ar[r] \ar[d, "m_{n+1}", hook]
   & 0 \ar[r] \ar[d]
    & \hdots
 \\ 
\hdots \ar[r]
 & M_{n+2} \ar[r, "\delta_{n+2}"] \ar[d]
 & M_{n+1} \ar[r, "\delta_{n+1}"] \ar[d]
  & M_{n} \ar[r, "\delta_{n}"] \ar[d, "cok(\delta_{n+1})", two heads] 
   & M_{n-1} \ar[r] \ar[d, "id"]
    & \hdots
\\
\hdots \ar[r]
 & 0 \ar[r]
 & 0 \ar[r] 
  & Cok(\delta_{n+1}) \ar[r]
   & M_{n-1} \ar[r]
    & \hdots
\end{tikzcd}
\end{equation}
\item Let $X$ be a simplicial group and $M$ its Moore complex. For $X$ the associated short exact sequence of $\mu_{ \geq n}$ under normalization is the short exact sequence  in chain complexes (written vertically):
\begin{equation}\label{sesker}
\begin{tikzcd}
\hdots \ar[r]
 & M_{n+1} \ar[r] \ar[d, "id"]
  & Ker(\delta_n) \ar[r] \ar[d, "ker(\delta_n)", hook]
   & 0 \ar[r] \ar[d]
   & 0 \ar[r] \ar[d]
    & \hdots
 \\ 
\hdots \ar[r]
 & M_{n+1}\ar[r, "\delta_{n+1}"] \ar[d]
  & M_n \ar[r, "\delta_n"] \ar[d, "e_n", two heads] 
   & M_{n-1} \ar[r, "\delta_{n-1}"] \ar[d, "id"]
    & M_{n-2} \ar[r] \ar[d, "id"]
    & \hdots
\\
\hdots \ar[r]
 & 0 \ar[r] 
  & \delta(M_n)\ar[r, "m_n"]
   & M_{n-1} \ar[r]
    & M_{n-2} \ar[r] 
    & \hdots
\end{tikzcd}
\end{equation}
\end{enumerate}
\end{teo}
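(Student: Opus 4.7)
The plan is to construct both families of torsion theories from explicit reflectors defined on the Moore complex, then read off the short exact sequences (\ref{sescok}) and (\ref{sesker}) and the lattice ordering by direct degreewise comparison in $pch(Grp)$. The central tool is that $N\colon Simp(Grp)\to pch(Grp)$ is conservative and preserves finite limits and normal epimorphisms, so building adjoints, verifying exactness, and testing nullity of objects can all be done Moore-degreewise and then transferred back to $Simp(Grp)$.

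For $\mu_{n\geq}$, I would first recall Porter's cotruncation $Cot_n$: on Moore complexes it sends $M$ to the chain complex $\cdots\to 0\to Cok(\delta_{n+1})\to M_{n-1}\to\cdots\to M_0$. Verifying it is left adjoint to the inclusion $\M_{n\geq}\hookrightarrow Simp(Grp)$ amounts to the observation that every morphism from $X$ to a simplicial group with Moore complex trivial above $n$ must kill $\delta_{n+1}(X_{n+1})$ in degree $n$. The unit $\eta_X$ is a normal epimorphism, and its kernel $T_X$, computed degreewise in $pch(Grp)$ and lifted by conservativity of $N$, realises the top row of (\ref{sescok}). A direct Moore-complex calculation gives $Cot_n(T_X)=0$, so $T_X\in Ker(Cot_n)$, settling TT2. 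Axiom TT1 is then automatic, since any morphism from $T\in Ker(Cot_n)$ to $F\in\M_{n\geq}$ factors through $Cot_n(T)=0$.

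For $\mu_{\geq n}$ I would dualise using $tr_{n-1}\dashv cosk_{n-1}$. For each $X$ I factor the unit $\eta_X\colon X\to Cosk_{n-1}(X)$ as $X\twoheadrightarrow F_X\hookrightarrow Cosk_{n-1}(X)$. A direct computation shows that the Moore complex of $Cosk_{n-1}(X)$ is concentrated in degrees $\leq n$, so every subobject lies in $\M_{n\geq}$; moreover $Cosk_{n-1}(F_X)=Cosk_{n-1}(X)$ since the coskeleton only sees the $(n-1)$-truncation, hence $\eta_{F_X}$ is monic and $F_X\in\F_{tr_{n-1}}$. The kernel $T_X$ then has Moore complex as in the top row of (\ref{sesker}), vanishing below degree $n$; invoking Moore's semidirect-product decomposition $X_i\cong\prod_{\sigma} N(X)_{|\sigma|}$, vanishing of the Moore complex below $n$ forces $(T_X)_i=0$ for $i<n$, so $tr_{n-1}(T_X)=0$ and $T_X\in\M_{\geq n}$. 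Axiom TT1 follows by adjunction: for $T\in\M_{\geq n}$ and $F\in\F_{tr_{n-1}}$, the composite $T\to F\hookrightarrow Cosk_{n-1}(F)$ corresponds to $tr_{n-1}(T)=0\to tr_{n-1}(F)$, which is zero, and monicness of $F\hookrightarrow Cosk_{n-1}(F)$ forces $T\to F$ to vanish.

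The lattice structure is then bookkeeping on Moore complexes: $Ker(Cot_{n+1})\subseteq\M_{\geq n+1}$ from the definition of $Cot_{n+1}(X)=0$; $\M_{\geq n+1}\subseteq Ker(Cot_n)$ since $M_n=0$ forces $Cok(\delta_{n+1})=0$; $Ker(Cot_n)\subseteq\M_{\geq n}$ directly; and $\M_{n\geq}\subseteq\F_{tr_n}$ because a simplicial group with Moore complex truncated above $n$ embeds in its $n$-coskeleton via its degeneracy structure. The main obstacle I foresee is the computation that $Cosk_{n-1}(X)$ has Moore complex concentrated in degrees $\leq n$, together with the related subtlety that the image factorization of $\eta_X$ lands in $\F_{tr_{n-1}}$ rather than merely in $\M_{n\geq}$. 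Once these are under control, the remaining steps are routine manipulations of Moore complexes and normal epi/mono factorizations in the semi-abelian category $Simp(Grp)$.
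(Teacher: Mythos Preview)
The paper does not supply a proof of this theorem: it is stated as a recall from \cite{Lop22b}, with no argument given in the present text, so there is no in-paper proof against which to compare your proposal. Your outline is a plausible sketch of the argument one would expect in that reference---constructing the reflectors via Porter's cotruncation and the coskeleton unit, transferring exactness and nullity tests through the conservative, limit- and normal-epi-preserving Moore functor $N$, and reading off the chain of torsion-class inclusions degreewise. The one step you rightly flag as delicate is the identification of the Moore complex of $Cosk_{n-1}(X)$ and the verification that the regular image of $\eta_X$ lies in $\F_{tr_{n-1}}$; this requires an explicit computation (or a reference) and is not automatic from the adjunction alone. Beyond that, your bookkeeping on the lattice inclusions is correct and matches the characterisations recorded in Corollary~\ref{cotnsurj}.
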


As shown from diagrams (\ref{sescok}) and (\ref{sesker}) the coreflector of $\M_{\geq n}$ and respectively the reflector of $\M_{n\geq}$ behave, at the level of Moore complexes, as the truncation functors introduced by L. Illusie in \cite{ill71}. As a consequence, torsion/torsion-free objects can be characterized by their Moore complex as follows.   

\begin{coro}\label{cotnsurj} \cite{Lop22b} Let $X$ be a simplicial group  with Moore complex $M$, then
\begin{enumerate} 
\item  $X$ belongs to $Ker(Cot_n)$ if and only if $M$ is of the form
\[\begin{tikzcd}M=\dots\ar[r]& M_{n+2}\ar[r]& M_{n+1}\ar[r, "\delta_n", two heads] & M_n\ar[r]& 0\ar[r]& 0\ar[r]& \dots \end{tikzcd}\]
with $\delta_n$ a normal epimorphism.
\item  $X$ belongs to $\F_{tr_n}$ if and only if $M$ is of the form
\[\begin{tikzcd}M=\dots\ar[r] & 0\ar[r] & 0\ar[r]& M_{n+1}\ar[r, "\delta_{n+1}", hook]& M_n\ar[r]& M_{n-1}\ar[r] &\dots \end{tikzcd}\]
with $\delta_{n+1}$ a normal monomorphism.
\end{enumerate}
\end{coro}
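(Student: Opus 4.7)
The plan is to extract both characterizations directly from the short exact sequences of Theorem~\ref{teolop} after applying the Moore normalization functor $N$, using two facts recalled just before the corollary: $N$ is conservative (so an object of $Simp(Grp)$ is zero if and only if its Moore complex is the zero chain complex), and every Moore complex $N(X)$ is proper (so a monic/epic differential is automatically a normal mono/epi).

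For part (1), note that $X \in Ker(Cot_n)$ is equivalent to the reflection $F_X = Cot_n(X)$ of $X$ into $\M_{n\geq}$ being zero. Normalizing the short exact sequence of $\mu_{n\geq}$ at $X$ yields diagram (\ref{sescok}), whose bottom row is $N(F_X)$:
\[ N(F_X) \;=\; \cdots \longrightarrow 0 \longrightarrow 0 \longrightarrow Cok(\delta_{n+1}) \longrightarrow M_{n-1} \longrightarrow M_{n-2} \longrightarrow \cdots \]
By conservativity of $N$, $F_X = 0$ iff this complex is identically zero, i.e.\ iff $M_k = 0$ for all $k < n$ and $Cok(\delta_{n+1}) = 0$. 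The vanishing of the cokernel amounts to $\delta_{n+1}\colon M_{n+1}\to M_n$ being a normal epimorphism, the ``normal'' part being forced by properness of $N(X)$. The converse direction is just a reading of the same diagram in reverse: if $M$ already has the stated shape, the bottom row collapses to zero, so $F_X = 0$.

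Part (2) is the symmetric argument applied to the torsion theory $\mu_{\geq n+1} = (\M_{\geq n+1}, \F_{tr_n})$. Here $X \in \F_{tr_n}$ iff the torsion coreflection $T_X$ of $X$ into $\M_{\geq n+1}$ vanishes. Using diagram (\ref{sesker}) with $n$ replaced by $n+1$, the top row becomes
\[ N(T_X) \;=\; \cdots \longrightarrow M_{n+2} \longrightarrow Ker(\delta_{n+1}) \longrightarrow 0 \longrightarrow 0 \longrightarrow \cdots \]
so $T_X = 0$ iff $M_k = 0$ for all $k \geq n+2$ and $Ker(\delta_{n+1}) = 0$. In the proper complex $N(X)$ the factorization $\delta_{n+1} = m_{n+1}\circ e_{n+1}$ has $m_{n+1}$ normal mono and $e_{n+1}$ normal epi; if $\delta_{n+1}$ is mono then $e_{n+1}$ is mono, hence a normal epi that is mono, hence an iso, so $\delta_{n+1}\cong m_{n+1}$ is a normal monomorphism. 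The converse is once again a direct reading of (\ref{sesker}).

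The only delicate step is the passage from ``the relevant kernel or cokernel vanishes'' to ``the relevant differential is a \emph{normal} mono or epi''; this is exactly what the properness of Moore complexes of simplicial groups guarantees. Beyond that, the corollary is a straight translation of Theorem~\ref{teolop} through the conservative functor $N$.
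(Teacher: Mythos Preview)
Your proof is correct and follows exactly the route the paper intends: the corollary is stated without proof as an immediate consequence of the normalized short exact sequences (\ref{sescok}) and (\ref{sesker}) of Theorem~\ref{teolop}, and your argument simply spells out that reading via conservativity of $N$ and properness of $N(X)$. Your observation that properness is what upgrades ``$Cok(\delta_{n+1})=0$'' to ``$\delta_{n+1}$ surjective'' (and ``$Ker(\delta_{n+1})=0$'' to ``$\delta_{n+1}$ normal mono'') is precisely the point that makes this a genuine corollary rather than a tautology.
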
 

\section{Torsion subcategories of $\M_{n \geq}$}

The lattice $\mu(Grp)$
\[ 0  \leq \quad  \dots \leq \quad \mu_{\geq 2} \quad \leq  \quad \mu_{1\geq} \quad \leq \quad  \mu_{\geq 1} \quad  \leq  \quad \mu_{0\geq} \quad   \leq Simp(Grp)\, . \]
extends the torsion theories in internal groupoids $Grpd(Grp)$
\[0=(0, Grpd(Grp)) \leq \; (Ab,Eq(Grp) ) \; \leq \; (ConnGrpd(Grp) , Dis(Grp)) \; \leq \; Grpd(Grp)\]
in the sense that the torsion-free categories of the first 3 largest non-trivial torsion theories,   $ \mu_{1 \geq} \leq \mu_{\geq 1} \leq \mu_{0 \geq}$, in $Simp(Grp)$ are the torsion-free categories ($Grpd(Grp)$, $Eq(Grp)$ and $Dis(Grp)$, respectively) of $Grpd(Grp)$.

On the other hand, the torsion categories of $Simp(Grp)$ are not as easily described as in the case of $Grpd(Grp)$. However, some similarities arise when we restrict the lattice $\mu(Grp)$ to subcategories of the form $\M_{n \geq}$ with the case $Grpd(Grp)= \M_{1 \geq}$. To this end, we recall how simplicial groups are equivalent to chain complexes with operations following the work of J. Loday in \cite{Lod82}, D. Conduch\'e in \cite{Con84} and, P. Carrasco and A. M. Cegarra in \cite{CaCe91}.

\subsection{The case of $\M_{1 \geq}$ and crossed modules}
For completeness sake we quickly recall how torsion theories in internal groupoids correspond to torsion theories in equivalent category of Whitehead's crossed modules.

We will write group actions acting on the left ${}^b(a)$ and each group $G$ is consider to act on itself by conjugation as ${}^g(g')=gg'g^{-1}$.

\begin{defi}\label{deficroos} \cite{Wht} A \textit{crossed module} (in groups) is a morphism of groups $\delta: A \to B$ with a groups action  $B \to Aut(A)$ such that:
\begin{enumerate}
\item $\delta({}^b(a))=b\delta(a)b^{-1} $  ($\delta$ is equivariant). 
\item $ {}^{\delta(a)a'}=aa'a^{-1}$  (Peiffer identity).
\end{enumerate}
If $\delta$ only satisfies axiom 1 then it is called a \textit{precrossed module}.  We will denote the category of crossed modules and of precrossed modules as $\X Mod$ and $P \X Mod$, respectively. 
\end{defi} 
 
From \cite{Lod82}, the following categories are equivalent:
\begin{enumerate}
\item The category $\M_{1 \geq}$ of simplicial groups with trivial Moore complex above degree 1.
\item The category $Grpd(Grp)$ of internal groupoids in groups.
\item The category $\X Mod$ of crossed modules in groups.
\item The category of \textit{Cat-1-groups}.
\end{enumerate} 
Indeed, given a simplicial group $X$ with Moore complex $M=N(X)$, the differential $\delta_1: M_1 \to M_0$ is a precrossed module with the action given by conjugation with $s_0$. Furthermore, $\delta_1$ is a crossed module if and only if $M_i=0$ for $i>1$. 

We will write $\mu(\M_{1 \geq})$ for the lattice given by the torsion theories of $\mu(Grp)$ restricted to $\M_{1 \geq}$, i.e., we consider the torsion theory $(\T \cap \M_{1 \geq},\F\cap \M_{1 \geq})$ for each element $(\T, \F)$ of $\mu(Grp)$. We will write $\mu'_{n \geq}$ and $\mu'_{\geq n}$ for the corresponding restrictions of $\mu_{n \geq}$ and $\mu_{\geq n}$.

\begin{prop}\label{latm1} The lattice $\mu(\M_{1\geq})$ is given by
\[0  \quad \leq  \quad \mu'_{\geq 1}  \quad \leq  \quad \mu'_{0 \geq} \quad \leq \quad \M_{1\geq} \, . \]
\end{prop}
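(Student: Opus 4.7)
The plan is to evaluate each torsion theory in the linearly ordered lattice $\mu(Grp)$ of Theorem \ref{teolop} against $\M_{1 \geq}$, using the Moore-complex characterizations of Corollary \ref{cotnsurj}. The key observation is that $\M_{1 \geq}$ consists of those simplicial groups whose Moore complex vanishes in all degrees $\geq 2$, so the intersections with $\M_{\geq n}$, $Ker(Cot_n)$ and $\F_{tr_{n-1}}$ reduce to a mechanical degree count on Moore complexes.

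First I would check that if $(\T, \F)$ is a torsion theory in $\mu(Grp)$ with $(\T, \F) \geq \mu_{1 \geq}$, then the restriction $(\T \cap \M_{1 \geq}, \F)$ really is a torsion theory in $\M_{1 \geq}$. This uses that $\F \subseteq \M_{1 \geq}$ (forced by the ordering) and that $\M_{1 \geq}$ is closed under subobjects in $Simp(Grp)$: since the normalization functor $N$ preserves finite limits, a monomorphism $A \hookrightarrow X$ induces degree-wise monomorphisms $N(A)_i \hookrightarrow N(X)_i$, so $N(X)_i = 0$ for $i \geq 2$ propagates to $N(A)_i = 0$. Hence the short exact sequence (\ref{sestt}) of any $X \in \M_{1 \geq}$ stays inside $\M_{1 \geq}$. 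Symmetrically, any $(\T, \F) \leq \mu_{1 \geq}$ satisfies $\T \subseteq Ker(Cot_1)$, and a direct Moore-complex degree count using Corollary \ref{cotnsurj} gives $Ker(Cot_1) \cap \M_{1 \geq} = 0$; the same argument shows that $\mu_{n \geq}$ for $n \geq 1$ and $\mu_{\geq m}$ for $m \geq 2$ all restrict to the bottom $(0, \M_{1 \geq})$.

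Only $\mu_{\geq 1}$, $\mu_{0 \geq}$ and the top $Simp(Grp)$ remain. For $\mu_{\geq 1} = (\M_{\geq 1}, \F_{tr_0})$ the torsion part $\M_{\geq 1} \cap \M_{1 \geq}$ consists of simplicial groups whose Moore complex lives in degree $1$ only, i.e., crossed modules $\delta : A \to 0$, which via the Peiffer identity are abelian groups; by Corollary \ref{cotnsurj}(2), the torsion-free part $\F_{tr_0}$ is already contained in $\M_{1 \geq}$ and consists of the crossed modules with a normal-monomorphism boundary, namely the internal equivalence relations in $Grp$. For $\mu_{0 \geq} = (Ker(Cot_0), \M_{0 \geq})$ the same degree count gives torsion part equal to the surjective crossed modules and torsion-free part $\M_{0 \geq}$ of discrete crossed modules. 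The top $Simp(Grp)$ restricts to $\M_{1 \geq}$, and strict inequalities between consecutive terms are witnessed by obvious examples (a nonzero abelian group, a surjective non-abelian crossed module, a non-surjective crossed module). The main obstacle is the subobject-closure of $\M_{1 \geq}$ in $Simp(Grp)$, on which the whole restriction argument hinges; the rest is a direct application of Corollary \ref{cotnsurj} and the classical crossed-module dictionary.
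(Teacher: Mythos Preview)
Your argument is correct and follows the same overall idea as the paper: torsion theories at or below $\mu_{1\geq}$ collapse to $(0,\M_{1\geq})$, leaving only $\mu'_{\geq 1}$, $\mu'_{0\geq}$ and the top. The paper's proof, however, is more economical on the collapsing step: rather than computing $Ker(Cot_1)\cap\M_{1\geq}$ via Moore-complex degree counts, it simply invokes that $\M_{1\geq}$ is \emph{itself} the torsion-free class of $\mu_{1\geq}$, so for any $(\T,\F)\leq\mu_{1\geq}$ one has $\T\subseteq Ker(Cot_1)$ and hence $\T\cap\M_{1\geq}\subseteq Ker(Cot_1)\cap\M_{1\geq}=0$ by axiom~TT1. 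Your subobject-closure argument for $\M_{1\geq}$ is fine but is just a special case of the general fact that torsion-free subcategories are closed under subobjects. Finally, your explicit identification of the torsion and torsion-free parts of $\mu'_{\geq 1}$ and $\mu'_{0\geq}$ (abelian groups, equivalence relations, central extensions, discrete crossed modules) is correct but is the content of the \emph{next} proposition in the paper, not part of what Proposition~\ref{latm1} asserts; for the present statement you only need that these restrictions are nontrivial and distinct.
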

\begin{proof} Recall that $\mu(Grp)$ is given by
\begin{equation}\label{mugrpdiag}
\begin{tikzcd}[column sep=scriptsize]
Simp(Grp)= & Simp(Grp) \ar[r, shift left=2.5]\ar[r, phantom, "\perp"] 
  & Simp(Grp) \ar[r, shift left=2.5]\ar[l, shift left=2]\ar[r, phantom, "\perp"] 
   & 0  \ar[l, shift left=2] \ar[d, hook] \\
\mu_{0\geq}= & Ker(Cot_0) \ar[r, shift left=2.5]\ar[r, phantom, "\perp"]\ar[u, hook] 
  & Simp(Grp) \ar[r, shift left=2.5]\ar[l, shift left=2.5]\ar[r, phantom, "\perp"] 
   & \mathcal{M}_{0 \geq} \cong Grp  \ar[l, shift left=2.5] \ar[d, hook] \\
\mu_{\geq 1}= & \mathcal{M}_{\geq 1} \ar[r, shift left=2.5]\ar[r, phantom, "\perp"] \ar[u, hook]
  & Simp(Grp) \ar[r, shift left=2.5]\ar[l, shift left=2.5]\ar[r, phantom, "\perp"] 
   & \F_{tr_0} \cong Eq(Grp)  \ar[l, shift left=2.5] \ar[d, hook]\\
\mu_{1\geq}= & Ker(Cot_1) \ar[r, shift left=2.5]\ar[r, phantom, "\perp"]\ar[u, hook] 
  & Simp(Grp) \ar[r, shift left=2.5]\ar[l, shift left=2.5]\ar[r, phantom, "\perp"] 
   & \mathcal{M}_{1 \geq} \cong Grpd(Grp)  \ar[l, shift left=2.5] \ar[d, hook] \\
\mu_{\geq 2}= & \mathcal{M}_{\geq 2} \ar[r, shift left=2.5]\ar[r, phantom, "\perp"] \ar[u, hook]
  & Simp(Grp) \ar[r, shift left=2.5]\ar[l, shift left=2.5]\ar[r, phantom, "\perp"] 
   & \F_{tr_1} \ar[l, shift left=2.5] \ar[d, hook] \\
\mu_{2\geq}= & Ker(Cot_2) \ar[r, shift left=2.5]\ar[r, phantom, "\perp"]\ar[u, hook] 
  & Simp(Grp) \ar[r, shift left=2.5]\ar[l, shift left=2.5]\ar[r, phantom, "\perp"] 
   & \mathcal{M}_{2 \geq}  \ar[l, shift left=2.5] \\
\dots &  \dots & \dots &  \dots 
\end{tikzcd}
\end{equation}
Since $\M_{1 \geq}$ is itself a torsion-free subcategory comprised in the lattice $\mu(Grp)$, the restriction of each torsion theory below $\mu_{1 \geq}$ is the trivial torsion theory $(0, \M_{1 \geq})$ in $\M_{1 \geq}$.
\end{proof}

It is easy to observe that the torsion-free categories $Eq(Grp)$ and $Dis(Grp)$ in $Simp(Grp)$ are also torsion-free categories of $\M_{1 \geq}$. We can conclude the following.

\begin{prop} The lattice $\mu(\M_{1 \geq})$ corresponds to 
\[0 \quad \leq \quad (Ab(Grp), Eq(Grp)) \quad \leq \quad (ConnGrpd(Grp), Dis(Grp)) \quad \leq \quad \M_{1 \geq}. \]
Moreover, under the Moore normalization this lattice corresponds to the lattice of torsion theories in $\X Mod$
\[0 \quad \leq \quad (Ab, Norm) \quad \leq \quad (CExt, Dis) \quad \leq \quad \X Mod\, ,\]
where 
\begin{enumerate}
\item $Ab$ is the category of crossed modules of the form $A \to 0$ for an abelian group $A$.
\item $Norm$ is the category of crossed modules given by the inclusion of a normal subgroup  $i: N \to G$.
\item $CExt$ is the category of crossed modules given by central extensions, epimorphisms $p: G \to Q$ with a $ker(p) \leq Z(G)$ where $Z(G)$ is the center of $G$.
\item $Dis$ is the category of crossed modules given of the form $0 \to G$ for a group $G$. 
\end{enumerate}
\end{prop}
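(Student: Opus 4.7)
The plan is to invoke Proposition \ref{latm1}, which already determines that $\mu(\M_{1\geq})$ consists of exactly the four elements $0 \leq \mu'_{\geq 1} \leq \mu'_{0\geq} \leq \M_{1\geq}$, and then identify the torsion and torsion-free parts of the two non-trivial theories with the named subcategories. All identifications are made along the Loday equivalences $\M_{1\geq} \simeq Grpd(Grp) \simeq \X Mod$ of \cite{Lod82}, using the Moore complex descriptions of Theorem \ref{teolop} and Corollary \ref{cotnsurj} to translate abstract subcategories of simplicial groups into concrete conditions on crossed modules.

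For $\mu'_{\geq 1}$, the torsion part $\M_{\geq 1} \cap \M_{1\geq}$ consists of those $X$ whose Moore complex is concentrated in degree $1$; under the Loday equivalence such an object corresponds to a crossed module $\delta : A \to 0$, and because $\delta$ is the zero map the Peiffer identity ${}^{\delta(a)}a' = aa'a^{-1}$ degenerates to $a' = aa'a^{-1}$, forcing $A$ abelian. This yields exactly $Ab$. The torsion-free part $\F_{tr_0} \cap \M_{1\geq}$ already equals $\F_{tr_0} \cong Eq(Grp)$, because equivalence relations in groups are one-object internal groupoids and therefore automatically lie in $\M_{1\geq}$; translating a kernel-pair relation on a group to its associated crossed module yields the inclusion of its kernel as a normal subgroup, which is $Norm$.

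For $\mu'_{0\geq}$, Corollary \ref{cotnsurj} characterises $Ker(Cot_0)$ as the simplicial groups whose Moore complex has the form $\cdots \to M_2 \to M_1 \twoheadrightarrow M_0 \to 0$ with the last differential a normal epimorphism; intersecting with $\M_{1\geq}$ kills $M_i$ for $i \geq 2$ and leaves surjective crossed modules $\delta : M_1 \to M_0$. Applying the Peiffer identity to any $a \in \ker \delta$ gives $a' = aa'a^{-1}$ for all $a' \in M_1$, so $\ker \delta$ is central in $M_1$; this is precisely the definition of a central extension, yielding $CExt$. The torsion-free part $\M_{0\geq} \cap \M_{1\geq} = \M_{0\geq} \cong Grp$ is the essential image of $Dis : Grp \to Simp(Grp)$, and under the Loday equivalence the discrete groupoid on $G$ corresponds to the crossed module $0 \to G$, i.e.\ $Dis$. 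The only subtlety in the whole argument is the two applications of the Peiffer identity producing abelianness and centrality; everything else is a direct translation along functors that already preserve the relevant short exact sequences of Theorem \ref{teolop}.
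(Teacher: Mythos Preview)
Your argument is correct and more self-contained than the paper's. The paper's proof proceeds differently: it observes that since $Eq(Grp)$ and $Dis(Grp)$ are already known (from the discussion opening Section~3) to be the torsion-free parts of $\mu'_{\geq 1}$ and $\mu'_{0\geq}$ respectively, and since torsion and torsion-free classes determine one another, the torsion parts must be $Ab(Grp)$ and $ConnGrpd(Grp)$ because those are the torsion partners for these torsion-free classes in the known torsion theories on $Grpd(Grp)$ from \cite{BouGrn06} and \cite{EveGrn10}. The crossed-module translations are then dispatched by citation to \cite{BouGrn06}, \cite{EveGrn10}, \cite{Man15}. Your route, by contrast, computes each of the four subcategories directly from the Moore-complex descriptions in Theorem~\ref{teolop} and Corollary~\ref{cotnsurj}, invoking the Peiffer identity twice to extract abelianness (for $A\to 0$) and centrality of the kernel (for surjective $\delta$). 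This buys independence from the cited external results and makes the identifications $CExt$ and $Ab$ transparent at the level of the crossed-module axioms; the paper's route is shorter but leans on the literature for exactly those computations.
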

\begin{proof} In a torsion theory the torsion and the torsion-free category uniquely determine each other, so if $Eq(Grp)$ and $Dis(Grp)$ are torsion-free categories of $\mu'_{0 \geq}$ and $\mu'_{\geq 1}$ the torsion categories must correspond accordingly to $Ab(Grp)$ and $ConnGprd(Grp)$.

The second statement is well-known, for instance the equivalences are mentioned in \cite{BouGrn06}, \cite{EveGrn10} and  \cite{Man15}. 
\end{proof}

\subsection{The case of $\M_{n \geq}$}

Following the previous case, we will write $\mu(\M_{n \geq})$ for the restriction of $\mu(Grp)$ to $\M_{n \geq}$ and $\mu'_{i \geq}$, $\mu'_{\geq i}$ for the restrictions of the torsion theories $\mu_{ i \geq}$, $\mu_{\geq i}$.

Similar to Proposition \ref{latm1}, also from the diagram (\ref{mugrpdiag}) we have the following result.

\begin{prop}\label{latmn} The lattice $\mu(\M_{n\geq})$ is given by
\[0 \quad \leq \quad  \mu'_{\geq n} \quad \leq \quad  \mu'_{n-1 \geq}  \quad \leq  \quad \mu'_{\geq n-1} \quad \leq \dots \leq  \quad \mu'_{\geq 1}  \quad \leq  \quad \mu'_{0 \geq} \quad \leq \quad \M_{n\geq}. \]
\end{prop}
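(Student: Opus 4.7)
The plan is to mirror the argument of Proposition \ref{latm1}, exploiting the fact that $\M_{n \geq}$ is itself the torsion-free category of $\mu_{n \geq}$ inside $Simp(Grp)$. The torsion theories in $\mu(Grp)$ lying at or above $\mu_{n \geq}$ form the chain
\[\mu_{n \geq} \leq \mu_{\geq n} \leq \mu_{n-1 \geq} \leq \mu_{\geq n-1} \leq \dots \leq \mu_{\geq 1} \leq \mu_{0 \geq} \leq Simp(Grp),\]
and the strategy is to show that each torsion theory strictly above $\mu_{n \geq}$ restricts to a proper nontrivial torsion theory of $\M_{n \geq}$, while $\mu_{n \geq}$ itself and every torsion theory below it collapse to the trivial one $(0, \M_{n \geq})$.

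First I would check that for each $(\T, \F)$ in the chain above $\mu_{n \geq}$, the torsion-free part $\F$ is already contained in $\M_{n \geq}$. For $\mu_{i \geq}$ with $i < n$ this is immediate from the definitions, since $\M_{i \geq} \subseteq \M_{n \geq}$. For $\mu_{\geq i}$ with $i \leq n$ it follows from Corollary \ref{cotnsurj}(2), which shows that objects of $\F_{tr_{i-1}}$ have Moore complex vanishing in degrees $> i$, hence in particular above $n$.

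Next, given $X \in \M_{n \geq}$ and $(\T, \F)$ from that chain, the short exact sequence $0 \to T_X \to X \to F_X \to 0$ supplied by TT2 in $Simp(Grp)$ has $F_X \in \F \subseteq \M_{n \geq}$; since $\M_{n \geq}$ is epireflective and therefore closed under limits in $Simp(Grp)$, the kernel $T_X$ also lies in $\M_{n \geq}$. TT1 is inherited automatically, so $(\T \cap \M_{n \geq}, \F)$ is a torsion theory of $\M_{n \geq}$, and the induced inequalities between the $\mu'_{i \geq}$ and $\mu'_{\geq i}$ transfer directly from $\mu(Grp)$ together with diagram (\ref{mugrpdiag}).

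Finally, I would verify that the restriction of $\mu_{n \geq}$ and of every torsion theory below it collapses to $(0, \M_{n \geq})$. For $\mu_{n \geq}$ this is immediate because $\M_{n \geq}$ is already its torsion-free part. For $\mu_{\geq n+1}, \mu_{n+1 \geq}$ and beyond, Corollary \ref{cotnsurj} forces the Moore complex of any $X \in \T \cap \M_{n \geq}$ to vanish both above degree $n$ and below degree $n+1$, leaving the zero complex; by conservativity of the Moore functor $N$ this gives $X = 0$. The main obstacle is precisely this Moore-complex bookkeeping in the collapsing cases; once it is settled, the rest of the argument transfers mechanically from diagram (\ref{mugrpdiag}) and Theorem \ref{teolop}.
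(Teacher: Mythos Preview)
Your proposal is correct and follows essentially the same route as the paper, which simply refers back to the argument of Proposition~\ref{latm1} and diagram~(\ref{mugrpdiag}): since $\M_{n \geq}$ is the torsion-free part of $\mu_{n \geq}$, every torsion theory at or below $\mu_{n \geq}$ restricts trivially, while the ones above restrict intact. You have merely made explicit the verifications (closure of $\M_{n\geq}$ under subobjects, the containments $\F \subseteq \M_{n\geq}$ via Corollary~\ref{cotnsurj}) that the paper leaves implicit.
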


Just as in $\M_{1 \geq}$ the subcategories $Dis(Grp)$, $Eq(Grp)$ and $Grpd(Grp)$ are torsion-free subcategories of $\M_{n \geq}$. In order to characterise the torsion categories  we recall some categories introduced by D. Conduch\'e.

\begin{defi} \cite{Con84} A group chain complex
\[\begin{tikzcd} L\ar[r, "\delta_2"] & M\ar[r, "\delta_1"] & N \end{tikzcd} \]
is called a 2-\textit{crossed module} if $N$ acts on $L$ and $M$ and the differentials $\delta_2, \delta_1$ are equivariant ($N$ acts over itself with conjugation), and there is a mapping
\[\{\ , \ \}: \begin{tikzcd} M \times M \ar[r]& L \end{tikzcd}\]
satisfying:
\begin{itemize}
\item[2XM1] $\delta_2\{m_0 , m_1\}= m_0m_1m_0^{-1} \ ^{\delta_1(m_0)}(m_1^{-1})$;
\item[2XM2] $\{ \delta_2(l_0), \delta_2(l_1) \}=[l_0 , l_1]$;
\item[2XM3] $\{\delta_2(l), m\}\{m, \delta_2(l)\}=l\ ^{\delta_1(m)}l^{-1}$;
\item[2XM4] $\{m_0,m_1m_2\}=\{m_0, m_1\}\{m_0,m_2\}\{\delta_2\{m_0,m_2\}^{-1},\ ^{\delta_1(m_0)}m_1 \}$;
\item[2XM5] $\{m_0m_1,m_2 \}=\{m_0,m_1m_2m_1^{-1}\}\ ^{\delta_1(m_0)}\{m_1,m_2\}; $
\item[2XM6] $ {}^n\{m_0,m_1\}= \{ {}^n m_0,  {}^nm_1\}$.
\end{itemize} 
The map $\{\ , \ \}$ is called the \textit{Peiffer lifting}. 

A morphism of 2-crossed modules is a morphism of chain complexes that preserves the group actions and the Peiffer lifting. The category of 2-crossed complexes will be denoted as $_2\X Mod$.
\end{defi}

In a 2-crossed module the Peiffer lifting defines an action of $M$ over $L$ as ${}^m(l)=l\{\delta_2(l)^{-1} ,m\}$, so $\delta_2$ is indeed a crossed module. On the other hand, $\delta_1$ is only a precrossed module. A crossed module is a 2-crossed module by setting $L=0$. If a 2-crossed module has $N=0$ then the equations get simplified, thus we obtain a reduced 2-crossed module as follows.  

\begin{defi}\label{redcrossdefi} \cite{Con84}
 A \textit{reduced 2-crossed module} is a group morphism $\delta: L \to M$ with a map $\{\ , \ \}: M\times M \to L$ satisfying:
\begin{enumerate}
\item $\delta \{m_0,m_1\}=[m_0, m_1]$,
\item $\{\delta(l_0), \delta(l_1) \}= [l_0, l_1]$,
\item $\{ \delta(l), m \}\{m, \delta(l)\}=1$,
\item $\{ m_0, m_1m_2 \}=\{m_0,m_1 \}\{m_0,m_2 \}\{ [m_2,m_0], m_1\}$,
\item $\{ m_0m_1, m_2 \}=\{m_0, m_1m_2m_1^{-1} \}\{m_1,m_2\}$.
\end{enumerate}
The category of reduced 2-crossed modules will be denoted as $R_2\X Mod$.
\end{defi} 
 
\begin{defi}\label{stablecrossdefi} \cite{Con84}
 A \textit{stable crossed module} is a group morphism $\delta: L \to M$ with a map $\{\ , \ \}: M\times M \to L$ satisfying:
\begin{enumerate}
\item $\delta \{m_0,m_1\}=[m_0,m_1]$,
\item $\{\delta(l_0), \delta(l_1)\}=[l_0,l_1]$,
\item $\{m_1, m_0 \}=\{m_0, m_1 \}^{-1}$,
\item $\{m_0m_1,m_2 \}=\{m_0m_1m_0^{-1} , m_0m_2m_0^{-1} \}\{m_0,m_2\}$.
\end{enumerate}
We will denote $St\X Mod$ for the category of stable crossed modules.
\end{defi}

The underlying morphism $\delta:L \to M$ of reduced 2-crossed module or of a stable crossed module is in fact a crossed module. Similar to the case of crossed modules/internal groupoids, these categories characterise simplicial groups via the normalization functor.

\begin{teo}\label{con} \cite{Con84}
\begin{enumerate}
\item The category $\M_{2 \geq}$ of simplicial groups with trivial Moore complex above degree 2 is equivalent to the category ${_2 \X Mod}$ of 2-crossed modules.
\item The category $\M_{1,2}$ of simplicial groups with trivial Moore complex except at degrees 1,2 is equivalent to the category $R_2 \X Mod$ of reduced 2-crossed modules.
\item The category $\M_{p,p+1}$ of simplicial groups with trivial Moore complex except at degrees $p,p+1$  for $p\geq 2$ is equivalent to the category $St_2 \X Mod$ of stable crossed modules.
\end{enumerate}
\end{teo}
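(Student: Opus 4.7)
The plan is to give, for each of (1)--(3), a pair of functors in opposite directions between the stated category of simplicial groups and the stated category of crossed-type algebraic data, and to check that they are quasi-inverse. I would work out (1) in detail; (2) follows by restriction to the full subcategory where $N = 0$, while (3) requires reworking the same construction one Moore level up and exploiting the extra degeneracies available there.

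For the forward functor $\Phi : \M_{2 \geq} \to {}_2\X Mod$ I would send a simplicial group $X$ to its truncated Moore complex $L \xrightarrow{\delta_2} M \xrightarrow{\delta_1} N$ with $L = N(X)_2$, $M = N(X)_1$, $N = N(X)_0$. The $N$-actions on $M$ and $L$ are the restrictions of conjugation in $X_1$ and $X_2$ along the iterated degeneracies $s_0$, respectively $s_0 s_0$; the simplicial identities for $d_i s_0$ guarantee these restrict to the Moore kernels, and equivariance of $\delta_1, \delta_2$ is routine. The Peiffer lifting $\{-,-\}: M \times M \to L$ would be defined by a commutator-type word in the degenerate lifts of the arguments, e.g.\ $\{m_0, m_1\} = [s_1 m_0, s_0 m_1] \cdot [s_0 m_1, s_0 m_0]$ or an equivalent formula from Conduch\'e's paper; applying $d_0$ and $d_1$ together with the simplicial identities shows this element lies in $\ker d_0 \cap \ker d_1 = L$, and axioms 2XM1--2XM6 then follow by manipulation of the resulting products in $X_2$, using $N(X)_n = 0$ for $n \geq 3$ at those steps where a putative witness at a higher level must vanish (notably in 2XM2).

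For the backward functor $\Psi$ I would rebuild a simplicial group from a 2-crossed module: put $X_0 = N$, $X_1 = M \rtimes N$ with the given $N$-action, and realise $X_2$ as an iterated semidirect product of the shape $L \rtimes (M \rtimes (M \rtimes N))$ in which the $M$-action on $L$ is given by ${}^m l = l \cdot \{\delta_2(l)^{-1}, m\}$, well-defined thanks to 2XM3, and the group multiplication on $L$ is twisted by the Peiffer lifting. The face maps $d_0, d_1, d_2: X_2 \to X_1$ would be given, after the obvious projection to $M \rtimes N$, by $\delta_2$ applied to the $L$-coordinate and the two projections onto the $M$-coordinates; the degeneracies $s_0, s_1: X_1 \to X_2$ are the evident inclusions; for $n \geq 3$ I would set $X_n = Cosk_2(X)_n$ so that the Moore complex remains trivial above degree $2$. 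One then checks $\Phi\Psi \cong \mathrm{id}$ and $\Psi\Phi \cong \mathrm{id}$ by inspection on generating data.

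The main obstacle is verifying that the group structure on $X_2$ is associative and that the proposed face and degeneracy maps satisfy every simplicial identity between levels $1$ and $2$; this is a long combinatorial computation in which each of 2XM1--2XM6 is consumed exactly once, with 2XM4 and 2XM5 encoding compatibility of multiplication in $X_2$ with $d_1$ and $d_2$, and 2XM6 encoding $N$-equivariance. For (2), imposing $N = 0$ trivialises both the $N$-action and the $\delta_1$-equivariance clauses, and 2XM1--2XM6 collapse to the five axioms of Definition \ref{redcrossdefi}, giving $\M_{1,2} \simeq R_2\X Mod$. For (3), the construction of $\Phi$ and $\Psi$ adapts by shifting the relevant Moore levels to $p, p+1$; the extra degeneracies $s_0, \dots, s_{p-2}$ now all act trivially on $N(X)_p$ and $N(X)_{p+1}$, which via an Eckmann--Hilton-style argument forces the antisymmetry $\{m_1, m_0\} = \{m_0, m_1\}^{-1}$ and simplifies the last axiom to the form in Definition \ref{stablecrossdefi}, yielding the equivalence with $St_2\X Mod$.
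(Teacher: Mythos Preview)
The paper does not give a proof of this theorem: it is stated with the attribution \cite{Con84} and used as a black box, so there is no argument in the paper to compare your proposal against. Your sketch is broadly faithful to Conduch\'e's original construction --- Moore complex plus conjugation actions and a commutator-word Peiffer lifting in one direction, iterated semidirect products and a $2$-coskeleton in the other --- and the reductions you describe for parts (2) and (3) are also the ones Conduch\'e uses. If your aim was simply to supply what the paper omits, the outline is sound; just be aware that the long verifications you flag (associativity of the twisted product on $X_2$, the full simplicial identities, and the Eckmann--Hilton step forcing antisymmetry for $p\geq 2$) are genuinely the bulk of the work and are not reproduced anywhere in this paper.
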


The bottom torsion categories of $\M_{n \geq}$ can be characterized in a similar way as the torsion categories of internal groupoids.

\begin{teo}\label{teom2} For $n=2$, consider the lattice  of torsion theories $\mu(\M_{2 \geq})$:
\[ 0 \quad  \leq  \quad \mu_{ \geq 2}'   \quad \leq\ \quad  \mu_{1 \geq}' \quad  \leq \quad \mu'_{\geq 1} \quad \leq \quad  \mu_{0 \geq}' \quad  \leq \quad \M_{2 \geq}\]
For the bottom torsion categories we have the equivalences:
\begin{enumerate}
\item  the torsion category $\M_{ \geq 1}\cap \M_{2 \geq}= \M_{1,2}$ of $\mu_{ \geq 1}'$ is equivalent to the category $R_2\X Mod$ of reduced 2-crossed modules;
\item the torsion category $Ker(Cot_1)\cap \M_{2 \geq}$ of $\mu_{1 \geq }'$ is equivalent to the category $R_2\X Mod\cap CExt$ of reduced 2-crossed modules $\delta: L \to M$ with $\delta$ a central extension;
\item the torsion category $\M_{\geq 2}\cap \M_{2 \geq}$ of $\mu_{ \geq 2}'$ is equivalent to the category of 2-crossed modules of the form $A \to 0 \to 0$ and hence, it is also equivalent to the category $Ab$ of abelian groups.
\end{enumerate}
\end{teo}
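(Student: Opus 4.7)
The plan is to characterize each of the three torsion subcategories by conditions on the Moore complex using Theorem \ref{teolop} and Corollary \ref{cotnsurj}, and then translate these conditions via Conduché's equivalences from Theorem \ref{con}.

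Part (1) is essentially by definition: an object of $\M_{\geq 1} \cap \M_{2 \geq}$ is a simplicial group whose Moore complex is concentrated in degrees $1$ and $2$, which is exactly $\M_{1,2}$, equivalent to $R_2\X Mod$ by Theorem \ref{con}(2). Part (3) is almost as short: an object $X$ of $\M_{\geq 2} \cap \M_{2 \geq}$ has Moore complex concentrated in degree $2$, so as a 2-crossed module it takes the shape $L \to 0 \to 0$ with trivial Peiffer lifting. Axiom 2XM2 then forces $[l_0, l_1] = \{0, 0\}$; taking $l_0 = l_1$ yields $\{0,0\} = 1$, so $L$ is abelian. Conversely, any abelian group defines such a 2-crossed module trivially, giving the equivalence with $Ab$.

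The substantive part is (2). By Corollary \ref{cotnsurj}(1), $X$ lies in $Ker(Cot_1) \cap \M_{2 \geq}$ if and only if its Moore complex is concentrated in degrees $1, 2$ and the differential $\delta := \delta_2 : M_2 \to M_1$ is a normal epimorphism, i.e.\ surjective. Via Theorem \ref{con}(2), this identifies $Ker(Cot_1) \cap \M_{2 \geq}$ with the full subcategory of $R_2\X Mod$ on reduced 2-crossed modules $\delta: L \to M$ with $\delta$ surjective. It then suffices to show that this subcategory coincides with $R_2\X Mod \cap CExt$. Taking $m_0 = m_1 = 1$, $m_2 = m$ in axiom 5 of Definition \ref{redcrossdefi} gives $\{1, m\} = \{1, m\}^2$, hence $\{1, m\} = 1$; axiom 2 then yields, for $k \in \ker(\delta)$ and $l \in L$, the identity $[k,l] = \{\delta(k), \delta(l)\} = \{1, \delta(l)\} = 1$, so $\ker(\delta) \leq Z(L)$. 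Thus any surjective $\delta$ in $R_2\X Mod$ is a central extension. Conversely, for a central extension $\delta: L \to M$ one defines a canonical Peiffer lifting $\{m_0, m_1\} := [\tilde m_0, \tilde m_1]$ for any lifts $\tilde m_i \in \delta^{-1}(m_i)$; centrality of $\ker(\delta)$ ensures this is independent of the choice of lifts, and the axioms of Definition \ref{redcrossdefi} then follow from standard commutator identities in $L$.

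I expect the main technical hurdle in part (2) to be the detailed verification of axioms $1$ through $5$ for the canonical Peiffer lifting on a central extension (especially axiom $4$, which involves nested commutators), together with the compatibility of this correspondence on morphisms. These steps are mechanical but not entirely trivial, as the centrality hypothesis must be used repeatedly to commute elements of $\ker(\delta)$ past other factors.
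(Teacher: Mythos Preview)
Your treatment of parts (1) and (3) matches the paper's almost exactly; the paper's argument for (3) is the one-line observation that $\delta_2$ is a crossed module (as recorded right after Definition~\ref{stablecrossdefi}), so with codomain $0$ the domain must be abelian, but your direct use of axiom 2XM2 is an equally valid route.

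For part (2), your forward implication is correct, but you are reproving from scratch something the paper has already recorded: the sentence after Definition~\ref{stablecrossdefi} states that the underlying morphism of a reduced 2-crossed module is a crossed module, and it is standard that a surjective crossed module has central kernel. The paper's proof of (2) is therefore one line. More importantly, your ``converse'' misidentifies what has to be shown. The claimed equivalence is between two \emph{full subcategories of $R_2\X Mod$}: those $(\delta,\{\,,\,\})$ with $\delta$ surjective, and those with $\delta$ a central extension. The reverse inclusion is then trivial, since a central extension is by definition surjective. There is no need to manufacture a Peiffer lifting on an arbitrary central extension, and doing so would not by itself establish the equivalence unless you also argued uniqueness of the lifting (which, incidentally, does follow from axiom~2 when $\delta$ is surjective, but you do not say this). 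So the ``main technical hurdle'' you anticipate---verifying axioms 1--5 for the commutator lifting---is work the theorem does not require.
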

\begin{proof}
1) It follows immediately from the fact that $\M_{ \geq 1}\cap \M_{2 \geq}$ is equivalent by definition to the category $\M_{1,2}$ of simplicial groups with trivial Moore complex except at degrees 1,2 and Theorem \ref{con}.

2) A simplicial group $X$ belongs to $Ker(Cot_1)\cap \M_{2 \geq}$ if and only if its Moore complex is trivial except $\delta_2:M_2 \to M_1$ which is, in addition, surjective (form Corollary \ref{cotnsurj}). This happens if and only if $\delta_2$ is a central extension, since a 2-reduced crossed module is in particular a crossed module.

3) The category $\M_{\geq 2}\cap \M_{2 \geq}$ is equivalent to the category of simplicial groups with trivial Moore complex except at degree 2, so it corresponds to a 2-crossed module of the form $L \to 0\to 0$. Since, in a 2-crossed module the morphism $\delta_2$ is a crossed module then $L$ must be an abelian group.
\end{proof}

\begin{teo}\label{teomn} For $n> 2$, in $\mu(\mathcal{M}_{n \geq})$ consider the  bottom torsion theories:
\[ 0 \quad  \leq  \quad \mu_{ \geq n}'   \quad \leq\ \quad  \mu_{n-1 \geq}' \quad  \leq \quad \mu'_{\geq n-1} \quad \dots  \]
Then for the torsion categories we have the equivalences:
\begin{enumerate}
\item  the torsion category $\M_{ \geq n-1}\cap \M_{n \geq}= \M_{n,n-1}$ of $\mu_{ \geq n-1}'$ is equivalent to the category $St \X Mod$ of stable crossed modules;
\item the torsion category $Ker(Cot_{n-1})\cap \M_{n \geq}$ of $\mu_{n-1 \geq }'$ is equivalent to the category $St\X Mod\cap CExt$ of stable crossed modules $\delta: L \to M$ with $\delta$ a central extension;
\item the torsion category $\M_{\geq n}\cap \M_{n \geq}$ of $\mu_{ \geq n}'$ is equivalent to the category $Ab$ of abelian groups.
\end{enumerate}
\end{teo}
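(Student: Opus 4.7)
The plan is to follow the proof of Theorem~\ref{teom2} line by line, replacing reduced 2-crossed modules by stable crossed modules. The key input is Theorem~\ref{con}(3): since $n>2$ we have $p:=n-1\geq 2$, so the equivalence $\M_{p,p+1}\simeq St\X Mod$ is available and takes on the role that $\M_{1,2}\simeq R_2\X Mod$ played when $n=2$.

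For part (1), the category $\M_{\geq n-1}\cap \M_{n\geq}$ consists by definition of simplicial groups whose Moore complex is concentrated in degrees $n-1$ and $n$, so it equals $\M_{n-1,n}$ and the claimed equivalence with $St\X Mod$ is an immediate instance of Theorem~\ref{con}(3). For part (2), I would intersect this description with Corollary~\ref{cotnsurj} applied with parameter $n-1$: a simplicial group in $\M_{n\geq}$ lies in $Ker(Cot_{n-1})$ if and only if its Moore complex is concentrated in degrees $n-1$ and $n$ and the differential $\delta:N(X)_n\to N(X)_{n-1}$ is a normal epimorphism. Under the equivalence of part (1) this translates to those stable crossed modules $\delta:L\to M$ whose underlying morphism is a surjective crossed module. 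As in Theorem~\ref{teom2}(2), the remaining observation is that every crossed module automatically satisfies $ker(\delta)\subseteq Z(L)$: taking $a\in ker(\delta)$ in the Peiffer identity $aa'a^{-1}={}^{\delta(a)}a'$ gives $aa'a^{-1}=a'$ for every $a'\in L$, so $a$ commutes with all of $L$. Thus a surjective underlying crossed module is automatically a central extension, delivering the equivalence with $St\X Mod\cap CExt$.

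For part (3), $\M_{\geq n}\cap \M_{n\geq}$ consists of simplicial groups whose Moore complex is nontrivial only at degree $n$. Viewing such an object inside $\M_{n-1,n}$ with $M_{n-1}=0$, it corresponds to a stable crossed module $\delta:L\to 0$: the Peiffer lifting $\{\,,\,\}:0\times 0\to L$ is necessarily trivial, and axiom (2) of Definition~\ref{stablecrossdefi} then forces $[l_0,l_1]=\{\delta(l_0),\delta(l_1)\}=1$, so $L$ is abelian; sending $X$ to $N(X)_n$ produces the asserted equivalence with $Ab$. The only substantive step in the whole argument is the centrality observation in part (2); everything else is bookkeeping via Theorem~\ref{con}(3) and Corollary~\ref{cotnsurj}.
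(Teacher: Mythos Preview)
Your proposal is correct and matches the paper's approach exactly: the paper simply says the proof follows from Theorem~\ref{con}(3) and is similar to Theorem~\ref{teom2}, and you have spelled out precisely that parallel argument. The only cosmetic difference is that in part~(3) you invoke axiom~(2) of Definition~\ref{stablecrossdefi} directly to force $L$ abelian, whereas the paper's template in Theorem~\ref{teom2}(3) appeals to the fact that the underlying $\delta$ is a crossed module; both routes are immediate and equivalent.
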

\begin{proof} It follows from 3) of Theorem \ref{con}, the proof is similar to Theorem \ref{teom2}. 
\end{proof}

\begin{coro} A simplicial group $X$ belongs to the torsion category $Ker(Cot_{n-1})\cap \M_{n \geq}$ of $\M_{n \geq}$ if and only if its Moore complex $M$ is a central extension of groups.
\end{coro}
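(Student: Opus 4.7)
The plan is to chase the characterization through the two earlier results and then invoke one elementary fact about crossed modules. First, I would observe that the two conditions $X \in \M_{n \geq}$ and $X \in Ker(Cot_{n-1})$ together pin down the shape of the Moore complex: the former forces $M_i = 0$ for $i > n$, while Corollary \ref{cotnsurj}(1) forces $M_i = 0$ for $i < n-1$ and makes the remaining differential $\delta\colon M_n \to M_{n-1}$ a normal epimorphism. Since we are working in $Grp$, normal epimorphisms are just surjections, so the Moore complex collapses to a surjective group homomorphism concentrated in degrees $n-1$ and $n$.

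Second, I would appeal to Theorem \ref{teomn} (or Theorem \ref{teom2} when $n=2$) to reinterpret this datum as a stable crossed module (resp.\ a reduced 2-crossed module) whose underlying morphism $\delta$ is a crossed module, as emphasized in the discussion after Definitions \ref{redcrossdefi} and \ref{stablecrossdefi}. The key remaining step is: if $\delta\colon L \to M$ is a crossed module and $\delta$ is surjective, then $\ker(\delta) \subseteq Z(L)$. This drops straight out of the Peiffer identity ${}^{\delta(l)}l' = l l' l^{-1}$: for any $k \in \ker(\delta)$ and any $l \in L$ one gets $l = {}^{1}l = {}^{\delta(k)}l = k l k^{-1}$, so $k$ is central. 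Hence $\delta$ is a central extension.

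For the converse, any Moore complex $M$ which is a central extension of groups (trivially concentrated in degrees $n-1,n$ with surjective differential) fulfils the hypotheses of Corollary \ref{cotnsurj}(1), so the associated simplicial group lies in $Ker(Cot_{n-1}) \cap \M_{n \geq}$. This closes the equivalence. There is no real obstacle here; the whole argument is essentially bookkeeping on top of the earlier theorems, with the only nontrivial ingredient being the Peiffer-identity argument showing that surjective crossed modules are automatically central extensions.
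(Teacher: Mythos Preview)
Your proposal is correct and follows essentially the same route as the paper: the corollary is an immediate consequence of Theorems~\ref{teom2}(2) and~\ref{teomn}(2), whose proofs already invoke Corollary~\ref{cotnsurj} to reduce to a surjective differential $\delta_n\colon M_n \to M_{n-1}$ and then use that the underlying morphism of a reduced 2-crossed module or stable crossed module is a crossed module, so that surjectivity forces centrality. You spell out the Peiffer-identity step more explicitly than the paper does, but the argument is the same.
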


\begin{coro} The categories ${}_2\X Mod$ of 2-crossed modules, $R_2\X Mod$ of reduced 2-crossed modules and $St \X Mod$ of stable crossed modules are semi-abelian.
\end{coro}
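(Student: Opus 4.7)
The plan is to invoke Proposition \ref{semiab} three times, applied to well-chosen torsion theories from the lattice $\mu(Grp)$ and its restrictions to the subcategories $\M_{n \geq}$.

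The first move is to show that $\M_{n \geq}$ is semi-abelian for every $n$. Since $\M_{n \geq}$ is by Theorem \ref{teolop} the torsion-free category of $\mu_{n \geq} = (Ker(Cot_n), \M_{n \geq})$ in $Simp(Grp)$, it suffices by Proposition \ref{semiab} to show this torsion theory is cohereditary, i.e.\ that $\M_{n \geq}$ is stable under normal epimorphisms in $Simp(Grp)$. Given a normal epimorphism $q : X \twoheadrightarrow Y$ with $X \in \M_{n \geq}$, the Moore functor $N$ sends $q$ to a level-wise normal epimorphism of chain complexes (since $N$ preserves normal epimorphisms), so $N(X)_k = 0$ for $k > n$ forces $N(Y)_k = 0$ for $k > n$. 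Combined with the equivalence ${}_2\X Mod \simeq \M_{2 \geq}$ of Theorem \ref{con}, this already proves that ${}_2\X Mod$ is semi-abelian.

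For $R_2\X Mod \simeq \M_{1,2} = \M_{\geq 1} \cap \M_{2 \geq}$ I would then move inside the now-available semi-abelian category $\M_{2 \geq}$: by Theorem \ref{teom2}, $\M_{1,2}$ is the torsion category of $\mu'_{\geq 1}$ in $\mu(\M_{2 \geq})$, so by the hereditary half of Proposition \ref{semiab} it is enough to check that $\mu'_{\geq 1}$ is hereditary. Monomorphisms in the reflective subcategory $\M_{2 \geq}$ coincide with monomorphisms in $Simp(Grp)$, and $N$ preserves finite limits (hence monomorphisms), so a monomorphism $Y \hookrightarrow X$ in $\M_{2 \geq}$ with $X \in \M_{\geq 1}$ forces $N(Y)_0 = 0$ and therefore $Y \in \M_{\geq 1}$. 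The same argument applied inside $\M_{(p+1) \geq}$ for any fixed $p \geq 2$ handles $St\X Mod \simeq \M_{p,p+1}$ via Theorem \ref{teomn}. The only real content is bookkeeping: one must verify that the Moore normalization transmits the required closure conditions, which reduces to the already-cited exactness properties of $N$; after that, the statement follows directly from Proposition \ref{semiab} together with the equivalences of Theorem \ref{con}.
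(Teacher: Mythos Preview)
Your proposal is correct and follows essentially the same route as the paper: invoke Proposition~\ref{semiab} first for the cohereditary torsion theory $\mu_{n\geq}$ to get that $\M_{n\geq}$ (hence ${}_2\X Mod\simeq\M_{2\geq}$) is semi-abelian, and then again for the hereditary restricted torsion theories $\mu'_{\geq 1}$ in $\M_{2\geq}$ and $\mu'_{\geq n-1}$ in $\M_{n\geq}$ identified in Theorems~\ref{teom2} and~\ref{teomn}. The paper's own proof is a two-line citation of these same ingredients; you have additionally spelled out, via the exactness properties of $N$, why the needed cohereditary and hereditary conditions hold, which the paper leaves implicit (presumably deferring to \cite{Lop22b}).
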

\begin{proof} From Proposition \ref{semiab}, the categories $\M_{n \geq}$ are semi-abelian. In particular, for $n=2$ the category of 2-crossed modules is semi-abelian. Similarly, $R_2\X Mod$ and $St \X Mod$ are semi-abelian from Theorems \ref{teom2}, \ref{teomn} and Proposition \ref{semiab}.
\end{proof}

\subsection{The case of $\M_{\geq n}$}

A dual behaviour can be noticed when we work with the torsion categories $\M_{\geq n}$, which are  also semi-abelian (again form Proposition \ref{semiab}). When considering $\mu(\M_{\geq n})$, the restriction of $\mu(Grp)$ to $\M_{\geq n}$, we first obtain that the torsion theories above $\mu_{n\geq}$ are trivialized. And second,  that the upper torsion-free categories are equivalent to abelian groups and the different kinds of crossed modules.

\begin{prop} The lattice $\mu(\M_{\geq n})$ is given by:
\[0 \quad \leq \quad \dots \quad \leq \quad \mu_{n+1\geq}' \quad \leq \quad \mu_{\geq n+1}' \quad \leq \quad \mu_{n \geq}' \quad \leq \quad \M_{\geq n} \]
where $\mu_{i \geq}'$, $\mu_{\geq i}'$ are the restriction of the torsion theories of $\mu(Grp)$. 
\end{prop}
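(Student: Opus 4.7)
The approach is entirely dual to that of Proposition \ref{latm1}. There, one exploited the fact that $\M_{1\geq}$ is itself a torsion-free subcategory appearing in $\mu(Grp)$ in order to collapse all torsion theories lying below $\mu_{1\geq}$; here $\M_{\geq n}$ is itself the torsion category of $\mu_{\geq n}$, so the symmetric argument will collapse every torsion theory lying at or above $\mu_{\geq n}$ to the trivial top theory $(\M_{\geq n},0)$, leaving precisely the listed restrictions.

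The first step is to check that, for any torsion theory $(\T,\F)$ in $\mu(Grp)$, the pair $(\T\cap \M_{\geq n},\F\cap \M_{\geq n})$ is actually a torsion theory in $\M_{\geq n}$. Axiom (TT1) is inherited from $\mu(Grp)$. For (TT2), one needs the short exact sequence $0\to T_X\to X\to F_X\to 0$ produced for $X\in \M_{\geq n}$ to remain inside $\M_{\geq n}$. This follows from the Moore-complex characterization $\M_{\geq n}=\{X:N(X)_i=0\text{ for }i<n\}$ together with the preliminaries recalled after Theorem \ref{teolop}: since Moore normalization is left exact and preserves normal epimorphisms, $\M_{\geq n}$ is closed in $Simp(Grp)$ under subobjects and under normal quotients, so the mono $T_X\hookrightarrow X$ and the normal epi $X\twoheadrightarrow F_X$ both keep us in $\M_{\geq n}$. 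The partial order is evidently preserved under restriction.

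The second step is the collapse. Writing $\mu_{\geq n}=(\M_{\geq n},\F_{tr_{n-1}})$ by Theorem \ref{teolop}, the assumption $(\T,\F)\ge \mu_{\geq n}$ in $\mu(Grp)$ gives $\M_{\geq n}\subseteq \T$ and $\F\subseteq \F_{tr_{n-1}}$. Hence $\T\cap\M_{\geq n}=\M_{\geq n}$, while any object of $\F\cap\M_{\geq n}\subseteq \F_{tr_{n-1}}\cap\M_{\geq n}$ is simultaneously torsion and torsion-free for $\mu_{\geq n}$ and thus zero. Consequently $\mu_{\geq n}', \mu_{n-1\geq}', \mu_{\geq n-1}',\dots$ all coincide with $(\M_{\geq n},0)$. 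For the theories strictly below, namely $\mu_{n\geq}, \mu_{\geq n+1}, \mu_{n+1\geq},\dots$, the explicit Moore-complex descriptions in Theorem \ref{teolop}, particularly the diagrams (\ref{sescok}) and (\ref{sesker}), exhibit simplicial groups living in $\M_{\geq n}$ that distinguish them, so the restrictions remain distinct and non-trivial, with the linear order inherited from $\mu(Grp)$. I do not anticipate a serious obstacle; the only non-cosmetic check is the closure property of $\M_{\geq n}$ in the first step, and this is immediate from the Moore-complex description.
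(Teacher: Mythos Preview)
Your argument is correct and is exactly the dual of the paper's proof of Proposition~\ref{latm1}; in fact the paper gives no explicit proof for this proposition, relying on the evident symmetry with Propositions~\ref{latm1} and~\ref{latmn}. You have supplied more detail than the paper does, in particular the verification that $\M_{\geq n}$ is closed under subobjects and regular quotients (so that the restricted pairs are genuine torsion theories) and the remark that the lower restrictions remain pairwise distinct, neither of which the paper spells out.
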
 

\begin{teo}
For the category $\M_{\geq 1}$ we have 
\begin{enumerate}
\item the torsion-free category of $\mu_{1 \geq}'$ given by $\M_{1 \geq}\cap\M_{\geq 1}$ is equivalent to the category of abelian groups;
\item the torsion-free category of $\mu_{2 \geq}'$ given by $\M_{2 \geq}\cap\M_{\geq 1}$ is equivalent to the category $R_2 \X Mod$ of Reduced 2-crossed modules.
\end{enumerate}

For the category $\M_{\geq n}$ and $n \leq 2$ we have
\begin{enumerate}
\item the torsion-free category of $\mu_{n \geq}'$ given by $\M_{n \geq}\cap\M_{\geq n}$ is equivalent to the category of abelian groups;
\item the torsion-free category of $\mu_{n+1 \geq}'$ given by $\M_{n+1 \geq}\cap\M_{\geq n}$ is equivalent to the category $St \X Mod$ of stable crossed modules.
\end{enumerate}
\end{teo}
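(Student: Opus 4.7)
The plan is to identify each intersection $\M_{k \geq} \cap \M_{\geq n}$ through the support of the Moore complex and then translate via the normalization equivalences of Theorem \ref{con}. Since $X \in \M_{k \geq}$ iff $N(X)_i = 0$ for $i > k$, and $X \in \M_{\geq n}$ iff $N(X)_i = 0$ for $i < n$, we have $X \in \M_{k \geq} \cap \M_{\geq n}$ iff $N(X)$ is supported in the interval $[n,k]$. Specializing: $\M_{1 \geq} \cap \M_{\geq 1}$ and $\M_{n \geq} \cap \M_{\geq n}$ consist of simplicial groups with Moore complex concentrated at a single degree, while $\M_{2 \geq} \cap \M_{\geq 1}$ and $\M_{n+1 \geq} \cap \M_{\geq n}$ coincide by definition with $\M_{1,2}$ and $\M_{n,n+1}$, respectively.

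From here the ``crossed module'' halves of the theorem are immediate: the equivalence $\M_{1,2} \simeq R_2 \X Mod$ is Theorem \ref{con}(2), and the equivalence $\M_{n,n+1} \simeq St \X Mod$ for $n \geq 2$ is Theorem \ref{con}(3) (I read the ``$n \leq 2$'' in the statement as a typo for $n \geq 2$, which is the hypothesis that Theorem \ref{con}(3) actually requires).

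For the ``abelian group'' halves, the case $\M_{n \geq} \cap \M_{\geq n}$ with $n \geq 2$ has already been established as Theorem \ref{teomn}(3) (and, for $n=2$, as Theorem \ref{teom2}(3)), so no further argument is needed. The one case that requires a direct verification is $\M_{1 \geq} \cap \M_{\geq 1}$: under the equivalence $\M_{1 \geq} \simeq \X Mod$, a simplicial group whose Moore complex is concentrated at degree $1$ corresponds to a crossed module of the form $\delta: M_1 \to 0$, and the Peiffer identity ${}^{\delta(a)}(a') = a a' a^{-1}$ degenerates to $a' = a a' a^{-1}$, forcing $M_1$ to be abelian. Conversely, any abelian group $A$ defines such a crossed module $A \to 0$, and the resulting forgetful functor to $Ab$ is an equivalence.

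I do not expect a genuine obstacle. The theorem is essentially a repackaging of the equivalences of Theorem \ref{con} in the language of restricted torsion theories, dual to how Theorems \ref{teom2} and \ref{teomn} treated the subcategories $\M_{n \geq}$. The only substantive step is the one-line Peiffer computation that handles the $n=1$ abelian case; everything else reduces to bookkeeping on Moore complex supports.
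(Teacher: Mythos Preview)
Your proposal is correct and follows exactly the route the paper intends: the paper's own proof is the single sentence ``The proof is similar to the case of torsion categories in $\M_{n \geq}$,'' and what you have written is precisely that dual argument made explicit---identify the Moore-complex support of each intersection, invoke Theorem~\ref{con} for the $R_2\X Mod$ and $St\X Mod$ cases, and use the Peiffer identity (as in Theorems~\ref{teom2}(3) and~\ref{teomn}(3)) for the single-degree abelian cases. Your reading of ``$n \leq 2$'' as a typo for ``$n \geq 2$'' is also correct, since Theorem~\ref{con}(3) requires $p \geq 2$.
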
 
\begin{proof} The proof is similar to the case of torsion categories in $\M_{n \geq}$.
\end{proof}

\section{Pretorsion theories in simplicial groups}

Pretorsion theories introduced in \cite{FacFin} present a generalization of a torsion theory for non-pointed categories. In brief words, we replace the zero object in the definition of a torsion theory in $\X$ for a class $\mathcal{Z}$ of \textit{trivial objects}.  We recall only the basic results and definitions useful for this work, we refer to \cite{FacFinGrn} for the basic aspects of pretorsion theories in categories.

Let $\X$ be a category and $\mathcal{Z}$ a class of objects of $\X$, which we will call trivial objects. A morphism $f: A \to B$ in $\X$ is called $\mathcal{Z}$-trivial if it is factorized by an object $Z$ in $\mathcal{Z}$.

Given a morphism $f:A \to B$ in $\X$ a morphism $k:K \to A$ is a $\mathcal{Z}$-\textit{prekernel} of $f$ if:
\begin{enumerate}
\item the composite $fk$ is $\mathcal{Z}$-trivial.
\item for any morphism $x: X \to A$ such that $ f \alpha$ is $\mathcal{Z}$-trivial then there is a unique $\lambda: X \to K$ such that $k \lambda= x$.
\end{enumerate}

The notion of a $\mathcal{Z}$-\textit{precokernel} is dually defined. Any $\mathcal{Z}$-prekernel is a monomorphism and any $\mathcal{Z}$-precokernel is an epimorphism. Given morphisms $f: A \to B$ and $g:B \to C$ the sequence
\[\begin{tikzcd} A\ar[r,"f"]&B\ar[r, "g"]&C\end{tikzcd}\]
is called a short $\mathcal{Z}$-preexact sequence if $f$ is a $\mathcal{Z}$-prekernel of $g$ and $g$ is a $\mathcal{Z}$-precokernel of $f$.

\begin{defi}\label{defprett} A pair $(\mathbb{T}, \mathbb{F})$ of full subcategories of a category $\X$ is a $\mathcal{Z}$-pretorsion theory in $\X$, with $\mathcal{Z}= \mathbb{T} \cap \mathbb{F}$, if:
\begin{enumerate}
\item any morphism $f: T \to F$ with $T$ in $\mathbb{T}$ and $F$ in $\mathbb{F}$ is $\mathcal{Z}$-trivial.
\item for any object $X$ in $\X$ there is a short $\mathcal{Z}$-preexact sequence:
\[\begin{tikzcd} T_X \ar[r, "\epsilon_X"] & X \ar[r, "\eta_X"] & F_X \end{tikzcd}\]
with $T_X$ in $\mathbb{T}$ and $F_X$ in $\mathbb{F}$.
\end{enumerate}
\end{defi}

Given a pretorsion theory $(\mathbb{T}, \mathbb{F})$ a pretorsion theory we have that $\mathbb{T}$ is a monocoreflective subcategory of $\X$ with counit $\epsilon_X$ in the Definition \ref{defprett}, dually, $\mathbb{F}$ is epireflective subcategory of $\X$ with unit $\eta_X$.

Now, we fix a semi-abelian category $\X$. We can define a pretorsion theory in $\X$ using torsion theories in $\X$. The next result was first noticed in the unpublished work \cite{Man15} in the context of crossed modules.

\begin{teo}\label{pretorsionteo} Let $\X$ be a semi-abelian category and torsion theories in $\X$, $(\T, \F)$ and $(\mathcal{S}, \mathcal{G})$ such that $ (\mathcal{S}, \mathcal{G}) \leq (\T, \F)$. For an object $X$ in $\X$ we have the associated short exact sequences
\[\begin{tikzcd} 0\ar[r] & T(X)\ar[r, "t_x"] & X\ar[r, "f_X"] & F(X)\ar[r] &0 \end{tikzcd}\]
and, respectively,
\[\begin{tikzcd} 0\ar[r] & S(X)\ar[r, "s_X"] & X\ar[r, "g_X"] & G(X)\ar[r] &0 \end{tikzcd} .\]
Then, the pair $(\mathbb{T}, \mathbb{F})= (\T, \mathcal{G})$ is a $\mathcal{Z}$-pretorsion theory in $\X$ and for an object $X$ we have the  associated  short $\mathcal{Z}$-preexact sequence:
\[\begin{tikzcd} T(X)\ar[r, "t_X"] & X\ar[r, "g_X"] & G(X) \, . \end{tikzcd} \]    
\end{teo}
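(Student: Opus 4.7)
The plan is to translate the ordering $(\mathcal{S},\mathcal{G}) \leq (\T,\F)$---which means $\mathcal{S} \subseteq \T$ and, equivalently, $\F \subseteq \mathcal{G}$---into two comparison natural transformations between the adjunctions of the two torsion theories, and then to reduce $\mathcal{Z}$-triviality of the relevant composites to ordinary zero conditions via axiom TT1 applied to whichever of the two torsion theories is pertinent.

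First I would produce the comparison maps. Since $S(X)\in\mathcal{S}\subseteq\T$, the universal property of the $\T$-coreflection $t_X$ yields a unique $\sigma_X:S(X)\to T(X)$ with $t_X\sigma_X = s_X$. Dually, since $F(X)\in\F\subseteq\mathcal{G}$, the universal property of the $\mathcal{G}$-reflection $g_X$ yields a unique $\phi_X:G(X)\to F(X)$ with $\phi_X g_X = f_X$. These two factorizations are the main technical input for everything that follows.

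Next I would verify the TT1-analog for the pretorsion theory: any $\alpha:T\to G$ with $T\in\T$ and $G\in\mathcal{G}$ is $\mathcal{Z}$-trivial. I would factor $\alpha$ uniquely through the $\mathcal{G}$-reflection $g_T:T\to G(T)$ and show $G(T)\in\mathcal{Z}$. It is in $\mathcal{G}$ by construction; to see $G(T)\in\T$ it suffices to check $F(G(T))=0$. But the composite $f_{G(T)}\,g_T:T\to F(G(T))$ goes from $\T$ to $\F$ and vanishes by TT1 for $(\T,\F)$; since $g_T$ is epi this forces $f_{G(T)}=0$, and since $f_{G(T)}$ is itself a normal epimorphism this forces $F(G(T))=0$. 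Hence $G(T)\in\T\cap\mathcal{G}=\mathcal{Z}$ and $\alpha$ is $\mathcal{Z}$-trivial.

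Finally I would prove that $T(X)\xrightarrow{t_X}X\xrightarrow{g_X}G(X)$ is short $\mathcal{Z}$-preexact. The composite $g_X t_X$ is $\mathcal{Z}$-trivial by the previous step. For the $\mathcal{Z}$-prekernel clause, given $x:X'\to X$ such that $g_X x = h k$ with $k:X'\to Z$, $h:Z\to G(X)$ and $Z\in\mathcal{Z}$, the map $\phi_X h:Z\to F(X)$ goes from $\T$ to $\F$ and so $\phi_X h = 0$ by TT1 for $(\T,\F)$; consequently $f_X x = \phi_X g_X x = 0$ and $x$ factors uniquely through $t_X = \ker(f_X)$. Dually, for the $\mathcal{Z}$-precokernel clause, given $y:X\to Y$ such that $y t_X = h k$ with $k:T(X)\to Z$, $h:Z\to Y$ and $Z\in\mathcal{Z}$, the map $k\sigma_X:S(X)\to Z$ goes from $\mathcal{S}$ to $\mathcal{G}$ and so $k\sigma_X = 0$ by TT1 for $(\mathcal{S},\mathcal{G})$; consequently $y s_X = y t_X \sigma_X = 0$ and $y$ factors uniquely through $g_X = cok(s_X)$. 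The main subtlety is precisely this asymmetry: the prekernel clause invokes TT1 for the larger theory $(\T,\F)$ while the precokernel clause invokes TT1 for the smaller theory $(\mathcal{S},\mathcal{G})$, with $\phi_X$ and $\sigma_X$ mediating between them; otherwise the argument is a formal manipulation of the adjunction diagram (\ref{ttadj}) for each of the two torsion theories.
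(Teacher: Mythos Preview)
Your proof is correct and follows essentially the same strategy as the paper: reduce $\mathcal{Z}$-triviality to ordinary zero via TT1 for one of the two torsion theories, using the comparison $\phi_X:G(X)\to F(X)$ (the paper calls it $q$) for the prekernel clause and its dual for the precokernel clause. The one substantive variation is your verification of axiom~1: you factor $\alpha:T\to G$ through the $\mathcal{G}$-reflection $G(T)$ and argue $F(G(T))=0$, whereas the paper takes the regular-epi/mono image of $\alpha$ and invokes closure of $\T$ under quotients and of $\mathcal{G}$ under subobjects. Both are valid; the paper's Remark immediately after the theorem explicitly records your reflection-based factorization (and the dual coreflection-based one) as alternatives to the image factorization. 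Your route also absorbs the separate check that $g_X t_X$ is $\mathcal{Z}$-trivial into the general axiom~1 argument, while the paper proves it directly via Noether's third isomorphism theorem identifying $T(X)/S(X)$ with $\ker(q)$.
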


\begin{proof}
To prove axiom 1 of a pretorsion theory consider a morphism  $\alpha: X \to Y$ with $X$ in $\T$ and $Y$ in $\mathcal{G}$. Consider the normal-epi/mono factorization of $\alpha$:
\[\begin{tikzcd} T\ar[rr, "\alpha"]\ar[rd, "e"']&&G \, .\\ &\alpha(T)\ar[ru, "m"']&
\end{tikzcd} \]
Since $\T$ is closed under quotients and $\mathcal{G}$ is closed under subobjects in $\X$ since they are a torsion and a torsion-free subcategory, respectively, of $\X$. It is clear that $\alpha(X)$ is in $\mathcal{Z}= \T \cap \mathcal{G}$.

To prove axiom 2 consider the sequence
\[\begin{tikzcd} T(X)\ar[r, "t_X"] & X\ar[r, "g_X"] & G(X) \end{tikzcd}. \]
We first notice that the composite factorizes through the object $T(X)/S(X)$:
\[\begin{tikzcd} S(X)\ar[rd]&&&&F(X) \, . \\ & T(X)\ar[r, "t_X"]\ar[rd]& X \ar[r, "g_X"]& G(X)\ar[ru, "q"]& \\&& T(X)/S(X)\ar[ru]&& \end{tikzcd} \]

The object $T(X)/S(X)$ is in $\T$ since is a quotient of $T(X)$, on the other hand it follows from Noether's third isomorphism that $T(X)/S(X)$ is the kernel of $q: G(X) \to F(X)$, so it belongs to $\mathcal{G}$. So, we have that the composite $g_Xt_X$ is $\mathcal{Z}$-trivial.

To prove that $t_x$ is a $\mathcal{Z}$-prekernel of $g_X$ consider a morphism $a: A \to X$ such that $g_xa$ is $\mathcal{Z}$-trivial. So, we have a commutative diagram
\[\begin{tikzcd} && F(X) \\ T(X)\ar[r, "t_X"] & X\ar[r, "g_X"]\ar[ru, "f_X"] & G(X)\ar[u, "q"'] \\ & A\ar[r,"b"]\ar[u,"a"] & Z\ar[u, "c"'] \end{tikzcd}\]
with $Z$ in $\mathcal{Z}$. Notice that the composite $f_x a=0$ since it factors through $qc$ and $Z$ is in $\T$. Since $t_X$ is the kernel of $f_x$ then we have a morphism $\lambda$ such that $a= t_x\lambda$ and then $t_x$ is a $\mathcal{Z}$-kernel of $g_X$. 

Dually, $g_x$ is a $\mathcal{Z}$-precokernel of $t_X$. Thus, we have a short $\mathcal{Z}$-preexact sequence.
\end{proof}  

\begin{rmk} Under the hypothesis of \ref{pretorsionteo}, we have seen that morphism $\alpha: X \to Y$ with $X$ in $\T$ and $Y$ in $\mathcal{G}$ is $\mathcal{Z}$-trivial, we  have noticed that the image of $\alpha$ belongs to $\mathcal{Z}$. However, beside the image factorization there are two other natural choices of factorizations of $\alpha$ to also prove this fact. Indeed, since torsion-free subcategories are epireflective $\alpha$ factors through the quotient $g_X: X \to G(X)$, and $G(X)$ belongs to $\mathcal{Z}$. Similarly, $\alpha$ also factors the monomorphism $t_Y: T(Y) \to Y$. Moreover, these factorizations satisfy that for any morphism $\alpha: X \to Y$ with epi/mono factorization $(e, m)$ there are unique morphisms $\beta$ and $\gamma$ such that the following diagram commutes:
\[\begin{tikzcd} & T(Y) \ar[rd, "t_Y"] & \\ X\ar[r, "e"]\ar[ru, "\alpha''"]\ar[rd, "g_X"'] & I\ar[r, "m"]\ar[u, "\gamma"] & Y \\ & G(X)\ar[ru, "\alpha'"']\ar[u, "\beta"] & \end{tikzcd} .\]
\end{rmk} 

\begin{coro}\label{trivialissemi} Consider the hypothesis as in \ref{pretorsionteo}. If the torsion theory $(\T, \F)$ is hereditary and the torsion theory $(\mathcal{S}, \mathcal{G})$ is cohereditary then the category of trivial objects $\mathcal{Z}= \T \cap \mathcal{G}$ is semi-abelian.
\end{coro}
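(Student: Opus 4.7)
The plan is to realise $\mathcal{Z}$ as the torsion-free part of a cohereditary torsion theory inside the semi-abelian category $\T$, and then invoke Proposition \ref{semiab} once more. By the hereditariness of $(\T,\F)$ that proposition already gives $\T$ semi-abelian, so $\T$ provides the ambient category in which the construction will take place.

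The first step will be to verify that $(\mathcal{S},\mathcal{Z})$ is a torsion theory in $\T$. Axiom TT1 is inherited from $(\mathcal{S},\mathcal{G})$: any $\mathcal{S}$-to-$\mathcal{Z}$ morphism is already an $\mathcal{S}$-to-$\mathcal{G}$ morphism in $\X$, hence zero. For TT2, given $X\in\T$ I will take the short exact sequence associated with $(\mathcal{S},\mathcal{G})$ in $\X$,
\[\begin{tikzcd} 0\ar[r] & S(X)\ar[r] & X\ar[r] & G(X)\ar[r] & 0.\end{tikzcd}\]
Here $S(X)\in\mathcal{S}\subseteq\T$ because $(\mathcal{S},\mathcal{G})\leq(\T,\F)$, and $G(X)\in\T$ because torsion subcategories are closed under quotients in the ambient category; thus $G(X)\in\T\cap\mathcal{G}=\mathcal{Z}$. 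The inclusion $J\colon\T\hookrightarrow\X$ is left adjoint to the coreflector, so it preserves cokernels; and because $\T$ is full in $\X$, a kernel in $\X$ whose vertex happens to lie in $\T$ is also a kernel in $\T$. Hence the displayed sequence is short exact in $\T$ as well.

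Next, I check that $(\mathcal{S},\mathcal{Z})$ is cohereditary inside $\T$. Given a normal epi $q\colon F\twoheadrightarrow Q$ in $\T$ with $F\in\mathcal{Z}$, the colimit-preservation of $J$ makes $q$ a normal epi in $\X$; cohereditariness of $(\mathcal{S},\mathcal{G})$ then gives $Q\in\mathcal{G}$, and closure of $\T$ under quotients gives $Q\in\T$, whence $Q\in\mathcal{Z}$. Applying Proposition \ref{semiab} to this cohereditary torsion theory inside the semi-abelian $\T$ yields $\mathcal{Z}$ semi-abelian.

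Symmetrically, one could exhibit $\mathcal{Z}$ as the torsion part of a hereditary torsion theory $(\mathcal{Z},\F)$ inside the semi-abelian category $\mathcal{G}$ (using $\F\subseteq\mathcal{G}$ together with limit-preservation of the inclusion $\mathcal{G}\hookrightarrow\X$). I expect the only delicate point in either route to be checking that ``short exact sequence'' and ``normal epimorphism'' carry the same meaning in the subcategory and in the ambient $\X$; both issues are settled by the adjointness properties of the inclusion of a torsion, respectively torsion-free, subcategory.
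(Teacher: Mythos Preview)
Your proposal is correct and follows essentially the same route as the paper's own proof: realise $\mathcal{Z}$ as the torsion-free class of the torsion theory $(\mathcal{S},\mathcal{Z})$ in the semi-abelian category $\T$, observe this torsion theory is cohereditary, and apply Proposition~\ref{semiab} twice. The paper's proof is terser, simply asserting these facts, whereas you spell out why the short exact sequence descends to $\T$ and why cohereditariness is inherited; your symmetric remark about $(\mathcal{Z},\F)$ inside $\mathcal{G}$ is a valid dual route not mentioned in the paper.
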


\begin{proof} The pair $(\mathcal{S}, \mathcal{Z})$ is a torsion theory in $\T$. It follows from \ref{semiab} that $\T$ is semi-abelian. Then $\mathcal{Z}$, since is a cohereditary torsion-free subcategory of $\T$ is also semi-abelian (by \ref{semiab}).
\end{proof}

Now we turn our attention back to the lattice $\mu(Grp)$. Since this is an linearly ordered lattice, theorem \ref{pretorsionteo} produces a family of examples of pretorsion theories in $Simp(Grp)$. The trivial categories of these pretorsion theories show interesting properties.

\begin{teo}\label{pretorteo} Let $\mu(Grp)$ the lattice of torsion theories in $Simp(Grp)$ defined as in \ref{teolop}:
\begin{multline*}
0  \leq \dots \leq \; \mu_{ n+1\geq}  \;  \leq \;   \mu_{\geq n+1} \;  \leq \; \mu_{n\geq} \;   \leq \;  \mu_{\geq n} \;  \leq \dots \\  \dots \leq \; \mu_{\geq 2} \; \leq  \; \mu_{1\geq} \; \leq \;  \mu_{\geq 1} \;  \leq  \; \mu_{0\geq} \;   \leq Simp(Grp)\, . 
\end{multline*}
For the pretorsion theories defined by this lattice as in \ref{pretorsionteo} we have:
\begin{enumerate}
\item The trivial category $\mathcal{Z}$ given by the pair of torsion theories $\mu_{n \geq} \leq \mu_{ \geq n }$ is equivalent to the category $Ab$ of abelian groups. 
\item The trivial category $\mathcal{Z}$ given by the pair of torsion theories $\mu_{m \geq} \leq \mu_{ \geq n }$ is equivalent to the category of simplicial groups with truncated Moore complex above $m$ and  under $n$. Moreover, it is a semi-abelian category.
\item The trivial category $\mathcal{Z}$ given by the pair of torsion theories $\mu_{ \geq n+1} \leq \mu_{ n \geq }$ is equivalent to the category $Grp$ of groups.
\end{enumerate}
\end{teo}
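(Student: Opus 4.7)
The plan is to apply Theorem \ref{pretorsionteo} to each appropriate pair of torsion theories in $\mu(Grp)$ and identify the trivial category $\mathcal{Z} = \T \cap \mathcal{G}$ via the Moore-complex characterizations from Theorem \ref{teolop} and Corollary \ref{cotnsurj}; semi-abelianness in (2) will then follow from Corollary \ref{trivialissemi}.

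For (1), the pair $\mu_{n \geq} \leq \mu_{\geq n}$ (valid for $n \geq 1$) gives $\mathcal{Z} = \mathcal{M}_{\geq n} \cap \mathcal{M}_{n \geq}$, namely the simplicial groups $X$ whose Moore complex vanishes in every degree $i \neq n$. Via the algebraic models of Theorem \ref{con} (or the classical fact that a simplicial group whose Moore complex is concentrated in a single positive degree is an Eilenberg--Mac Lane object), $N(X)_n$ is forced to be abelian, and the assignment $X \mapsto N(X)_n$ supplies the claimed equivalence with $Ab$.

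For (2), the pair $\mu_{m \geq} \leq \mu_{\geq n}$ (with $m \geq n \geq 1$) yields $\mathcal{Z} = \mathcal{M}_{\geq n} \cap \mathcal{M}_{m \geq}$, which by definition is the subcategory of simplicial groups whose Moore complex is supported in degrees $n, n+1, \dots, m$. To apply Corollary \ref{trivialissemi}, I would verify that $\mu_{\geq n}$ is hereditary and $\mu_{m \geq}$ is cohereditary; both are immediate from the Moore-complex description, since $N$ preserves finite limits (so a subobject cannot populate a strictly lower degree, giving hereditariness of $\mathcal{M}_{\geq n}$) and $N$ preserves normal epimorphisms (so $\mathcal{M}_{m \geq}$ is closed under normal quotients).

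For (3), the pair $\mu_{\geq n+1} \leq \mu_{n \geq}$ gives $\mathcal{Z} = Ker(Cot_n) \cap \F_{tr_n}$. By Corollary \ref{cotnsurj}, the Moore complex of such an $X$ has the shape $\cdots \to 0 \to N(X)_{n+1} \xrightarrow{\delta_{n+1}} N(X)_n \to 0 \to \cdots$, with $\delta_{n+1}$ simultaneously a normal epimorphism and a normal monomorphism, hence an isomorphism. Passing to the algebraic model in $\mathcal{M}_{n,n+1}$ (a crossed module when $n=0$, a reduced 2-crossed module when $n=1$, a stable crossed module when $n \geq 2$, via Theorems \ref{con}, \ref{teom2} and \ref{teomn}), the isomorphism $\delta_{n+1}$ determines all the extra data uniquely: any Peiffer lifting is forced to equal $\delta_{n+1}^{-1}([-,-])$, and the remaining crossed-module axioms hold automatically. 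Thus $X \mapsto N(X)_n$ yields an equivalence $\mathcal{Z} \simeq Grp$. The main obstacle is exactly this last collapse, that is, checking that the isomorphism hypothesis on $\delta_{n+1}$ trivializes the Peiffer-lifting axioms in each of the three algebraic models; this is a direct but slightly tedious verification using Definitions \ref{redcrossdefi} and \ref{stablecrossdefi}.
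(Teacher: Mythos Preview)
Your proposal is correct and follows essentially the same route as the paper: identify $\mathcal{Z}$ as the intersection of the relevant torsion and torsion-free subcategories via the Moore-complex descriptions of Theorem \ref{teolop} and Corollary \ref{cotnsurj}, then invoke the algebraic models (Theorems \ref{con}, \ref{teom2}, \ref{teomn}) for parts (1) and (3), and Corollary \ref{trivialissemi} for the semi-abelianness in (2). Your observation in (3) that the Peiffer lifting is forced to be $\delta_{n+1}^{-1}([-,-])$ is exactly the paper's argument that setting $\{\,,\,\}=[\,,\,]$ on $id_G$ gives the unique (reduced/stable) $2$-crossed-module structure; and in (2) you have the hereditary/cohereditary hypotheses assigned the right way round for the application of Corollary \ref{trivialissemi}.
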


\begin{proof}
\begin{enumerate}
\item We have that $\mathcal{Z}= \M_{n \geq}\cap \M_{\geq n}$, so the trivial objects have Moore complex of the form:
\begin{equation}\label{hi2}
\begin{tikzcd} \dots \ar[r]& 0\ar[r] & 0\ar[r] & A_n\ar[r] & 0\ar[r] & 0\ar[r] &\dots \end{tikzcd} 
\end{equation}
with $A_n$ an abelian group. From 3) of Theorem \ref{teomn} this category is equivalent to the category of abelian groups.
\item Clearly, $\mathcal{Z}= \M_{m \geq}\cap \M_{\geq n}$ correspond to the simplicial groups with truncated Moore complex above $m$ and under $n$. From Theorem \ref{trivialissemi}, since the torsion theory $\mu_{m \geq}$ is hereditary and  $\mu_{\geq n}$ is cohereditary then $\mathcal{Z}$ is semi-abelian.
\item We have that $\mathcal{Z}= \F_{tr_n} \cap Ker(Cot_n)$, from \ref{cotnsurj} it is clear that is equivalent to the category of simplicial groups with Moore complex of the form:
\begin{equation}\label{hi}
\begin{tikzcd} \dots\ar[r] & 0\ar[r] & 0\ar[r] & M_{n+1}\ar[r, "\cong"] & M_n\ar[r]& 0\ar[r] & 0 \dots\end{tikzcd}.
\end{equation}
We can identify three different cases, the case where the isomorphism occurs in degrees 0,1; the case where it occurs in degrees 1,2; and the general case for $n, n+1$ for $2<n$. On the other hand, let $G$ be a group and consider $id_G:G \to G$, it is straighforward from the Definition \ref{deficroos} that $id_G$ is a crossed module and that the structure of a crossed module is unique for a identity morphism. The category of these crossed modules is equivalent to the category is equivalent to $Ind(Grp)$ of indiscrete simplicial groups. Of course, we have $Ind(Grp) \cong Grp$, this proves the case 0,1. 

Moreover, given $G$, $id_G$ and setting $\{\ , \ \} =[\, ,\, ]$ as the commutator, $G$ is satisfies the equations of a reduced 2-crossed module and those of a stable crossed module (Definitions \ref{redcrossdefi} and \ref{stablecrossdefi}). Also, this structure of reduced 2-crossed module/stable crossed module is also necessarily unique. We can conclude that in any of the three different cases, $\mathcal{Z}$ is equivalent to $Grp$.

\end{enumerate}
\end{proof}

\begin{coro} Consider the pretorsion theories as in \ref{pretorteo}. Then, we have:
\begin{enumerate}
\item In the case of $\mu_{n \geq} \leq \mu_{ \geq n }$, any $\mathcal{Z}$-trivial simplicial group $X$ is isomorphic to the Eilenberg-Mac Lane simplicial group $K(\pi_n(x), n)$.
\item In the case of $\mu_{ \geq n+1} \leq \mu_{ n \geq }$, any  $\mathcal{Z}$-trivial simplicial group $X$ satisfy $\pi_k(X)=0$ for all $k$ (they are aspherical simplicial groups).
\end{enumerate}
\end{coro}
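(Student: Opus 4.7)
The plan is to combine the explicit description of the trivial categories $\mathcal{Z}$ given in Theorem \ref{pretorteo} with the classical identification $\pi_k(X) \cong H_k(N(X))$ of the homotopy groups of a simplicial group with the homology of its Moore complex.

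For part (1), I would start from the fact that a $\mathcal{Z}$-trivial simplicial group $X$ lies in $\M_{n \geq} \cap \M_{\geq n}$, so by diagram (\ref{hi2}) its Moore complex is concentrated in degree $n$ with value an abelian group $A_n$. Taking homology immediately yields $\pi_n(X) = A_n$ and $\pi_k(X) = 0$ for $k \neq n$. To promote this homotopy-level identification to an isomorphism $X \cong K(\pi_n(X), n)$ at the level of simplicial groups, I would invoke Theorem \ref{teomn} (3) (together with its low-degree analogues), which gives $\M_{n \geq} \cap \M_{\geq n} \simeq Ab$; thus $X$ is canonically a simplicial abelian group, and under the Dold-Kan correspondence its Moore complex — concentrated in degree $n$ with value $A_n$ — is by definition $NK(A_n, n)$.

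For part (2), I would take a $\mathcal{Z}$-trivial object for the pair $\mu_{\geq n+1} \leq \mu_{n \geq}$. By diagram (\ref{hi}) its Moore complex consists of an isomorphism $\delta_{n+1}: M_{n+1} \xrightarrow{\cong} M_n$ with all other terms zero. The homology is then immediate: $\pi_k(X) = 0$ for $k \notin \{n, n+1\}$ because the complex vanishes there, while $\pi_{n+1}(X) = \ker(\delta_{n+1}) = 0$ and $\pi_n(X) = M_n / \delta_{n+1}(M_{n+1}) = 0$, both because $\delta_{n+1}$ is an isomorphism. Hence $X$ is aspherical.

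The only mildly delicate step is the object-level identification in (1) — upgrading ``the Moore complex of $X$ agrees with that of $K(\pi_n(X), n)$'' to ``$X \cong K(\pi_n(X), n)$''. This is already handled once one notes that the equivalence of Theorem \ref{teomn} (3) realises $X$ as a simplicial abelian group, at which point Dold-Kan applies, so I do not foresee any real obstacle.
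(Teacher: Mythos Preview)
Your argument is correct and, for part (2), essentially identical to the paper's: both compute $\pi_k(X) = H_k(N(X))$ directly from the shape of the Moore complex in diagram (\ref{hi}).

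For part (1) there is a small difference in packaging. The paper does not pass through Theorem \ref{teomn}(3); it simply cites an external uniqueness result (Lemma 7.4 of \cite{Lop22b}, together with the generalised Dold--Kan theorem of \cite{CaCe91}) asserting that a simplicial group whose Moore complex is concentrated in a single degree is unique up to isomorphism, hence must be $K(\pi_n(X),n)$. Your route recovers exactly this uniqueness from within the paper, via the equivalence $\M_{n\geq}\cap\M_{\geq n}\simeq Ab$: since the equivalence is implemented by $X \mapsto N(X)_n$, any two objects with the same $A_n$ are isomorphic, and $K(A_n,n)$ is one such object. One stylistic remark: the phrase ``thus $X$ is canonically a simplicial abelian group, and under the Dold--Kan correspondence\ldots'' is slightly roundabout, since the conclusion that $X$ is (isomorphic to) a simplicial abelian group is really the \emph{output} of the argument rather than an intermediate step you can feed into classical Dold--Kan. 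It is cleaner to argue directly that the equivalence forces $X\cong K(A_n,n)$, after which the fact that $X$ is simplicial-abelian is a consequence. Either way the logic is sound.
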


\begin{proof}
\begin{enumerate}
\item It is proved in \cite{Lop22b} (Lemma 7.4) that a simplicial group $X$ with a Moore complex as in Diagram \ref{hi2} in Theorem \ref{pretorteo} is unique up to isomorphism. In particular, it is isomorphic to $K(\pi_n(X), n)$ (see also the generalized Dold-Kan Theorem in \cite{CaCe91}).
\item It is well-known that the homotopy groups of a simplicial group $X$ can be calculated with the homology of its Moore complex $M$, $\pi_n(X)\cong H_n(M)$. On the other hand, it is clear that a complex as in the Diagram \ref{hi} in Theorem \ref{pretorteo} has trivial homology.
\end{enumerate}
\end{proof}

\begin{rmk} The Theorem \ref{pretorteo} asserts that from the lattice $\mu(Grp)$, two consecutive torsion theories yield a pretorsion theory with a semi-abelian trivial category $\mathcal{Z}$. Not any pair of torsion theories in $\mu(Grp)$ have this property. A remarkable example is when taking $\mu_{1 \geq} \leq \mu_{0 \geq}$ we have a pretorsion theory $(\mathbb{T}, \mathbb{F})$:
\begin{itemize}
\item where $\mathbb{T}$ is the category $Ker(Cot_0)$ of connected simplicial groups,
\item where $\mathbb{F}$ is the category $\M_{1 \geq}$ of groupoids,
\item where the trivial category $\mathcal{Z}$ is the category of trivial connected groupoids, which is not semi-abelian.
\end{itemize}
\end{rmk}

\section{Torsion theories in reduced crossed complexes} 

Introduced by M. K. Dakin \cite{Dak77}, a T-complex is a Kan simplicial object that admits a canonical filler for horns, for example internal groupoids have this property. Following the work of N. Ashley in \cite{Ash78} and the observations in \cite{CaCe91} a group T-complex (a T-complex in simplicial groups) can be defined as simplicial group $X$ with $M_n \cap D_n=0$ where $M$ is the Moore complex of $X$ and $D$ is the graded subgroup of $X$ generated by the degenerated elements of $X$.

\begin{defi}\cite{Ash78}
A \textit{reduced crossed complex} $M$, or a crossed complex in groups, is a proper chain complex
\[M= \begin{tikzcd} \dots \ar[r]& M_n \ar[r, "\delta_n"]& M_{n-1}\ar[r]& \dots\ar[r] & M_2\ar[r, "\delta_2"] & M_1\ar[r, "\delta_1"] & M_0\end{tikzcd} \]
where  
\begin{enumerate}
\item $M_n$ is abelian for $n \geq 2$;
\item $M_0$ acts on $M_n$ for $n \geq 1$ and the restriction to $\delta_1(M_1)$ acts trivially on $M_n$ for $n \geq 2$;
\item $\delta_n$ preserves the action of $M_0$ and $\delta_1:M_1 \to M_0$ is a crossed module.
\end{enumerate}
A morphism of reduced crossed complexes is a chain complex morphism that preserves all actions. We will write $Crs(Grp)$ for the category of reduced crossed complexes.
\end{defi}

It is proved in \cite{Ash78} that the category of group T-complexes is equivalent to the category of reduced crossed complexes via the Moore normalization. And in \cite{EhPo97} it is shown that the category of reduced crossed complexes is an epi-reflective subcategory of simplicial groups.

In \cite{CaCe91}, the notion of an hypercrossed module is introduced, a hypercrossed module $M$ is a chain complex in groups with group actions 

\[\Phi_i^j : \begin{tikzcd} M_i \ar[r] & Aut(M_j) \end{tikzcd}\] 

and binary operations

\[\Gamma^k_{i,j}: \begin{tikzcd} M_i \times M_j \ar[r] & M_n \end{tikzcd}\]
where the indexes $i,j,k$ are determined by the order introduced by Conduch\'e \cite{Con84} in the set $S(n)$ of surjective maps with domain $[n]$ of the  simplicial category $\Delta$ and this data must satisfy some equations. Crossed modules and 2-crossed modules are hypercrossed modules, indeed, the Peiffer lifting is an example of these binary operations. Furthermore, Ashley's reduced crossed modules are hypercrossed modules where all binary operations are trivial, $\Gamma^k_{i,j}=0$. Also, in \cite{CaCe91} is established an equivalence between simplicial groups and hypercrossed modules, a generalized Dold-Kan Theorem, this expands the equivalences between crossed modules/internal groupoids, 2-crossed modules/simplicial groups with trivial Moore complex above 2 and, finally, reduced crossed complexes/$T$-group complexes.

\begin{coro} The category of Dakin's group $T$-complexes is semi-abelian. Hence, the category $Crs(Grp)$ of reduced crossed complexes is also semi-abelian.
\end{coro}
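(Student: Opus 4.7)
The plan is to prove $Crs(Grp)$ is semi-abelian; the statement for Dakin's group $T$-complexes then follows at once from Ashley's equivalence (via Moore normalization) recalled just above the corollary. Since $Simp(Grp)$ is itself semi-abelian and $Crs(Grp)$ sits inside it by \cite{EhPo97}, my plan is to identify $Crs(Grp)$ as a Birkhoff subcategory of $Simp(Grp)$ --- that is, a full, regular epi-reflective subcategory that is also closed under subobjects and regular quotients. Since Birkhoff subcategories of semi-abelian categories are again semi-abelian, this will close the argument.

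Fullness holds by definition and regular epi-reflectivity is the content of \cite{EhPo97}, so the remaining task is to verify the two closure conditions. For subobjects, I would take a monomorphism $i: X \hookrightarrow Y$ in $Simp(Grp)$ with $Y$ a group $T$-complex and observe that the Moore functor preserves finite limits, giving $N_n(X) = N_n(Y) \cap X_n$, while degenerate generators of $D_n(X)$ map to degenerate elements of $D_n(Y)$, so $D_n(X) \subseteq D_n(Y) \cap X_n$. The containment $N_n(X) \cap D_n(X) \subseteq N_n(Y) \cap D_n(Y) = 0$ then yields the $T$-complex condition for $X$.

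Closure under regular quotients is where the real work lies. Given a level-wise surjection $f: X \twoheadrightarrow Y$ with $X$ a $T$-complex, one can lift any element of $D_n(Y)$ to an element of $D_n(X)$ by lifting each degenerate generator along $f$, but that lift need not sit in $N_n(X)$, so a direct simplicial argument is awkward. My plan is to switch to the algebraic side via the generalized Dold--Kan equivalence of \cite{CaCe91}: under this equivalence $Crs(Grp)$ corresponds to the subvariety of hypercrossed modules cut out by the equations $\Gamma^k_{i,j} = 0$ for all admissible indices. Since hypercrossed modules form a variety of multi-sorted $\Omega$-groups equivalent to $Simp(Grp)$, and the defining equations for $Crs(Grp)$ are preserved by regular quotients of such $\Omega$-groups, closure under regular quotients is automatic.

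The main obstacle I anticipate is precisely this translation between the simplicial characterization $N_n \cap D_n = 0$ and the equational description of reduced crossed complexes inside the hypercrossed module framework; quotient-closure is transparent on the variety side but not on the simplicial side. Once the correspondence is established, Birkhoff-ness of $Crs(Grp) \subseteq Simp(Grp)$ is immediate, and semi-abelianness of both $Crs(Grp)$ and the equivalent category of group $T$-complexes follows.
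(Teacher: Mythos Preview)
Your proposal is correct and follows essentially the same route as the paper: both establish that group $T$-complexes form a Birkhoff subcategory of $Simp(Grp)$ by invoking \cite{EhPo97} for epi-reflectivity and then verifying closure under regular quotients via the hypercrossed-module description of \cite{CaCe91}, namely that the equations $\Gamma^k_{i,j}=0$ pass to quotients. Your separate verification of closure under subobjects is harmless but unnecessary, since any regular epi-reflective subcategory is already closed under subobjects.
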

\begin{proof} From \cite{EhPo97}, we have that the category of group $T$-complexes is a normal epi-reflective subcategory of $Simp(Grp)$. In fact, it is a Birkhoff subcategory so it is semi-abelian. Indeed, we just need to prove that it is closed under regular epimorphism in $Simp(Grp)$. So, let $X$ be a group $T$-complex and a regular epimorphism $f: X \to Y$ in $Simp(Grp)$. The equivalence between crossed complexes and $T$-complexes is given by the Moore normalization, so if $M^X$
and $M^Y$ are the Moore complexes of $X$ and $Y$, it remains to see that since $M^X$ is a reduced crossed complex then so is $M^Y$. The Moore normalization preserves regular epimorphisms, so $N(f):M^X \to M^Y$ is a surjective map component-wise. A morphism of hypercrossed modules (and, thus, of reduced crossed complexes) is a chain complex morphism compatible with the actions $\Phi_i^j$ and the operations $\Gamma^k_{i,j}$. So, we have a commutative diagram
\[\begin{tikzcd}[column sep= large] M^X_i \times M^X_j\ar[d, "N(f) \times N(f)"', two heads]  \ar[r, "\Gamma^{X,k}_{i,j}"] & M^X_k  \ar[d, "N(f)", two heads]\\ M^Y_i \times M^Y_j  \ar[r, "\Gamma^{Y,k}_{i,j}"] & M^Y_k \, . \end{tikzcd} \]

 Then, if $\Gamma^{X,k}_{i,j}=0$, since $M_X$ is a reduced crossed complex, then we have $\Gamma^{Y,k}_{i,j}=0$. Finally, $M^Y$ is a reduced crossed complex and, equivalently, $Y$ is $T$-group complex.
\end{proof}

Our interest in reduced crossed complexes lies in their similar behaviour to chain complexes, thus torsion theories of simplicial groups can be easily studied when restricted to subcategory of $Crs(Grp)$. To this end, we recall some properties of reduced crossed complexes, we refer the reader to \cite{BrHiSi10} for the details and proofs.

\begin{strc}\label{adj}
An $n$-reduced crossed complex is an $n$-truncated chain complex $M$:
 \[M= \begin{tikzcd} M_n \ar[r, "\delta_n"]& M_{n-1}\ar[r]& \dots\ar[r] & M_2\ar[r, "\delta_2"] & M_1\ar[r, "\delta_1"] & M_0\end{tikzcd} \]
satisfying all the axioms of a reduced crossed complex (those that make sense), thus we have a category $Crs(Grp)_{n \geq}$ of $n$-truncated crossed complexes. Clearly, a 1-truncated reduced crossed complex is nothing but a crossed module, thus $ Crs(Grp)_{1 \geq} = \X Mod$.

Moreover, we have the functors for all $n\in \mathbb{N}$:
\begin{itemize}
\item the truncation functor $\mathbf{tr}_n: Crs(Grp) \to Crs(Grp)_{n \geq}$
\[\mathbf{tr}_n(M)=\begin{tikzcd} M_n \ar[r] & M_{n-1}\ar[r] & M_{n-2} \ar[r] & \dots  \end{tikzcd};\]
\item the skeleton functor (or natural embedding) $\mathbf{sk}_n: Crs(Grp)_{n \geq} \to Crs(Grp)$
\[\mathbf{sk}_n(M)= \begin{tikzcd} \dots\ar[r] & 0\ar[r]&0\ar[r] &M_n\ar[r] &M_{n-1}\ar[r] & \dots \end{tikzcd}; \]
\item the coskeleton functor $\mathbf{cosk}_n: Crs(Grp)_{n \geq} \to Crs(Grp)$
\[\mathbf{cosk}_n(M)= \begin{tikzcd} \dots\ar[r] & 0\ar[r]&ker(\delta_n)\ar[r] &M_n\ar[r] &M_{n-1}\ar[r] & \dots \end{tikzcd}; \]
\item the cotruncation $\mathbf{cot}_n: Crs(Grp) \to Crs(Grp)_{n \geq}$
\[\mathbf{cot}_n(M)=\begin{tikzcd} M_n/\delta_{n+1}(M_{n+1}) \ar[r] & M_{n-1}\ar[r] & M_{n-2} \ar[r] & \dots  \end{tikzcd}.\]
\end{itemize}
We will write $\mathbf{Sk}_n=\mathbf{sk}_n\mathbf{tr}_n$, $\mathbf{Cosk}_n=\mathbf{cosk}_n\mathbf{tr_n}$ and $\mathbf{Cot}_n=\mathbf{sk}_n\mathbf{cot}_n$. These functors give a string of adjunctions
\[\mathbf{cot}_n \dashv \mathbf{sk}_n \dashv \mathbf{tr}_n \dashv \mathbf{cosk}_n:  
\begin{tikzcd}[row sep=large] 
Crs(Grp)
\ar[d, bend left, shift right=.5] \ar[d, bend right, shift right=4] \ar[d, phantom, "\dashv"] \ar[d,phantom,"\dashv", shift right=5.5 ] \ar[d,phantom, "\dashv", shift left=5.5 ] \\
Crs(Grp)_{n \geq }  
\ar[u, bend left, shift right=.5] \ar[u, bend right, shift right=4]
\end{tikzcd} \, .\]

It is worth mentioning that the adjunctions of simplicial groups $sk_n \dashv tr_n \dashv cosk_n$ restricted to crossed complexes correspond (under Moore normalization) to $\mathbf{sk}_n \dashv \mathbf{tr}_n \dashv \mathbf{cosk}_n$ and Porter's cotruncation $Cot_n$ corresponds to $\mathbf{Cot}_n$. In addition, unlike the general case of simplicial groups, the adjunction $\mathbf{cot}_n\dashv \mathbf{sk}_n$ holds for reduced crossed complexes.   

On the other hand, it will be useful to write $Crs(Grp)_{\geq n}$ for the subcategory of reduced crossed complexes with $M_i=0$ for $n>i$. For all $n\geq 1$, $Crs(Grp)_{\geq n}$ is equivalent to the category of $chn(Ab)_{\geq n}$ chain complexes of abelian groups. Moreover, we can easily define the dual functors:
\begin{itemize}
\item $\mathbf{tr}'_n:Crs(Grp) \to Crs(Grp)_{\geq n}$;
\item $\mathbf{sk}'_n:Crs(Grp)_{\geq n} \to Crs(Grp)$;
\item $\mathbf{cot}'_n:Crs(Grp) \to Crs(Grp)_{\geq n}$.
\end{itemize}
However, only the adjunction $\mathbf{sk}'_n \dashv \mathbf{cot}'_n$ holds.
\end{strc}

\begin{prop}\label{ttcrs} Let $\mu(Crs(Grp))$ be the lattice given by the restriction of torsion theories in $\mu(Grp)$ to $Crs(Grp)$:
\[ \mu(Crs(Grp)) = \quad \dots \; \leq \quad \mu_{\geq 2}' \quad \leq \quad \mu_{1 \geq}'  \quad \leq \quad \mu_{\geq 1}' \quad \leq \quad \mu_{0 \geq}' \quad \leq \quad Crs(Grp)\, . \]
Then, the torsion theories $\mu'_{n \geq}$ and $\mu'_{ \geq n}$ can be expressed with the functors $\mathbf{cot}_n \dashv \mathbf{sk}_n \dashv \mathbf{tr}_n \dashv \mathbf{cosk}_n$:
\[\mu'_{n \geq}=(Ker(\mathbf{cot}_n) ,Crs(Grp)_{n \geq}), \quad \mu'_{\geq n}=(Crs(Grp)_{\geq n} , \F_{\mathbf{tr}_n} )\, .\]
\end{prop}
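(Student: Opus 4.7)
The plan is to exploit the equivalence (via Moore normalization) between reduced crossed complexes and their corresponding simplicial groups, together with the compatibility between Porter's cotruncation $Cot_n$ and the crossed-complex functor $\mathbf{Cot}_n = \mathbf{sk}_n \mathbf{cot}_n$ pointed out in \ref{adj}. The first step is to verify that each torsion theory $(\T,\F) \in \mu(Grp)$ actually restricts to a torsion theory on $Crs(Grp)$. Since $Crs(Grp)$ is epireflective in $Simp(Grp)$ (by \cite{EhPo97}), it is closed under limits and in particular under kernels of arrows coming from $Simp(Grp)$; and since it is a Birkhoff subcategory, it is closed under regular quotients. Hence, given $X \in Crs(Grp)$, the short exact sequence $0 \to T(X) \to X \to F(X) \to 0$ computing its torsion/torsion-free decomposition in $Simp(Grp)$ stays inside $Crs(Grp)$: the reflector $Cot_n$ restricted to $Crs(Grp)$ agrees with $\mathbf{Cot}_n$ and so lands in $Crs(Grp)$, and $T(X) = \ker(X \to Cot_n(X))$ is then also a reduced crossed complex.

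Second, I would match the torsion-free classes. By Theorem \ref{teolop}, $\mu_{n\geq}$ has torsion-free class $\M_{n\geq}$, consisting of the simplicial groups whose Moore complex vanishes above degree $n$. Under Moore normalization, this is exactly the reduced crossed complexes $M$ with $M_i=0$ for $i>n$, i.e.\ $Crs(Grp)_{n\geq}$. An identical argument gives $\M_{\geq n} \cap Crs(Grp) = Crs(Grp)_{\geq n}$, which is the desired torsion class of $\mu'_{\geq n}$.

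Third, I match the remaining classes through the adjunctions. For $\mu'_{n\geq}$, the torsion class is $Ker(Cot_n) \cap Crs(Grp)$; since on crossed complexes $Cot_n$ agrees with $\mathbf{Cot}_n = \mathbf{sk}_n\mathbf{cot}_n$ and $\mathbf{sk}_n$ is fully faithful and preserves the zero object, $Cot_n(M) = 0$ if and only if $\mathbf{cot}_n(M) = 0$, yielding the torsion class $Ker(\mathbf{cot}_n)$. Dually, the torsion-free class of $\mu_{\geq n}$ is characterised in \ref{teolop} by the monicity of the unit of the adjunction $tr_{n-1} \dashv cosk_{n-1}$; restricting to $Crs(Grp)$ and transporting along the correspondence of $tr_n \dashv cosk_n$ with $\mathbf{tr}_n \dashv \mathbf{cosk}_n$ produces the class $\F_{\mathbf{tr}_n}$ (the index shift is the usual convention).

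The main obstacle is genuinely just bookkeeping rather than a conceptual difficulty: one must spell out carefully that the $Simp(Grp)$-level functors $Cot_n$, $tr_n$, $cosk_n$ preserve the property of being (the simplicial group associated to) a reduced crossed complex, so that their restrictions to $Crs(Grp)$ coincide with $\mathbf{Cot}_n$, $\mathbf{tr}_n$, $\mathbf{cosk}_n$. Once these identifications are in hand, the characterisations of the torsion and torsion-free objects via their Moore complexes from Corollary \ref{cotnsurj} translate directly to the required descriptions.
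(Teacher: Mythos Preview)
Your proposal is correct and follows essentially the same approach as the paper: the paper's proof is a one-sentence sketch invoking the Moore normalization equivalence, the short exact sequences of Theorem~\ref{teolop}, and the identification (recorded in~\ref{adj}) of the simplicial functors $Cot_n$, $tr_n$, $cosk_n$ with their crossed-complex counterparts $\mathbf{Cot}_n$, $\mathbf{tr}_n$, $\mathbf{cosk}_n$. Your write-up simply unpacks these ingredients more carefully, in particular making explicit the Birkhoff closure argument ensuring the torsion short exact sequence stays in $Crs(Grp)$, which the paper leaves implicit.
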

\begin{proof} It follows from the fact that the equivalence between group $T$-complexes and reduced crossed complexes is given by the Moore normalization, and also from the short exact sequences in Theorem \ref{teolop} and the previous observations \ref{adj}. 
\end{proof}

\begin{teo} For $n>1$, let $\mu(Crs(Grp)_{n \geq})$ be the lattice given by restriction of $\mu(Grp)$ to $Crs(Grp)_{n \geq}$:
\[\mu(Crs(Grp)_{n \geq})=  0 \leq \quad \mu_{\geq n}' \quad  \leq \quad  \mu_{n-1 \geq}' \quad \leq \; \dots  \; \leq \quad \mu_{\geq 1}' \quad \leq \quad \mu_{0 \geq}' \quad \leq \quad Crs(Grp)_{n \geq}\, . \]
Then for the torsion categories we have the equivalences:
\begin{enumerate}
\item $Ker(\mathbf{cot}_{n-1})\cap Crs(Grp)_{n \geq}$, the torsion category of $\mu'_{n-1 \geq}$, is equivalent to the category $CExt(Ab)$ of central extension in abelian groups, i.e., surjective morphisms.
\item $Crs(Grp)_{\geq n} \cap Crs(Grp)_{n \geq}$, the torsion category of $\mu'_{\geq n}$, is equivalent to the category $Ab$ of abelian groups.
\end{enumerate}
\end{teo}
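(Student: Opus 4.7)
The plan is to pull the description of the relevant torsion classes in $Simp(Grp)$ back to $Crs(Grp)$ via Proposition \ref{ttcrs}, read off the shape of the Moore complexes from Corollary \ref{cotnsurj}, and then use the defining axioms of a reduced crossed complex (in particular axiom 1, which imposes abelianness from degree $2$ onwards) to conclude. Since $Crs(Grp) \cap \M_{n \geq}= Crs(Grp)_{n \geq}$ and $Crs(Grp) \cap \M_{\geq n}= Crs(Grp)_{\geq n}$ (by the very definitions given in \ref{adj}), the two torsion categories of the statement are simply the intersections described, and Proposition \ref{ttcrs} together with Corollary \ref{cotnsurj} pin down precisely which reduced crossed complexes lie in them.

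For part (2), an object $M$ of $Crs(Grp)_{\geq n}\cap Crs(Grp)_{n \geq}$ satisfies $M_i=0$ for all $i\neq n$, so all differentials and all actions of $M_0=0$ on $M_n$ are forced to be trivial. Since $n>1$, axiom 1 of a reduced crossed complex makes $M_n$ abelian and no further structure survives; the assignment $M\mapsto M_n$ is then clearly functorial, fully faithful and essentially surjective, giving the equivalence with $Ab$.

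For part (1), by Corollary \ref{cotnsurj}(1) transported to $Crs(Grp)$ via the Moore normalization (as in Proposition \ref{ttcrs}), an object $M$ of $Ker(\mathbf{cot}_{n-1})\cap Crs(Grp)_{n \geq}$ has Moore complex of the form
\[
\dots \to 0 \to M_n \xrightarrow{\delta_n} M_{n-1} \to 0 \to \dots
\]
with $\delta_n$ a normal epimorphism. Because $n\geq 2$, axiom 1 of a reduced crossed complex makes $M_n$ abelian. For $n>2$ the same axiom also makes $M_{n-1}$ abelian, and since $M_0=0$ all actions are trivial, so $M$ reduces to the datum of a surjection between abelian groups. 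Conversely any such surjection, placed in degrees $n, n-1$, defines an object of the intersection, and this correspondence is functorial, so we get the claimed equivalence with $CExt(Ab)$.

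The one delicate point is the boundary case $n=2$, where $M_1$ is a priori only a group equipped with a crossed module $\delta_1:M_1 \to M_0=0$. I would handle this by invoking the Peiffer identity in Definition \ref{deficroos}: for any $m,m'\in M_1$ one has $mm'm^{-1}={}^{\delta_1(m)}m'=m'$ since the action of $M_0=0$ is trivial, so $M_1$ is abelian after all. This is the only step of the argument that is not an immediate rewriting of the definitions, and it is the step that ensures uniformity of the statement for all $n>1$; the rest of the proof is then completely analogous to Theorems \ref{teom2} and \ref{teomn}.
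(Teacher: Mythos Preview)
Your proposal is correct and follows exactly the approach the paper intends: the paper's own proof is the single sentence ``The proof is similar to Theorem \ref{teom2},'' and what you have written is precisely that argument spelled out in the reduced crossed complex setting, using Corollary \ref{cotnsurj} to read off the shape of the Moore complex and the axioms of a reduced crossed complex (including the Peiffer identity for the boundary case $n=2$) to force abelianness.
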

\begin{proof} The proof is similar to Theorem \ref{teom2}.
\end{proof}

\section{Torsion torsion-free theories}

\begin{defi} A \textit{torsion torsion-free theory} in $\X$, or TTF theory, is a triplet $(\C, \T, \F)$ of full subcategories of $\X$ such that $(\C, \T)$ and $(\T, \F)$ are torsion theories in $\X$. A subcategory $\T$  of $\X$ is called a torsion torsion-free category (or a TTF category) if there are subcategories $\C$ and $\F$ such that $(\C, \T, \F)$ is a TTF theory.
\end{defi}

TTF theories were introduced in \cite{jans65} with applications mainly to categories of modules over rings, in particular they are used to study when any object of a category is a joint of torsion subobjects for different torsion theories. TTF theories have been studied in different non-abelian settings, for example in triangulated categories in \cite{BeRe07}. In our context of simplicial groups, we present examples of TTF theories in a weak sense.

\subsection{TTF theories in chain complexes}

Let $\X$ be a semi-abelian category and $chn(\X)$ and $pch(\X)$ the categories of chain complexes and proper chain complexes, respectively. We will write $chn(\X)_{n \geq}$ and $chn(\X)_{\geq n}$ for the category of chain complexes bounded above/below $n$.

We have the adjunctions:
\[ \mathbf{cot}_n \dashv \mathbf{sk}_n \dashv \mathbf{tr}_n \dashv \mathbf{cosk}_n:  
\begin{tikzcd}[row sep=large] 
\chn 
\ar[d, bend left, shift right=.5] \ar[d, bend right, shift right=4] \ar[d, phantom, "\dashv"] \ar[d,phantom,"\dashv", shift right=5.5 ] \ar[d,phantom, "\dashv", shift left=5.5 ] \\
\chn_{n \geq }
\ar[u, bend left, shift right=.5] \ar[u, bend right, shift right=4]
\end{tikzcd} ;\] 
as well as their duals:
\[ \mathbf{cosk}'_n \dashv \mathbf{tr}'_n \dashv \mathbf{sk}'_n \dashv \mathbf{cot}'_n: 
\begin{tikzcd}[row sep=large]
\chn_{ \geq n} 
\ar[d, bend left, shift right=.5] \ar[d, bend right, shift right=4] \ar[d, phantom, "\dashv"] \ar[d,phantom,"\dashv", shift right=5.5 ] \ar[d,phantom, "\dashv", shift left=5.5 ] \\
\chn 
\ar[u, bend left, shift right=.5] \ar[u, bend right, shift right=4]
\end{tikzcd} .
\]
defined similarly as in \ref{adj}. These adjunctions can be restricted to adjunctions between $pch(\X)$ and $pch(\X)_{n \geq}$ (or $pch(\X)_{\geq n}$ respectively).

Similar to simplicial groups, torsion theories are given by the cotruncation functors $\mathbf{cot}$ and $\mathbf{cot}'$ as follows.

\begin{teo}\label{ttchcompl} \cite{Lop22b} Let $\X$ be a semi-abelian category, then we have: 
\begin{enumerate}
\item in $chn(\X)$ we have a torsion theory $(chn(\X)_{\geq n}, \F_{\mathbf{tr}_{n-1}})$:
 \[\begin{tikzcd}[column sep= large, row sep=huge] \chn_{\geq n}\ar[r, bend left, "\mathbf{sk}_n'" above]\ar[r, phantom, "\perp"]  & \chn \ar[l, bend left, "\mathbf{cot}_n'" below]\ar[r, bend left]\ar[r,phantom, "\perp"] &  \F_{\mathbf{tr_{n-1}}} \ar[l, bend left] \end{tikzcd} ; \]
\item the previous torsion theory is restricted to  a torsion theory in $pch(\X)$ as $(pch(\X)_{\geq n}, \mathcal{MN}_n)$:
\[\begin{tikzcd}[column sep= large, row sep=huge]  \pch_{ \geq n}\ar[r, bend left, "\mathbf{sk}_n'" above]\ar[r, phantom, "\perp"]  & \pch \ar[l, bend left, "\mathbf{cot}_n'" below]\ar[r, bend left]\ar[r,phantom, "\perp"] &  \mathcal{MN}_n  \ar[l, bend left] \end{tikzcd} ; \]
\item the category $chn(\X)_{n \geq}$ is a normal epi-reflective subcategory of $chn(\X)$, but it is not a torsion-free subcategory ;
\item in $pch(\X)$ we have a torsion theory $(\mathcal{EP}_n, pch(\X)_{n-1 \geq})$:
\[\begin{tikzcd}[column sep= large, row sep=huge]  \mathcal{EP}_n\ar[r, bend left]\ar[r, phantom, "\perp"] & \pch \ar[l, bend left ]\ar[r, bend left, "\mathbf{cot}_{n-1}" above]\ar[r,phantom, "\perp"] &  \pch_{n-1\geq } \ar[l, bend left, "\mathbf{sk}_{n-1}" below] \end{tikzcd} ;\]
\end{enumerate}
where 
\begin{itemize}
\item $\F_{\mathbf{tr}_{n-1}}$ is the subcategory of chain complexes such that their component of the unit of $\mathbf{tr}_{n-1}\dashv \mathbf{cosk}_{n-1}$ is monic. 
\item $\mathcal{MN}_n$ is the subcategory of  proper chain complexes such that $M_i=0$ for $i>n$ and the differential $\delta_n$ is monic. 
\item $\mathcal{EP}_n$ is the subcategory of  proper chain complexes such that $M_i=0$ for $n-1>i$ and the differential $\delta_n$ is epic.
\end{itemize}
For a chain complex $M$, the short exact sequences of the torsion theories in 2) and 3) are given as in diagrams (3) and (4) in Theorem \ref{teolop}, respectively.
\end{teo}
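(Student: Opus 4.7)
The plan is to derive all four items from a single construction: the vertical short exact sequence obtained by the normal epi/mono factorization of a single differential, which simultaneously exhibits the torsion sequence and the formulas for the reflector/coreflector.

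For part 1, given $M \in chn(\X)$, I would construct the short exact sequence $0 \to T \to M \to F \to 0$ where $T$ agrees with $M$ in degrees $>n$, has $T_n = Ker(\delta_n)$ and is zero below, while $F$ is zero above $n$, has $F_n = \delta_n(M_n)$ (the image), agrees with $M$ in degrees $\le n-1$, and has differential $F_n \to F_{n-1}$ given by the mono part $m_n$. Each vertical slice $Ker(\delta_n) \hookrightarrow M_n \twoheadrightarrow \delta_n(M_n)$ is an SES in $\X$ by semi-abelianness, so $0 \to T \to M \to F \to 0$ is an SES in $chn(\X)$. Clearly $T \in chn(\X)_{\geq n}$, and $F \in \F_{\mathbf{tr}_{n-1}}$ because the unit $F \to \mathbf{Cosk}_{n-1}(F)$ is componentwise monic, the only nontrivial degree being $n$, where the unit equals $m_n$. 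For TT1, any morphism $f: T \to F$ is supported in degree $n$, and the chain condition $\delta_n^F f_n = f_{n-1}\delta_n^T = 0$ forces $f_n=0$ since $\delta_n^F=m_n$ is monic.

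For part 2, I would observe that when $M \in pch(\X)$ the construction restricts: the new top differential of $T$ has image $\delta_{n+1}(M_{n+1})$, normal in $M_n$ by properness of $M$ and hence normal in the intermediate subgroup $Ker(\delta_n)$; and the new top differential $m_n$ of $F$ is a normal monomorphism by properness. Thus $T, F \in pch(\X)$ and $F \in \mathcal{MN}_n$. For part 4, the dual construction produces the torsion SES: take $F = \mathbf{cot}_{n-1}(M) = M_{n-1}/\delta_n(M_n) \to M_{n-2} \to \cdots$, well-defined in $pch(\X)$ since $\delta_n(M_n)$ is normal, and $T = \cdots \to M_{n+1} \to M_n \twoheadrightarrow \delta_n(M_n) \to 0$ with top differential the regular epi $e_n$; then $T \in \mathcal{EP}_n$, $F \in pch(\X)_{n-1\geq}$, and TT1 (forced by $e_n$ being epic) is routine.

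For part 3, the adjunction $\mathbf{cot}_n \dashv \mathbf{sk}_n$ realizes $chn(\X)_{n\geq}$ as a reflective subcategory of $chn(\X)$, where $\mathbf{cot}_n M$ replaces $M_n$ by $M_n/\overline{\delta_{n+1}(M_{n+1})}$ (with $\overline{\phantom{x}}$ the normal closure) and kills higher degrees; each component of the unit is a normal epimorphism. However, the subcategory is not torsion-free: any torsion class would be forced by reflectivity to be $Ker(\mathbf{cot}_n)$, which fails to contain $Ker(\eta_M)$ in general. A concrete witness in $chn(Grp)$ is $\delta_{n+1}:\mathbb{Z}/2 \to D_4$ sending the generator to a reflection $s$ in the dihedral group of order $8$: the normal closure of $\langle s\rangle$ in $D_4$ is the Klein four subgroup $V=\{1,s,r^2,sr^2\}$, so $Ker(\eta_M)$ has $V$ at degree $n$; but $V$ is abelian, so the normal closure of $\langle s\rangle$ inside $V$ is $\langle s\rangle\neq V$, whence $\mathbf{cot}_n(Ker(\eta_M))\neq 0$ and the natural SES cannot be a torsion sequence. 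The principal obstacle is precisely this negative half of part 3 — parts 1, 2 and 4 unwind directly from the normal epi/mono factorization, whereas ruling out torsion-freeness requires both the observation that any competing torsion class is forced by reflectivity and a small group-theoretic counterexample.
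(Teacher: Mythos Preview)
The paper does not supply its own proof of this theorem: it is quoted verbatim from \cite{Lop22b} and only the statement is reproduced, together with the reference to diagrams (3) and (4) of Theorem~\ref{teolop} for the shape of the short exact sequences. There is therefore nothing in the present paper to compare your argument against directly.

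That said, your proof is correct and is exactly the argument one would expect the cited reference to contain. Your short exact sequences for parts 1, 2 and 4 are precisely the chain-complex analogues of diagrams (3) and (4), and your verifications of TT1 (using that $m_n$ is monic, respectively that $e_n$ is epic) and of properness under restriction are routine and accurate. For part 3 your reasoning is also sound: once one knows $chn(\X)_{n\geq}$ is normal epi-reflective via the cokernel/normal-closure construction, the only possible torsion partner is $Ker(\mathbf{cot}_n)$, and your $\mathbb{Z}/2 \to D_4$ example cleanly shows $Ker(\eta_M)\notin Ker(\mathbf{cot}_n)$, so no torsion theory exists. The one point worth flagging is purely expository: the paper's later description of $\mathbf{cot}_n$ (in the crossed-complex setting) omits the normal closure because there the complexes are already proper; your use of the normal closure in the general $chn(\X)$ setting is the correct formulation and is what makes the reflection normal-epi.
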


In addition to this torsion theories, we can add the following.

\begin{teo} Let $\X$ be a semi-abelian category. For each $n \in \Z$ the pair $(\chn_{n-1 \geq}, \chn_{\geq n})$ is a hereditary cohereditary torsion theory in $\chn$. Moreover, the reflector and coreflector are given by $\mathbf{sk}_{n-1} \dashv \mathbf{tr}_{n-1}$ and $\mathbf{tr}'_n \dashv \mathbf{sk}'_n$:
\[\begin{tikzcd}[column sep= large, row sep=huge] 
\chn_{n-1 \geq}\ar[r, bend left, "\mathbf{sk}_{n-1}" above]\ar[r, phantom, "\perp"]  
& \chn \ar[l, bend left, "\mathbf{tr}_{n-1}" below]\ar[r, bend left, "\mathbf{tr}'_n"]\ar[r,phantom, "\perp"] 
&  \chn_{\geq n} \ar[l, bend left, "\mathbf{sk}'_n"] \end{tikzcd} . \]
\end{teo}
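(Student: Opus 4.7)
The plan is to exhibit the torsion theory by brutal truncation: for $M \in \chn$ with differentials $\delta_i\colon M_i \to M_{i-1}$, set
\[
T(M) := \mathbf{sk}_{n-1}\mathbf{tr}_{n-1}(M), \qquad F(M) := \mathbf{sk}'_n \mathbf{tr}'_n(M),
\]
so $T(M)$ agrees with $M$ in degrees $\leq n-1$ and is zero above, while $F(M)$ agrees with $M$ in degrees $\geq n$ and is zero below. I would begin by checking TT1: any morphism $f\colon T \to F$ with $T \in \chn_{n-1 \geq}$ and $F \in \chn_{\geq n}$ has $f_i\colon T_i \to F_i$ with either $T_i = 0$ (when $i \geq n$) or $F_i = 0$ (when $i < n$), so $f_i = 0$ for every $i$ and $f$ is the zero morphism.

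Next I would verify TT2 by producing the short exact sequence
\[
\begin{tikzcd} 0 \ar[r] & T(M) \ar[r, "\epsilon_M"] & M \ar[r, "\eta_M"] & F(M) \ar[r] & 0 \end{tikzcd}
\]
where $\epsilon_M$ is the identity in degrees $\leq n-1$ and zero above, and $\eta_M$ is the identity in degrees $\geq n$ and zero below. Compatibility with differentials is trivial away from the boundary, while at the boundary the only potentially problematic differential is $\delta_n\colon M_n \to M_{n-1}$, which is killed on the $T(M)$ side (codomain is zero in $T$ at degree $n$) and on the $F(M)$ side (codomain is zero in $F$ at degree $n-1$). The sequence is short exact degree-wise, and since short exactness of chain complexes is detected degree-wise, it is short exact in $\chn$.

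The identification of the reflector and coreflector is then automatic from the adjunctions $\mathbf{sk}_{n-1} \dashv \mathbf{tr}_{n-1}$ and $\mathbf{tr}'_n \dashv \mathbf{sk}'_n$ recalled in \ref{adj} (the analogue for chain complexes being established verbatim, since the embeddings $\mathbf{sk}_{n-1}$ and $\mathbf{sk}'_n$ are fully faithful and the counits/units coincide with $\epsilon_M$ and $\eta_M$). Thus the coreflector of $\chn_{n-1 \geq}$ is $\mathbf{sk}_{n-1}\mathbf{tr}_{n-1}$ with normal-mono counit $\epsilon_M$, and the reflector onto $\chn_{\geq n}$ is $\mathbf{sk}'_n\mathbf{tr}'_n$ with normal-epi unit $\eta_M$, confirming the torsion-theoretic structure.

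Finally, hereditariness and cohereditariness are immediate from the degree-wise nature of (regular/normal) monomorphisms and epimorphisms in $\chn$: a monomorphism $M \hookrightarrow T$ with $T \in \chn_{n-1 \geq}$ forces $M_i \hookrightarrow 0$ for $i \geq n$, and dually a normal epimorphism $F \twoheadrightarrow Q$ with $F \in \chn_{\geq n}$ forces $Q_i = 0$ for $i < n$. The main ``obstacle'' is merely bookkeeping at the boundary degree $n$, ensuring that the brutal truncation produces a genuine short exact sequence of chain complexes and that the two one-sided adjunctions assemble into the claimed coreflector–reflector pair; no deeper input from the semi-abelian structure of $\X$ is required beyond the existence of images, kernels and cokernels used degree-wise.
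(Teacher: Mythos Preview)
Your proposal is correct and follows essentially the same approach as the paper: both verify TT1 by the degree-wise disjointness of supports and TT2 via the brutal truncation short exact sequence computed component-wise. You supply more detail (the boundary check on $\delta_n$, the explicit hereditary/cohereditary argument, and the identification of the adjunctions), but the underlying argument is identical.
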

\begin{proof} For $X$ in $\chn_{n-1 \geq}$ and $Y$ in $\chn_{\geq n}$ it is clear that a morphism $\mathbf{sk}_{n-1}(X) \to \mathbf{sk}'_n(Y)$ must be trivial:
\[\begin{tikzcd}[column sep=scriptsize, row sep=scriptsize]  
\mathbf{sk}_{n-1}(X)\ar[d]= & \dots\ar[r] &  0\ar[r] \ar[d]& 0 \ar[r] \ar[d]& X_{n-1}\ar[r]\ar[d] & X_{n-2} \ar[d]
\\
\mathbf{sk}'_n(Y)= & \dots \ar[r] & Y_{n+1}\ar[r] & Y_n \ar[r] &  0 \ar[r]& 0 \, .
\end{tikzcd}  \]
Since limits and colimits are computed component-wise in $\chn$, the short exact sequence of the torsion theory for a chain complex $X$ in $\chn$ is given by:
\[ \begin{tikzcd}[column sep=scriptsize, row sep=scriptsize]
 \dots \ar[r] &  0\ar[r] \ar[d]& 0 \ar[r] \ar[d]& X_{n-1}\ar[r]\ar[d] & X_{n-2} \ar[d]\ar[r] & \dots
\\
 \dots \ar[r] & X_{n+1}\ar[r] \ar[d] & X_n \ar[r]\ar[d] &  X_{n-1} \ar[r] \ar[d]& X_{n-2}\ar[r]\ar[d] & \dots
\\
 \dots \ar[r] & X_{n+1}\ar[r] & X_n \ar[r] &  0 \ar[r] & 0 \ar[r]& \dots
\end{tikzcd}  \]
\end{proof}

\begin{coro}\label{ttfpch} Let $\X$ be semi-abelian category. For each $n \in \Z$ the triplet of full subcategories
 \[(\chn_{n-1 \geq}, \chn_{\geq n}, \F_{\mathbf{tr}_{n-1}})\]   is a TTF theory in $\chn$. Moreover, by restriction  this determines the TTF theory in $\pch$
 \[(\pch_{n-1 \geq}, \pch_{\geq n}, \mathcal{MN}_n).\]
Similarly, the triplet of subcategories in $\chn$ 
\[(Ker(\mathbf{cot}_{n-1 \geq}),\chn_{n-1 \geq} , \chn_{\geq n} )\]
determines the TTF theories in $\pch$
 \[  (\mathcal{EP}_n, \pch_{n-1 \geq}, \pch_{ \geq n}).\]
\end{coro}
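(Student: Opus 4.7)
The plan is to verify each claim directly from the definition: a triplet $(\C, \T, \F)$ is a TTF theory precisely when $(\C, \T)$ and $(\T, \F)$ are both torsion theories, so in each case I need only to point to two previously established torsion theories that share a common middle term. This reduces the corollary to a bookkeeping exercise combined with a check that the relevant torsion theories restrict from $\chn$ to $\pch$.

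For the first TTF $(\chn_{n-1 \geq}, \chn_{\geq n}, \F_{\mathbf{tr}_{n-1}})$ in $\chn$, the preceding theorem gives $(\chn_{n-1 \geq}, \chn_{\geq n})$ and item 1 of Theorem \ref{ttchcompl} gives $(\chn_{\geq n}, \F_{\mathbf{tr}_{n-1}})$; the middle term $\chn_{\geq n}$ agrees, so the TTF condition is immediate. For the second TTF $(\pch_{n-1 \geq}, \pch_{\geq n}, \mathcal{MN}_n)$ in $\pch$, item 2 of Theorem \ref{ttchcompl} supplies $(\pch_{\geq n}, \mathcal{MN}_n)$, and I would observe that the torsion theory $(\chn_{n-1 \geq}, \chn_{\geq n})$ restricts to $(\pch_{n-1 \geq}, \pch_{\geq n})$ in $\pch$: the associated short exact sequence for an $X$ in $\chn$ is obtained by brute truncation (zeroing all $X_i$ with $i \geq n$ on the torsion side and all $X_i$ with $i < n$ on the torsion-free side), and this leaves every surviving differential of a proper complex unchanged, hence both factors remain proper. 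For the third TTF $(\mathcal{EP}_n, \pch_{n-1 \geq}, \pch_{\geq n})$ in $\pch$, I combine item 4 of Theorem \ref{ttchcompl}, which yields the torsion theory $(\mathcal{EP}_n, \pch_{n-1 \geq})$, with the restriction $(\pch_{n-1 \geq}, \pch_{\geq n})$ established above; once again the middle term matches.

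The main point that requires a brief justification, rather than being purely formal, is why the triplet $(Ker(\mathbf{cot}_{n-1}), \chn_{n-1 \geq}, \chn_{\geq n})$ is written as living in $\chn$ but is only asserted as a TTF in $\pch$. This is because item 3 of Theorem \ref{ttchcompl} tells us that $\chn_{n-1 \geq}$ is not torsion-free in $\chn$: the reflection $X \to \mathbf{Cot}_{n-1}(X)$ quotients by the image $\delta_n(X_n)$, which for a general chain complex need not be a normal subobject, so the reflection is not a normal epimorphism. Upon restricting to $\pch$, properness of $X$ forces this image to be normal, the reflection becomes a normal epi, and $(\mathcal{EP}_n, \pch_{n-1 \geq})$ is a genuine torsion theory by item 4 of Theorem \ref{ttchcompl} — which is precisely what allows the TTF to emerge on the proper side. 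Apart from this subtlety about restriction versus the ambient category, the corollary is a direct consequence of Theorem \ref{ttchcompl} and the preceding theorem.
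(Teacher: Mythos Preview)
Your proposal is correct and follows exactly the approach the paper intends: the corollary is stated without proof because it is meant to be read off directly from Theorem \ref{ttchcompl} together with the preceding theorem, by matching middle terms as you do. Your additional remark about why the third triplet only yields a TTF after restricting to $\pch$ (via item 3 of Theorem \ref{ttchcompl}) is a helpful clarification the paper leaves implicit.
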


\subsection{Weak TTF theories in chain complexes with operations}

Through this work we have mentioned the similarities between torsion theories in simplicial groups and in chain complexes, this happens since they are both defined with similar set of adjunctions ($sk_n\dashv tr_n \dashv cosk_n$ and  $\mathbf{cot}_n \dashv \mathbf{sk}_n \dashv \mathbf{tr}_n \dashv \mathbf{cosk}_n$). However, the TTF theories in chain complexes cannot be easily adapted to the simplicial case, not even in the initial example of internal groupoids. For instance, in $Grpd(Grp)$ the subcategory of discrete groupoids $Dis(Grp) \cong \mathcal{M}_{0 \geq}$ is a torsion-free subcategory (with reflector $\pi_0\dashv Dis$) and also mono-coreflective (with the adjunction $Dis \dashv tr_0$) but not normal mono-coreflective, so it is not a torsion subcategory. In general, the subcategories $\mathcal{M}_{ n \geq}$ are only torsion-free. 

Two different kinds of torsion theories are introduced.

\begin{defi}
 For a class of objects $\mathcal{E}$ of $\X$, a pair $(\T, \F)$ of full subcategories of $\X$ will be called a $\mathcal{E}$-torsion theory or a torsion theory relative to the class $\mathcal{E}$ if:
\begin{itemize}
\item[TT1] for all $X \in \T$ and $Y \in \F$, every morphism $f:X \to Y$ is zero;  
\item[TT2'] for every object $X \in \mathcal{E}$ exists a short exact sequence
\[\begin{tikzcd} 0\ar[r] & T_X\ar[r, "t_X"] & X\ar[r, "f_X"] & F_X\ar[r] &0 \end{tikzcd}\]
with $T_X \in \T$ and $F_X \in \F$.
\end{itemize}
\end{defi}

As a first example of an $\mathcal{E}$-torsion theory we have the pair $(Ker(\mathbf{cot}_n), \chn_{n \geq})$ in $chn(\X)$ as in Theorem \ref{ttchcompl}.

\begin{lemma} In $\chn$ the category of chain complexes the pair 
\[(Ker(\mathbf{cot}_n), \chn_{n \geq})\] and $\mathcal{E}$ the class of proper chain complexes $\pch$ is a $\mathcal{E}$-torsion theory in $\chn$.
\end{lemma}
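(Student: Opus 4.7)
The plan is to check the two axioms of an $\mathcal{E}$-torsion theory with $\mathcal{E}=\pch$, translating the simplicial argument of Theorem \ref{teolop} to the chain-complex setting. I first unpack the subcategories: because $\mathbf{cot}_n(M)$ is the complex obtained by replacing $M_n$ with $M_n/\delta_{n+1}(M_{n+1})$ and killing all degrees $<n$, a complex lies in $Ker(\mathbf{cot}_n)$ iff $M_i=0$ for $i<n$ and $\delta_{n+1}$ is a regular (equivalently, normal) epimorphism, while $\chn_{n\geq}$ consists of complexes with $M_i=0$ for $i>n$. For TT1, a chain map $f\colon T\to F$ between a torsion and a torsion-free object has $f_i=0$ for $i\neq n$ by the vanishing conditions; at degree $n$, chain-map compatibility gives $f_n\circ\delta_{n+1}^T=\delta_{n+1}^F\circ f_{n+1}=0$, and the normal-epi hypothesis on $\delta_{n+1}^T$ forces $f_n=0$ as well.

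For axiom TT2' I reproduce diagram (\ref{sescok}) directly in $\chn$. For $X\in\pch$ the regular-epi/mono factorization $\delta_{n+1}^X=m_{n+1}\circ e_{n+1}$ has $m_{n+1}\colon\delta_{n+1}(X_{n+1})\to X_n$ a \emph{normal} monomorphism, precisely by properness. Define
\[T_X=(\dots\to X_{n+2}\to X_{n+1}\xrightarrow{e_{n+1}}\delta_{n+1}(X_{n+1})\to 0\to\dots),\]
\[F_X=(\dots\to 0\to Cok(\delta_{n+1})\to X_{n-1}\to X_{n-2}\to\dots),\]
the lower differential of $F_X$ being the unique factorization of $\delta_n^X$ through $cok(\delta_{n+1})$, which exists because $\delta_n^X\circ\delta_{n+1}^X=0$. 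By construction $T_X\in Ker(\mathbf{cot}_n)$ and $F_X\in\chn_{n\geq}$, and the canonical comparison maps (identity above $n+1$, $m_{n+1}$ respectively $cok(\delta_{n+1})$ at degree $n$, identity below $n-1$) assemble into a sequence $0\to T_X\to X\to F_X\to 0$ in $\chn$.

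Since kernels and cokernels in $\chn$ are computed componentwise, short-exactness reduces to degree-by-degree checks; all degrees other than $n$ are trivial, and at degree $n$ the sequence becomes
\[0\to \delta_{n+1}(X_{n+1})\xrightarrow{m_{n+1}}X_n\xrightarrow{cok(\delta_{n+1})}Cok(\delta_{n+1})\to 0,\]
which is short exact exactly because $m_{n+1}$ is a normal monomorphism, i.e.\ the kernel of its own cokernel. This step is both the crux of the proof and the reason one must restrict to the class $\mathcal{E}=\pch$: for a non-proper $X$ the image $m_{n+1}$ need not be normal mono, so the candidate $F_X$ fails to be the cokernel of $T_X\to X$ and no such SES exists. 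Assembling these observations yields TT1 and TT2' relative to $\pch$, proving the claim.
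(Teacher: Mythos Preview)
Your argument is essentially the paper's: the same direct check for TT1, and for TT2' you spell out the short exact sequence of diagram (\ref{sescok}), whereas the paper simply invokes the torsion theory in $\pch$ already recorded in Theorem \ref{ttchcompl}(4). There is one imprecision to correct: your characterization of $Ker(\mathbf{cot}_n)$ is slightly too narrow---the condition on $\delta_{n+1}$ is that it have \emph{trivial cokernel}, not that it be a regular epimorphism, and in a non-abelian $\X$ these differ (e.g.\ the inclusion $\langle(12)\rangle\hookrightarrow S_3$ has trivial cokernel without being surjective). As written, your TT1 argument therefore covers only a proper subclass of $Ker(\mathbf{cot}_n)$; the fix is immediate, since from $f_n\circ\delta_{n+1}^T=0$ one concludes $f_n=0$ by factoring through $cok(\delta_{n+1}^T)=0$ rather than by cancelling an epimorphism, exactly as the paper does. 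Your TT2' construction is unaffected, as the $T_X$ you build has $\delta_{n+1}^{T_X}=e_{n+1}$ a regular epimorphism and hence lies in $Ker(\mathbf{cot}_n)$ under either description. (Also, a typo: $\mathbf{cot}_n$ kills degrees \emph{above} $n$, not below.)
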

\begin{proof} The objects in $Ker(\mathbf{cot}_n)$ are the chain complexes $X$ such that $X_i=0$ for $n> i$ and the differential $\delta_{n+1}$ has a trivial cokernel. Thus, to verify TT1 it suffices to notice that given a commutative diagram
\[\begin{tikzcd} X_{n+1}\ar[r, "\delta_{n+1}"]\ar[d, "f_{n+1}"] & X_n\ar[d, "f_n"] \\  0\ar[r] & Y_n \end{tikzcd}\]
with $\delta_{n+1}$ a morphism with trivial cokernel then the morphism $f$ must be trivial. 
TT2' holds since it has been established that the restriction of the pair $(Ker(\mathbf{cot}_n), \chn_{n \geq})$ to proper chain gives a torsion theory $(\mathcal{EP}_n, \pch_{n \geq})$ in $\pch$. In addition, for a proper chain complex $M$ the short exact sequence is given by diagram  (3) in Theorem \ref{teolop}.
\end{proof}

Any normal epireflective subcategory yields a $\mathcal{E}$-torsion theory. Let $F \dashv I: \X \to \A$ be a normal epireflective subcategory of $\X$ with unit $\eta$. Following \cite{BouGrn06}, we consider two subcategories of $\X$ :
\[\T_\F=\{T \mid T \cong ker(\eta_X) \; \mbox{for some} \; X\} \]
and
\[Ker(F)= \{X \mid F(X)=0 \}.\] 
Clearly, we have $Ker(F) \subseteq \T_\F$. The pair $(\T_\F , \F)$ satisfies axiom TT2 of a torsion theory while the pair $(Ker(F), \F)$ satisfy axiom TT1. Indeed, if $Ker(F)= \T_\F$ we have a torsion theory. Thus, in the relative case we have the following:

\begin{lemma}   Let $F \dashv I: \X \to \A$ be a normal epireflective subcategory of $\X$ with unit $\eta$. If $\mathcal{E}=\{X \mid F(ker(\eta_X))=0 \}$ then the pair $(Ker(F), \F)$ is a $\mathcal{E}$-torsion theory.
\end{lemma}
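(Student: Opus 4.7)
The approach is to read off the two axioms directly from the adjunction $F \dashv I$ and the normality of the unit. The verification of TT1 uses only the universal property of the reflector, while TT2' is built from the canonical kernel/unit short exact sequence, whose membership in $Ker(F)$ is exactly the defining condition of the class $\mathcal{E}$.

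For TT1, I would take $T \in Ker(F)$ and $Y \in \F$. Since $\F$ is the essential image of the fully faithful inclusion $I$, the unit $\eta_Y \colon Y \to IF(Y)$ is an isomorphism. Then the adjunction $F \dashv I$ gives
\[ \mathrm{Hom}_\X(T, Y) \;\cong\; \mathrm{Hom}_\X(T, IF(Y)) \;\cong\; \mathrm{Hom}_\A(F(T), F(Y)) \;=\; \mathrm{Hom}_\A(0, F(Y)),\]
since $F(T) = 0$ by definition of $Ker(F)$. Because $F$ is a left adjoint it preserves the initial/zero object, so this Hom-set contains only the zero morphism, and hence every $f \colon T \to Y$ factors through $0$.

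For TT2', given $X \in \mathcal{E}$, I would use the canonical sequence associated with the reflection,
\[\begin{tikzcd} 0\ar[r] & \ker(\eta_X)\ar[r] & X\ar[r, "\eta_X"] & IF(X)\ar[r] & 0,\end{tikzcd}\]
which is a short exact sequence in $\X$ because $F \dashv I$ is a \emph{normal} epireflection, so $\eta_X$ is a normal epimorphism and its kernel exists. The right-hand term $IF(X)$ is in $\F$ by construction, and the left-hand term lies in $Ker(F)$ precisely because $F(\ker(\eta_X)) = 0$, which is exactly the definition of $X \in \mathcal{E}$. Thus both endpoints lie in the prescribed subcategories, establishing TT2'.

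I do not expect a genuine obstacle here: the statement is essentially a tautology once one sets up the kernel/unit sequence and applies the adjunction. The only place that requires a small justification is that the Hom-set argument in TT1 does not need $X$ itself to be in $\mathcal{E}$; TT1 must hold for \emph{all} $T \in Ker(F)$ and $Y \in \F$, and indeed it does, since the argument uses only $F(T) = 0$ and $\eta_Y$ being iso. So the relative restriction really only appears in TT2', exactly as one would expect from the comparison with the inclusion $Ker(F) \subseteq \T_\F$ recalled just before the lemma.
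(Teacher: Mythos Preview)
Your proof is correct and follows exactly the approach the paper indicates in the paragraph preceding the lemma (the paper itself omits a formal proof): TT1 holds for $(Ker(F),\F)$ by the adjunction, and TT2' is obtained from the canonical short exact sequence $0\to \ker(\eta_X)\to X\to IF(X)\to 0$, whose left term lies in $Ker(F)$ precisely when $X\in\mathcal{E}$. Your added remark that TT1 requires no restriction to $\mathcal{E}$ is also in line with the paper's observation that $(Ker(F),\F)$ already satisfies TT1 unconditionally.
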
 

\begin{exmp} The category of $Ab$ of abelian groups is a normal epireflective subcategory of $Grp$ (in fact, it is a Birkhoff subcategory) where the reflector is the abelianization functor $ab(G)=G/G'$ where $G'$ is the commutator subgroup. Hence, if $Perf$ is the category of perfect groups, groups $G$ such that $G'=G$, then the pair $(Perf, Ab)$ is a $\mathcal{E}$-torsion theory with respect the class $\mathcal{E}$ of groups such that $(G')'=G'$ or, equivalently, groups with a perfect commutator. 
\end{exmp}

We introduce our main definition.

\begin{defi}
Let $(\T, \F)$ be a torsion theory in $\X$, if $\F$ is a mono-coreflective category of $\X$ we will call $(\T, \F)$ a \textit{CTF theory}. This means, that the embedding  $I$ of $\F$ into $\X$ has both a right and left adjoint, $F\dashv I \dashv C$:
\[\begin{tikzcd}[column sep=large] \T \ar[r, bend left, "J"]\ar[r, phantom, "\perp"] & \X \ar[r, bend left, "F"]\ar[l, bend left,"T"]\ar[r, phantom, "\perp"]\ar[r, bend right, "C"', shift right=6]\ar[r, phantom, "\perp", shift right=6] & \F \ar[l, bend left, "I", near start] \end{tikzcd} .\]
\end{defi}

 Clearly, in a TTF theory $(\mathcal{C}, \T, \F)$ the pair $(\mathcal{C}, \T)$ is a CTF-theory. In \cite{CDT06}, it is proved that a normal mono-coreflective subcategory closed under extensions is in fact a torsion category, so a CTF theory with $\F$ a normal-monocoreflective  subcategory is a TTF theory.
 
The subcategory of discrete crossed modules $Dis$ in $\X Mod$ behaves almost as a torsion torsion-free subcategory, it presents an example of a CTF theory as well as a relative $\mathcal{E}$-torsion theory. To this end, recall that a monomorphism in $\X Mod$ is given by an injective crossed module morphism $f=(f_1,f_0)$:
\[\begin{tikzcd} A\ar[r]\ar[d, "f_1"] & B\ar[d, "f_0"] \\ X\ar[r] & Y\, . \end{tikzcd}\]
In addition, $f$ is a normal subcrossed module if and only if $A$, $B$ are normal subgroups of $X$, $Y$, and the conditions  ${}^{y}(a)\in A$ and ${}^b(x)x^{-1}\in X$ hold for all $a \in A$, $b \in B$, $y\in Y$ and $x\in X$.

\begin{prop}\label{ctfxmod} In $\X Mod$ consider the triplet of subcategories
\[(CExt, Dis, Ab )\]
then:
\begin{enumerate}
\item the pair $(CExt ,Dis)$ is a CTF theory in $\X Mod$;
\item the pair $(Dis, Ab)$ is an $\mathcal{E}$-torsion theory where $\mathcal{E}$ is the class of crossed modules $\delta:A \to B$ where the action $B \to Aut(A)$ is trivial.
\end{enumerate}
\end{prop}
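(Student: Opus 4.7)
For part (1), the pair $(CExt, Dis)$ is already known to be a torsion theory in $\X Mod$, since it corresponds via Moore normalization to the torsion theory $\mu'_{\geq 1}$ in $\mu(\M_{1\geq})$; hence it remains to verify that $Dis$ is mono-coreflective in $\X Mod$. The plan is to define $C \colon \X Mod \to Dis$ by $C(\delta \colon A \to B) = (0 \to B)$. For any discrete crossed module $(0 \to G)$, a morphism into $(\delta \colon A \to B)$ consists only of a group homomorphism $G \to B$ (the component $0 \to A$ being forced and the equivariance condition being vacuous), which produces the adjunction $I \dashv C$ directly. The counit at $(\delta \colon A \to B)$ is the pair $(0 \hookrightarrow A,\, \mathrm{id}_B)$, whose components are both group monomorphisms, so it is a monomorphism in $\X Mod$. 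This shows that $Dis$ is mono-coreflective and establishes the CTF theory.

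For part (2), axiom TT1 is immediate: any morphism $(0 \to G) \to (A \to 0)$ has both its components forced to be the zero map. The content lies in axiom TT2' relative to $\mathcal{E}$. Given $X = (\delta \colon A \to B)$ in $\mathcal{E}$, my candidate short exact sequence is
\[
\begin{tikzcd}
0 \ar[r] & (0 \to B) \ar[r] & (A \to B) \ar[r] & (A \to 0) \ar[r] & 0.
\end{tikzcd}
\]
The crucial step is to verify that the inclusion $(0 \to B) \hookrightarrow (A \to B)$ is a normal sub-crossed module. Using the criterion recalled just before the proposition, normality with $A' = 0$ and $B' = B$ reduces to the single nontrivial condition ${}^b(a)\,a^{-1} \in A' = 0$ for all $b \in B$ and $a \in A$, that is, to triviality of the $B$-action on $A$; this is exactly the defining condition of $\mathcal{E}$. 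The cokernel is then computed componentwise as $(A \to 0)$, and the Peiffer identity together with triviality of the action yields $a' = {}^{\delta(a)}a' = aa'a^{-1}$, forcing $A$ to be abelian, so $(A \to 0) \in Ab$.

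The main obstacle, such as it is, amounts to the translation of the hypothesis ``$X \in \mathcal{E}$'' into the normality condition for the inclusion $(0 \to B) \hookrightarrow (A \to B)$; this equivalence is precisely what makes $\mathcal{E}$ the correct class. Conversely, when $X \notin \mathcal{E}$, the inclusion fails to be a normal monomorphism, which explains why $(Dis, Ab)$ is only an $\mathcal{E}$-torsion theory and not a genuine torsion theory in $\X Mod$.
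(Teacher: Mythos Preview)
Your proof is correct and follows essentially the same approach as the paper: for part (1) you exhibit the right adjoint $C(\delta)=(0\to B)$ with monic counit $(0,\mathrm{id}_B)$, and for part (2) you reduce normality of $(0\to B)\hookrightarrow(A\to B)$ to triviality of the action and then deduce abelianness of $A$ from the Peiffer identity, exactly as the paper does. One small slip: the torsion theory $(CExt,Dis)$ corresponds to $\mu'_{0\geq}$ in $\mu(\M_{1\geq})$, not to $\mu'_{\geq 1}$ (the latter is $(Ab,Norm)$).
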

\begin{proof} 1) The discrete functor $D$ has right adjoint $(\, )_0$ where the component of the counit for a crossed module $\delta: A \to B$ is given by horizontal arrows in the diagram:
\begin{equation}\label{counitdis0}
\begin{tikzcd} 0\ar[r,"0"]\ar[d] & A\ar[d,"\delta"] \\ B\ar[r, "1"] & B \end{tikzcd} 
\end{equation}
which is a monomorphism since the pair $(0,1)$ are injective morphisms. 

2) It is clear that the pair $(Dis, Ab)$ satisfies TT1 since in a commutative diagram
\[\begin{tikzcd} 0\ar[r, "f_1"]\ar[d] & A\ar[d] \\ G\ar[r, "f_0"]& 0 \end{tikzcd}\]
the morphism $f=(f_1,f_0)$ is zero. For TT2', recall that the unit Diagram (\ref{counitdis0}) is a normal monomorphism in $\X Mod$ if and only if $^b(a)a^{-1}=0$, i.e., the action of $B$ over $A$ is trivial. From the Peiffer identity $^{\delta(a)}(a')=aa'a^{-1}$, a crossed module with trivial action also has $A$ as an abelian group then we have the short exact sequence in $\X Mod$:
\[\begin{tikzcd} 0\ar[r]&0\ar[r]\ar[d]&A\ar[r]\ar[d, "\delta"]&A\ar[r]\ar[d]&0 \\ 0\ar[r]&B\ar[r]&B\ar[r] &0\ar[r]&0 \, . \end{tikzcd} \]   
\end{proof}

\begin{rmk} The pair of subcategories $Dis$ and $Ab$ of $\X Mod$ present another example of an $\mathcal{E}$-torsion theory in $\X Mod$. This time as $(Ab, Dis)$ and $\mathcal{E}$ as the subcategory $Mod$ of modules of groups. For a module we mean a pair $(A, G)$ such that $G$ is a group and $A$ is an abelian groups with a group action $G \to Aut(A)$. Then a module $(A,G)$ is a crossed modules as $\delta=0:A\to G$. In fact, the subcategory $Mod$ is a Birkhoff subcategory of $\X Mod$. The associated short exact sequence of the $\mathcal{E}
$-torsion theory for a module $(A, G)$ is
\[\begin{tikzcd} 0\ar[r]&A\ar[r]\ar[d]&A\ar[r]\ar[d, "\delta=0"]&0\ar[r]\ar[d]&0 \\ 0\ar[r]&0\ar[r]&G\ar[r] &G\ar[r]&0 \, . \end{tikzcd} \]   
\end{rmk}

The category of reduced crossed complexes present a similar example as Proposition \ref{ctfxmod}.

\begin{strc} In $Crs(Grp)$ for each $n\geq 0$  consider the full subcategory $Crs(Grp)_{\geq n}$ of reduced crossed complexes $M$ who are trivial in degrees below $n$:
\[M= \begin{tikzcd}\dots\ar[r] & M_{n+1} \ar[r]  & M_n \ar[r] & 0\ar[r] & 0\ar[r] & \dots\, . \end{tikzcd}  \]
For all $n>0$ the category $Crs(Grp)_{\geq n}$ is equivalent to the category $ch(Ab)_{\geq n}\cong ch(Ab)$ of chain complexes in abelian groups.

Thus, for $n \geq 2$ we have a functor $\mathbf{tr'}_n: Crs(Grp) \to Crs(Grp)_{\geq n}$ defined for a crossed complex $M$ by
\[\mathbf{tr'}_n (M)= \begin{tikzcd}\dots\ar[r] & M_{n+1} \ar[r]  & M_n \ar[r] & 0\ar[r] & \dots  \end{tikzcd}. \]
The natural  chain complex morphism $f:M \to \mathbf{tr'}_n(M)$:
\[ \begin{tikzcd}\dots\ar[r] & M_{n+1}\ar[d, "1"] \ar[r]  & M_n \ar[r]\ar[d, "1"] & M_{n-1}\ar[r]\ar[d, "0"] & \dots \ar[r] & M_1\ar[r]\ar[d, "0"] & M_0\ar[d, "0"] \\ \dots\ar[r] & M_{n+1} \ar[r]  & M_n \ar[r] & 0\ar[r] & \dots\ar[r] & 0\ar[r] & 0 \end{tikzcd} \]
is a morphism in $Crs(Grp)$ if and only if all the actions $M_0\to Aut(M_i)$ are trivial for $i \geq n$. Indeed, $M$ should satisfy $ ^{m_0}m_n= f_n(^{m_0}m_n)= ^{f_0(m_0)}f_n(m_n)=m_n$ for all $m_0 \in M_0$ and $m_n \in M_n$. In particular, this condition holds if $\delta_1: M_1 \to M_0$ is a central extension, since in a crossed complex the restrictions of the actions $\delta_1(M_1) \to Aut(M_0)$ are trivial.
\end{strc} 

\begin{prop}\label{teocrs} For $n \geq 2$, consider the triplet of subcategories:
\[ (Ker(\mathbf{cot}_{n-1}) , Crs(Grp)_{n-1 \geq}, Crs(Grp)_{ \geq n} )\, . \]
in $Crs(Grp)$. Then:
\begin{enumerate}
\item the pair $(Ker(\mathbf{cot}_{n-1}), Crs(Grp)_{n-1 \geq})$ is CTF theory, i.e., the subcategory  $Crs(Grp)_{n-1 \geq}$ is mono-coreflective;
\item the pair  $(Crs(Grp)_{n-1 \geq}, Crs(Grp)_{ \geq n})$ is an $\mathcal{E}$-torsion theory where $\mathcal{E}$ is the class of crossed complexes $M$ with all actions $M_0 \to Aut(M_i)$ trivial for $i \geq n$;
\item if $M$ is a crossed complex with $\delta_1:M_1 \to M_0$ a crossed module central extension then $M$ belongs to $\mathcal{E}$.
\end{enumerate}
In particular, for $n=2$ this holds for the triplet:
\[ (Ker(\mathbf{cot}_{1}) , \X Mod, chn(Ab)_{\geq 2})\, . \]
\end{prop}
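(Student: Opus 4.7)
The plan is to prove the three items in sequence, leveraging the adjunctions from \ref{adj} together with the identification of $\mu'_{n-1 \geq}$ and $\mu'_{\geq n}$ provided by Proposition \ref{ttcrs}. For part (1), the pair $(Ker(\mathbf{cot}_{n-1}), Crs(Grp)_{n-1 \geq})$ is already a torsion theory by \ref{ttcrs}, so only the mono-coreflective property of $Crs(Grp)_{n-1 \geq}$ needs attention. The adjunction $\mathbf{sk}_{n-1} \dashv \mathbf{tr}_{n-1}$ exhibits $\mathbf{tr}_{n-1}$ as the right adjoint of the inclusion. The counit $\mathbf{Sk}_{n-1}(M) \to M$ is, in degree $i$, either the identity (when $i < n$) or the inclusion $0 \hookrightarrow M_i$ (when $i \geq n$), both injective; since monomorphisms in $Crs(Grp)$ are detected componentwise (as in any category of groups with operations), the counit is a monomorphism.

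For part (2), axiom TT1 is immediate, since for $T$ in $Crs(Grp)_{n-1 \geq}$ and $F$ in $Crs(Grp)_{\geq n}$, any morphism $T \to F$ is zero in every degree because the domain or codomain vanishes there. For TT2', given $M \in \mathcal{E}$, I would propose the candidate short exact sequence
\[
0 \to \mathbf{Sk}_{n-1}(M) \xrightarrow{\iota} M \xrightarrow{\pi} \mathbf{sk}'_n\mathbf{tr}'_n(M) \to 0,
\]
where $\iota$ is the coreflection from part (1) and $\pi$ is the identity in degrees $\geq n$ and zero in degrees $< n$. The crucial point is that $\pi$ is a morphism in $Crs(Grp)$: since the codomain has $F_0 = 0$, equivariance forces $\pi_i({}^{m_0}m_i) = {}^{\pi_0(m_0)}\pi_i(m_i) = {}^{0}m_i = m_i$ for each $i \geq n$, which is precisely the defining condition ${}^{m_0}m_i = m_i$ for $M \in \mathcal{E}$. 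Exactness is then degreewise: $\iota$ identifies $\mathbf{Sk}_{n-1}(M)$ with $\ker(\pi)$, and $\pi$ is a normal epimorphism, being componentwise surjective.

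For part (3), assume $\delta_1: M_1 \to M_0$ is a central extension. In particular $\delta_1$ is surjective, so $\delta_1(M_1) = M_0$, and axiom 2 in the definition of a reduced crossed complex says the restriction of the $M_0$-action to $\delta_1(M_1)$ is trivial on each $M_i$ for $i \geq 2$. Hence $M_0$ itself acts trivially on $M_i$ for all $i \geq 2$, and since $n \geq 2$ this forces triviality for all $i \geq n$, so $M \in \mathcal{E}$. The main obstacle I expect is the exactness verification in part (2): while the components look trivial, one must confirm that the categorical kernel and cokernel computed in the semi-abelian category $Crs(Grp)$ (with its action constraints) coincide with the naive componentwise ones. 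This should follow from the fact that $Crs(Grp)$ is a Birkhoff subcategory of $Simp(Grp)$ and that the Moore normalization is conservative, which reduces the exactness question to its componentwise group-theoretic counterpart.
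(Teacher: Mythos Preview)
Your proposal is correct and follows essentially the same route as the paper: part (1) via Proposition \ref{ttcrs} and the componentwise-monic counit of $\mathbf{sk}_{n-1}\dashv\mathbf{tr}_{n-1}$, part (2) via the sequence $\mathbf{Sk}_{n-1}(M)\to M\to \mathbf{tr}'_n(M)$ with the action-triviality hypothesis ensuring the second map lives in $Crs(Grp)$, and part (3) via surjectivity of $\delta_1$ combined with axiom 2 of a reduced crossed complex. The only cosmetic difference is in how you justify exactness in part (2): the paper invokes conservativity of the forgetful functor $Crs(Grp)\to chn(Grp)$ directly, whereas you route through $Simp(Grp)$ and Moore normalization; both arguments amount to the same componentwise check.
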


\begin{proof} 1) From Proposition \ref{ttcrs}, $\mu'_{n-1 \geq}=(Ker(\mathbf{cot}_{n-1}) ,Crs(Grp)_{n-1 \geq})$ is a torsion theory. It suffices to notice that the counit of $\mathbf{sk}_{n-1} \dashv \mathbf{tr}_{n-1}$ given by
\[\begin{tikzcd}  \dots\ar[r] &  0\ar[r]\ar[d] & 0\ar[r]\ar[d] & M_{n-1}\ar[r]\ar[d] & M_{n-2}\ar[r] \ar[d] & \dots \\ \dots\ar[r] &M_{n+1}\ar[r] & M_n\ar[r] & M_{n-1}\ar[r] & M_{n-2}\ar[r] & \dots  \end{tikzcd} \]
is monic since each component is an injective morphism.

2) It is clear that the pair $(Crs(Grp)_{n-1 \geq}, Crs(Grp)_{ \geq n})$ satisfies TT1 of the definition of a $\mathcal{E}$-torsion theory. Now, let $M$ be a crossed complex with trivial actions $M_0 \to Aut(M_i)$ and consider the morphisms $\mathbf{tr}_{n-1}(M) \to M \to \mathbf{tr'}_n(M)$ in $Crs(Grp)$:
\[\begin{tikzcd}  
\dots\ar[r] & 0\ar[r]\ar[d] & 0\ar[r]\ar[d] & M_{n-1}\ar[r]\ar[d] & M_{n-2}\ar[r] \ar[d] & \dots 
\\ \dots\ar[r] & M_{n+1}\ar[d] \ar[r] &  M_n\ar[r]\ar[d] & M_{n-1}\ar[r]\ar[d] & M_{n-2}\ar[r]\ar[d] & \dots 
\\ \dots\ar[r] & M_{n+1} \ar[r] & M_n\ar[r] & 0\ar[r] & 0\ar[r] & \dots
 \end{tikzcd} \]
recall that the morphism $M \to \mathbf{tr'}_n(M)$ is indeed a morphism in $Crs(Grp)$ since the actions are trivial. It is a short exact sequence in $Crs(Grp)$ since it is a short exact sequence as chain complexes and the forgetful functor is conservative. 

3) It follows from the definition of crossed complex that if $\delta_1$ is surjective the actions $\delta_1(M_1)=M_0\to Aut(M_i)$ are trivial.
\end{proof}

\subsection{An example of a splitting CTF theory in a semi-abelian category}

In an abelian category a torsion theory $(\T, \F)$ is called splitting if the torsion subobject $t(X)$ of $X$ is a direct summand. In a semi-abelian category  we will call a torsion theory $(\T, \F)$ \textit{splitting} if for every object $X$ the associated exact sequence splits:
\[\begin{tikzcd}  0\ar[r] & t(X)\ar[r] & X\ar[r, shift right] & X/t(X)\ar[r]\ar[l, shift right]&0 \end{tikzcd}. \] 

In $RMod$ the category of modules over the ring $R$, a central idempotent element of $R$ induces a splitting torsion theory $(\T, \F)$ (also called centrally splitting), and even yields a TTF theory $(\F, \T, \F)$. Connections of splitting torsion theories and TTF theories are studied in \cite{jans65}.

The torsion theories  in simplicial groups and crossed complexes presented earlier are not splitting. However, we give an example of a splitting torsion theory in a semi-abelian category.

\begin{exmp}\label{exsplit} Let $KHopf_{coc}$ the category of cocommutative Hopf alegras over the field $K$ of characteristic 0. In \cite{GKV16}, the category $KHopf_{coc}$ is proved that it is a semi-abelian category and have a torsion theory $(KLie, Grp)$ where the $K$-Lie algebras are consider as the primitive Hopf $K$-algebras and $Grp$ is considered as the category of group Hopf $K$-algebras. Indeed, for every $K$-algebra $H$ the associated short exact sequence is given by the Cartier-Gabriel-Milnor-Moore-Kostant theorem:  there is a split short exact sequence
\[\begin{tikzcd}  0\ar[r] & U(L_H) \ar[r] & H \ar[r, shift right] & K[G_H]\ar[l, shift right] \ar[r] &0 \end{tikzcd} \]
where $K[G_H]$ is the group algebra of the group-like elements $G_H$ of $H$ and $U(L_H)$ is the enveloping algebra of the primitive elements $L_H$ of $H$.

In \cite{GKV18}, it is proved that the funtor $\mathcal{G}:KHopf_{coc}\to Grp$ that takes the group-like elements and $K[\_]: Grp \to KHopf_{coc}$ yield the adjunctions $\mathcal{G}\dashv K[\_]$ and $K[\_]\dashv \mathcal{G}$. As a consequence, $(KLie, Grp)$ is a splitting CTF theory.
\end{exmp}   

\section*{Acknowledgements}
This work is part of the author's Ph.D. thesis \cite{Lop22}. The author would like to thank his advisor Marino Gran for his guidance and suggestions, in particular for mentioning the example \ref{exsplit}.

{\footnotesize INSTITUT DE RECHERCHE EN MATH\'EMATIQUE ET PHYSIQUE,UNIVERSIT\'E CATHOLIQUE DE LOUVAIN, CHEMIN DU CYCLOTRON 2, 1348 LOUVAIN-LA-NUEVE, BELGIUM
\newline \textit{E-mail address}: guillermo.lopez@uclouvain.be, glopezcafaggi@gmail.com}

\end{document}